\numberwithin{equation}{section}
\numberwithin{figure}{section}
\theoremstyle{plain}
\theoremstyle{plain}
\theoremstyle{plain}
\theoremstyle{plain}
\theoremstyle{definition}
\theoremstyle{remark}
\theoremstyle{remark}
\newtheorem*{rem*}{\protect\remarkname}
\newlist{casenv}{enumerate}{4}
\setlist[casenv]{leftmargin=*,align=left,widest={iiii}}
\setlist[casenv,1]{label={{\itshape\ \casename} \arabic*.},ref=\arabic*}
\setlist[casenv,2]{label={{\itshape\ \casename} \roman*.},ref=\roman*}
\setlist[casenv,3]{label={{\itshape\ \casename\ \alph*.}},ref=\alph*}
\setlist[casenv,4]{label={{\itshape\ \casename} \arabic*.},ref=\arabic*}
\theoremstyle{plain}
\newtheorem{theorem}{Theorem}[section]
\newtheorem{lemma}[theorem]{Lemma}
\newtheorem{proposition}[theorem]{Proposition}
\newtheorem{definition}[theorem]{Definition}
\newtheorem{corollary}[theorem]{Corollary}
\newtheorem{example}[theorem]{Example}
\newtheorem{remark}[theorem]{Remark}
\newtheorem*{recalledthm}{Theorem}
\newcommand{\R}{{\mathbb{R}}}
\newcommand{\C}{{\mathbb{C}}}
\newcommand{\F}{\ensuremath{\mathbb{F}}}
\newcommand{\T}{\mathbb{T}}
\newcommand{\Acal}{\mathcal{A}}
\newcommand{\Bcal}{\mathcal{B}}
\newcommand{\Hcal}{\mathcal{H}}
\newcommand{\Ocal}{\mathcal{O}}
\newcommand{\Afrak}{\mathfrak{A}}
\newcommand{\Bfrak}{\mathfrak{B}}
\newcommand{\Cfrak}{\mathfrak{C}}
\newcommand{\Hfrak}{\mathfrak{H}}
\newcommand{\ff}{F} 
\newcommand{\gf}{G}
\newcommand{\hf}{H}
\DeclareMathOperator{\Cheeger}{\Cfrak}
\newcommand{\dmu}{\,\mathrm{d}\mu}
\newcommand{\dnu}{\,\mathrm{d}\nu}
\newcommand{\dx}{\,\mathrm{d}x}
\newcommand{\dy}{\,\mathrm{d}y}
\newcommand{\dxi}{\,\mathrm{d}\xi}
\let\emptyset\varnothing
\DeclareMathOperator{\ran}{ran}
\newcommand{\norm}[1]{\ensuremath{\left\lVert\text{}#1\text{}\right\rVert}}
\newcommand{\ip}[1]{\ensuremath{\left<\text{} \, #1 \, \text{}\right>}}
\DeclareMathOperator{\re}{Re}
\DeclareMathOperator{\id}{id}
\DeclareMathOperator{\GL}{GL}
\DeclareMathOperator{\supp}{supp}
\DeclareMathOperator{\const}{const}
\DeclareMathOperator*{\inter}{int}
\newcommand{\Lcal}{\mathcal{L}}
\newcommand{\Scal}{\mathcal{S}}
\providecommand{\assumptionname}{Assumption}
\providecommand{\corollaryname}{Corollary}
\providecommand{\definitionname}{Definition}
\providecommand{\lemmaname}{Lemma}
\providecommand{\remarkname}{Remark}
\providecommand{\casename}{Case}
\providecommand{\theoremname}{Theorem}
\providecommand{\propositionname}{Proposition}
\providecommand{\assumptionname}{Assumption}
\providecommand{\corollaryname}{Corollary}
\providecommand{\definitionname}{Definition}
\providecommand{\lemmaname}{Lemma}
\providecommand{\remarkname}{Remark}
\providecommand{\casename}{Case}
\providecommand{\theoremname}{Theorem}
\begin{document}
	
	\title[Cheeger Bounds for Stable Phase Retrieval in RKHSs]{Cheeger Bounds for Stable Phase Retrieval in Reproducing Kernel Hilbert Spaces}

	\author{Hartmut F\"{u}hr, Max Getter}
	
	\address{Hartmut F\"{u}hr \\
		Chair for Geometry and Analysis \\
		RWTH Aachen University \\
		D-52062 Aachen \\
		Germany}
	\email{fuehr@mathga.rwth-aachen.de}
	\address{Max Getter \\
		Chair for Geometry and Analysis \\
		RWTH Aachen University \\
		D-52062 Aachen \\
		Germany}
	\email{getter@mathga.rwth-aachen.de}
	\begin{abstract}
		Phase retrieval seeks to reconstruct a signal from phaseless intensity measurements and, in applications where measurements contain errors, demands stable reconstruction. We study local stability of phase retrieval in reproducing kernel Hilbert spaces. Motivated by Grohs-Rathmair's Cheeger-type estimate for Gabor phase retrieval, we introduce a \emph{kernel Cheeger constant} that quantifies connectedness relative to kernel localization. This notion yields a clean stability certificate: we establish a unified lower bound over both real and complex fields, and in the real case also an upper bound, each in terms of the reciprocal kernel Cheeger constant. Our framework treats finite- and infinite-dimensional settings uniformly and covers discrete, semi-discrete, and continuous measurement domains. For generalized wavelet phase retrieval from (semi-)discrete frames, we bound the kernel Cheeger constant by the Cheeger constant of a data-dependent weighted graph. We further characterize phase retrievability for generalized wavelet transforms and derive a simple sufficient criterion for wavelet sign retrieval in arbitrary dimension for transforms associated with irreducibly admissible matrix groups.
	\end{abstract}
	\maketitle
	\global\long\def\with{\,\middle|\,}

	\hyphenation{an-iso-tropic}
	\noindent \textbf{\small Keywords:}{\small {}  Phase Retrieval; Reproducing Kernel Hilbert Spaces; Cheeger Inequality; Wavelet Transform}{\small \par}
	
	\noindent \textbf{\small 2020 Mathematics Subject Classification:}{\small {} 42C15;
		42C40; 46E22; 94A12}{\small \par}

	 \section{Introduction}\label{sec: intro}
	 In physics and engineering, one often encounters the situation that, when measuring an unknown quantity, an essential part of the information one aims to determine is lost. Certain measurement devices can record only the intensity of the (possibly device-transformed) incoming signal. As a consequence, the measurement process entails an unavoidable loss of information about the phase of the original signal. The term \emph{phase retrieval} is commonly used to describe the problem of reconstructing the missing phase information of the input signal. Prominent instances of this problem arise in quantum mechanics, speech recognition, radar, X-ray crystallography, and coherent diffraction imaging \cite{pauli1990allgemeinen,rabiner1993fundamentals,jaming1999phase,miao1999extending}.
	 
	 The diversity of practical applications is reflected in the variety of corresponding mathematical models. From an abstract point of view phase retrieval is the reconstruction of a vector $f$ from its phaseless measurements $|Tf|$, where $T$ is a linear transformation into a function space. In its broadest generality \cite{freeman2024stable,garcia2025existence}, this perspective leads to the notion of a Banach lattice as the natural ambient space for $Tf$ (and hence $|Tf|$). 
	 
	 The modulus nonlinearity of the measurement process introduces natural ambiguities: any unimodular scalar $\alpha$ yields the same phaseless measurements via $|T(\alpha f)|=|Tf|$. Hence, the linear transform $T:\Bcal \to \Bfrak$ over \(\F\), assuming that $T$ is a map between a Banach space $\Bcal$ and a Banach lattice $\Bfrak$ (thinking of an $L^p$-space as a prototype) over the field \(\F\), is said to do phase retrieval if whenever $f,g\in \Bcal$ satisfy $|Tf|=|Tg|$ then $f=\alpha g$ for some unimodular scalar $\alpha\in \F$.
	 
	 We study phase retrieval over both scalar fields \(\F=\R\) and \(\F=\C\). In the real case \(\F=\R\) (so that \(\Bcal\) and \(\Bfrak\) are real spaces) we also speak of \emph{sign retrieval}. Unless explicitly stated otherwise, all results apply to both fields; when a statement depends on the choice of \(\F\), we indicate this in the assumptions.
	 
	 In applications of the phase retrieval problem, the measurements are inherently contaminated by errors. In order for a reconstruction obtained from phase retrieval to be meaningful in practice, it is therefore desirable that the reconstruction be stable as well. The linear transformation $T$ is said to do \emph{stable phase retrieval} if the recovery of signals is Lipschitz: there exists a constant $C>0$ such that for all $f,g\in \Bcal$,
	 \begin{align}\label{def: stable PR for T}
	 	\inf_{|\alpha|=1} \norm{f-\alpha g}_\Bcal \leq C \norm{|Tf|-|Tg|}_\Bfrak.
	 \end{align}
	 The left-hand side of \eqref{def: stable PR for T} is the distance between $f$ and $g$ when measured by the natural quotient metric on $\Bcal /\!\sim$, where $\sim$ is the equivalence relation that filters all trivial ambiguities, i.e., $f\sim g$ if there exists a unimodular scalar $\alpha\in \T:=\{z\in \F:|z|=1\}$ such that $f=\alpha g$. 
	 
	 If $\Bcal$ is finite-dimensional, then $T$ is known to do phase retrieval if and only if it does stable phase retrieval \cite{balan2013stability,bandeira2014saving,balan2015invertibility,cahill2016PR,balan2016lipschitz}. The situation is fundamentally different if $\Bcal$ is infinite-dimensional. Essentially, if $T$ is the analysis operator of a frame or a continuous Banach frame, then $T$ cannot do stable phase retrieval \cite{cahill2016PR,alaifari2017PRcont}. We give a precise formulation of this statement in the context of our setting, see the theorem of Alaifari--Grohs below. Moreover, when one passes to finite-dimensional approximations of the phase retrieval problem, stability may deteriorate severely as the dimension increases \cite{cahill2016PR,alaifari2021gabor}.
	 
	 The linear transformation $T$ is often isometric. In this case, \eqref{def: stable PR for T} has an equivalent reformulation via phase retrieval in the image space $T(\Bcal)\subset \Bfrak$, namely that for all $F,G\in T(\Bcal)$,
	 \begin{align}\label{def: stable PR for T in image space}
	 	\inf_{\alpha\in \T} \norm{\ff-\alpha \gf}_\Bfrak \leq C \norm{|\ff|-|\gf|}_\Bfrak.
	 \end{align}
	 The choice of the scalar field $\F$ plays a critical role for the
	 phase retrieval problem. If $\F=\R$, then there are only two phases,
	 so that the left-hand side of \eqref{def: stable PR for T in image space} reduces to 
	 \[\norm{\ff-\gf}_\Bcal \wedge \norm{\ff+\gf}_\Bcal,\]
	 where we use the notation $a \wedge b:=\min\{a,b\}$ for any $a,b\in\R$. 
	 If $\F=\C$, by contrast, the infimum in \eqref{def: stable PR for T in image space} is taken over all unimodular scalars $\alpha\in \C$ with $|\alpha|=1$, i.e. over the full unit circle in $\C$. In particular, the infimum ranges over an infinite set, but it is attained (and hence may be written as a minimum).
	 
	 We study the phase retrieval problem in \emph{reproducing kernel Hilbert spaces} (RKHS) \cite{paulsen2016introduction}. More specifically\footnote{We refer the reader to Section~\ref{sec: cheeger RKHS} for more details.}, we assume that $\Hcal\subset \F^X$ is an RKHS subspace of $L^2(X):=L^2(X,\Sigma,\mu)$, for some measure space $(X,\Sigma,\mu)$. Now, $\Hcal$ is said to do phase retrieval if whenever $\ff,\gf\in \Hcal$ satisfy $|\ff|=|\gf|$ then $\ff=\alpha \gf$ for some unimodular scalar $\alpha\in \T$. Equivalently, $\Hcal$ does phase retrieval if and only if the nonlinear mapping
	 \[\Acal: \Hcal /\!\sim \:\to L^2(X),\quad [\ff]_\sim \mapsto |\ff|\]
	 is injective. 
	 The following theorem records the main consequences of \cite{alaifari2017PRcont} in our setting.
	 \begin{recalledthm}[{Alaifari--Grohs \cite{alaifari2017PRcont}}]\label{rethm: PR cont unstable}
	 	Let $X$ be a $\sigma$-compact topological space. Suppose that $\Hcal$ is an RKHS subspace of $L^2(X)$ and assume that its reproducing kernel is continuous.
	 	Then the following hold:
	 	\begin{itemize}
	 		\item If $\Acal$ is injective, then $\Acal^{-1}$ is continuous on $\ran(\Acal)$.
	 		\item If $\Hcal$ is infinite-dimensional, then phase retrieval in $\Hcal$ is unstable: for every $\varepsilon>0$, there exist $\ff,\gf\in \Hcal$ with 
	 		\begin{align*}
	 			\norm{|\ff|-|\gf|}_2<\varepsilon \quad \text{but} \quad \inf_{\alpha \in \T} \norm{\ff-\alpha \gf}_2\geq 1.
	 		\end{align*}
	 	\end{itemize}
	 \end{recalledthm}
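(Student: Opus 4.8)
The plan is to handle the two assertions separately, exploiting throughout that the reproducing property converts weak convergence in $\Hcal$ into pointwise convergence: since $K_x\in\Hcal$ for every $x\in X$, any sequence with $F_n\rightharpoonup F_*$ weakly in $\Hcal$ satisfies $F_n(x)=\ip{F_n,K_x}\to\ip{F_*,K_x}=F_*(x)$. For the continuity of $\Acal^{-1}$ I would argue by sequential continuity. Suppose $\abs{F_n}\to\abs{F}$ in $L^2(X)$ with $F_n,F\in\Hcal$. Then $\norm{F_n}_2=\norm{\abs{F_n}}_2\to\norm{F}_2$, so $(F_n)$ is bounded in the Hilbert space $\Hcal$ and, along a subsequence, $F_n\rightharpoonup F_*$. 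The reproducing property gives $F_n\to F_*$ pointwise, hence $\abs{F_n}\to\abs{F_*}$ pointwise, while $L^2$-convergence yields $\abs{F_n}\to\abs{F}$ almost everywhere along a further subsequence; therefore $\abs{F_*}=\abs{F}$ a.e. Injectivity of $\Acal$ forces $F_*=\alpha F$ for some $\alpha\in\T$, and since $\norm{F_n}_2\to\norm{F}_2=\norm{\alpha F}_2$, weak convergence upgrades to norm convergence $F_n\to\alpha F$ in $L^2$. Thus $\inf_{\beta\in\T}\norm{F_n-\beta F}_2\le\norm{F_n-\alpha F}_2\to0$ along the subsequence; as every subsequence admits a further subsequence along which the quotient class converges to $[F]$ (the phase $\alpha$ being irrelevant at the level of classes), the whole sequence converges and $\Acal^{-1}$ is continuous.

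For the instability statement I would construct an explicit pair by forcing two normalized bumps to become almost disjointly supported. Since $\Hcal$ is infinite-dimensional, fix an orthonormal sequence $(u_n)\subset\Hcal$; then $u_n\rightharpoonup0$. Continuity of the kernel makes $x\mapsto K_x$ an $\Hcal$-continuous map, so $\set{K_x:x\in K}$ is compact in $\Hcal$ for every compact $K\subseteq X$; as a bounded weakly null sequence converges uniformly on compact subsets of $\Hcal$, we obtain $\sup_{x\in K}\abs{u_n(x)}=\sup_{x\in K}\abs{\ip{u_n,K_x}}\to0$, and with $\mu$ finite on compacta this gives $\norm{u_n\mathbf{1}_K}_2\to0$ (equivalently, the restriction $\Hcal\to L^2(K)$ is Hilbert--Schmidt, hence compact). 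Now fix $v\in\Hcal$ with $\norm{v}_2=1$; using $\sigma$-compactness, choose a compact $K$ with $\norm{v\mathbf{1}_{X\setminus K}}_2$ arbitrarily small, then choose $n$ so large that both $\norm{u_n\mathbf{1}_K}_2$ and $\abs{\ip{v,u_n}}$ are small, and set $F:=v+u_n$ and $G:=v-u_n$.

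The verification rests on two elementary estimates. The pointwise inequality $\big|\,\abs{F}-\abs{G}\,\big|=\big|\,\abs{v+u_n}-\abs{v-u_n}\,\big|\le2\min\set{\abs{v},\abs{u_n}}$ localizes the discrepancy, being bounded by $\abs{u_n}$ on $K$ and by $\abs{v}$ on $X\setminus K$, so that $\norm{\abs{F}-\abs{G}}_2\le2\big(\norm{u_n\mathbf{1}_K}_2^2+\norm{v\mathbf{1}_{X\setminus K}}_2^2\big)^{1/2}<\varepsilon$. For the separation, expand $F-\alpha G=(1-\alpha)v+(1+\alpha)u_n$; using $\norm{v}_2=\norm{u_n}_2=1$, the parallelogram identity $\abs{1-\alpha}^2+\abs{1+\alpha}^2=4$, and $\abs{1-\alpha}\abs{1+\alpha}=\abs{1-\alpha^2}\le2$, one gets $\norm{F-\alpha G}_2^2=\abs{1-\alpha}^2+\abs{1+\alpha}^2+2\re\big[(1-\alpha)\overline{(1+\alpha)}\,\ip{v,u_n}\big]\ge4-4\,\abs{\ip{v,u_n}}$, which exceeds $1$ uniformly in $\alpha\in\T$ once $\abs{\ip{v,u_n}}$ is small (in the real case $\F=\R$ the cross term vanishes at $\alpha=\pm1$, giving $\inf\nolimits_{\alpha}\norm{F-\alpha G}_2=2$ directly). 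The main obstacle is the middle step: producing a normalized sequence whose mass genuinely escapes every compact set and quantifying that escape in $L^2$, which is exactly where infinite-dimensionality, continuity of the reproducing kernel, and $\sigma$-compactness (with local finiteness of $\mu$) are all needed; once the near-disjointness of the supports of $v$ and $u_n$ is secured, the two displayed estimates close the argument.
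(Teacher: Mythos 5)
The paper itself contains no proof of this statement: it is a \emph{recalled} theorem, cited from Alaifari--Grohs \cite{alaifari2017PRcont} and stated as a summary of that paper's consequences in the present setting, so there is no internal argument to compare yours against. Judged on its own merits, your proof is correct and essentially reconstructs the standard argument of the cited reference. For the first bullet, the chain (boundedness from $\norm{|F_n|}_2\to\norm{|F|}_2$, weak sequential compactness in $\Hcal$, the reproducing property converting weak into pointwise convergence, identification $|F_*|=|F|$ $\mu$-a.e.\ via an a.e.-convergent subsequence, injectivity of $\Acal$, the Radon--Riesz upgrade from weak-plus-norm to strong convergence, and the subsequence-of-subsequence principle) is sound; note that it works because the isometric embedding makes the $\Hcal$-norm and $L^2$-norm coincide. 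For the second bullet, the construction $F=v+u_n$, $G=v-u_n$ with an orthonormal (hence weakly null) sequence, the continuity of $x\mapsto K_x$ giving norm-compactness of $\set{K_x: x\in K}$ and thus $\sup_{x\in K}|u_n(x)|\to 0$, the localization of $v$ by $\sigma$-compactness, the pointwise bound $\bigl||F|-|G|\bigr|\le 2\min\set{|v|,|u_n|}$, and the parallelogram estimate $\norm{F-\alpha G}_2^2\ge 4-4|\ip{v,u_n}|$ uniform in $\alpha\in\T$ are all correct. The one caveat---which you flag yourself---is the step from $\sup_{x\in K}|u_n(x)|\to 0$ to $\norm{u_n\mathds{1}_K}_2\to 0$: this needs $\mu(K)<\infty$ for compact $K$ (equivalently $\int_K k(x,x)\dmu(x)<\infty$ in your Hilbert--Schmidt reformulation). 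Local finiteness of $\mu$ is not literally among the hypotheses as the theorem is restated here (Section~\ref{sec: cheeger RKHS} only assumes $\sigma$-finiteness), but it is part of the setting of \cite{alaifari2017PRcont} and of the paper's motivating framework (Radon measures on locally compact spaces, cf.\ Example~\ref{ex: coherent state systems vs RKHS}), so your argument establishes the theorem in its intended scope.
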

	 The theorem states that there is no hope for global Lipschitz-stability 
	 of the phase retrieval problem if $\Hcal$ is infinite-dimensional. This motivates the study of \emph{locally} stable recovery. A signal $\ff\in \Hcal$ is said to be \emph{phase retrievable} if whenever $\gf\in \Hcal$ satisfies $|\ff|=|\gf|$ then $\ff=\alpha \gf$ for some $\alpha\in \T$. We say that $\ff$ does \emph{stable phase retrieval} if there exists a constant $C>0$ such that for all $\gf\in \Hcal$,
	 \begin{align}\label{def: locally stable PR}
	 	\inf_{\alpha \in \T} \norm{\ff-\alpha \gf}_2 \leq C \norm{|\ff|-|\gf|}_2.
	 \end{align}
	 We denote the smallest possible constant $C$ that satisfies \eqref{def: locally stable PR} by $C(\ff)\in [1,\infty]$ and call it the \emph{local stability constant} of $\ff$. 
	 We emphasize that continuity of $\Acal^{-1}$ does not, by itself, imply local Lipschitz continuity. Likewise, phase retrievability of $\ff$ alone does not guarantee (locally) stable phase retrieval.
	 
	 A large class of applications covered by our framework arises from so-called admissible coherent state systems, as we now explain. Further details, background, and additional applications can be found in \cite{fuehr2005abstract}.
	 \begin{example}[Coherent state systems]\label{ex: coherent state systems vs RKHS}
	 	Let $\Hfrak$ be a Hilbert space, and let $\eta:=(\eta_x)_{x \in X}$ be a family of vectors in $\Hfrak$ indexed by $x \in X$. Here, $X$ is a $\sigma$-compact locally compact Hausdorff space equipped with a Radon-measure $\mu$. Assume that $(\eta_x)_{x \in X}$ is a continuous Parseval frame for $\Hfrak$. That is, for every $f\in \Hfrak$ the coefficient map 
	 	\[X \to \C, \quad x \mapsto \langle f,\eta_x \rangle\] 
	 	is measurable and satisfies
	 	\[
	 	\|f\|^2 = \int_X |\langle f, \eta_x \rangle|^2 \dmu(x).
	 	\] 
	 	In other words, the so-called \emph{voice transform} $V_\eta: \Hfrak \to L^2(X)$ is a well-defined isometry, where 
	 	\[V_\eta f(x) := \langle f, \eta_x \rangle.\] 
	 	The image space 
	 	$\Hcal_\eta:=V_\eta(\Hfrak)\subset L^2(X)$ then turns out to be an RKHS with associated orthogonal projection $V_\eta V_\eta^\ast$ and reproducing kernel $k_\eta$, for $(x,y)\in X\times X$ given by 
	 	\[k_\eta(x,y) = \langle \eta_y,\eta_x \rangle.\] 
	 	
	 	We call $\eta$ an \emph{admissible coherent state system}. Such systems often arise as orbits of a vector under strongly continuous unitary representations of locally compact groups. Prominent examples include the \emph{windowed Fourier transform} and continuous as well as (semi-)discrete generalized \emph{wavelet transforms}.
	 \end{example}
	 
	 The literature on applications of phase retrieval is extensive. We refer to the survey \cite{grohs2020PR} for a thorough introduction and a comprehensive overview of the field up to its time of writing. Since we devote special attention to (semi)-discrete generalized wavelet transforms (see Section~\ref{sec: Cheeger const semi-discrete}), we include one representative application to motivate the subsequent discussion.
	 
	 \begin{example}[Mallat's scattering transform]
	 	Scattering networks---introduced by Mallat in the context of image and audio processing---form a class of structured convolutional neural networks with fixed, predefined filters; see \cite{mallat2012group,bruna2013invariant}. Their purpose is to capture and preserve key structural features of the input signal while suppressing non-informative variability. The basic building block is a cascade of convolutions with wavelet (high-pass) filters interleaved with a pointwise modulus nonlinearity.  
	 	
	 	Let \((\psi_\lambda)_{\lambda\in\Lambda}\subset L^{1}(\R^{d})\cap L^{2}(\R^{d})\) be a countable family of wavelet filters and let \(\chi\in L^{1}(\R^{d})\cap L^{2}(\R^{d})\) be a low-pass filter. For \(\lambda\in\Lambda\) define \(U[\lambda]:L^{2}(\R^{d})\to L^{2}(\R^{d})\) by
	 	\[
	 	U[\lambda]f := |f*\psi_\lambda|.
	 	\]
	 	For a finite path \(p=(\lambda_{1},\dots,\lambda_{n})\in\Lambda^{n}\) set
	 	\[
	 	U[p]f := \bigl|\cdots\,|\,|f*\psi_{\lambda_{1}}|*\psi_{\lambda_{2}}|\cdots *\psi_{\lambda_{n}}\bigr|.
	 	\]
	 	Under mild assumptions on the filter bank, the associated \emph{windowed scattering transform}
	 	\(\Scal:L^{2}(\R^{d})\to \ell^{2}(L^{2}(\R^{d}))\) is given by
	 	\[
	 	\Scal f := \{f*\chi\}\ \cup\ \{U[p]f*\chi : p\in \textstyle{\bigcup}_{n\geq 1}\Lambda^{n}\}.
	 	\]
	 	One also considers \emph{non-windowed} variants of the scattering transform in which one retains (some of) the unaveraged scattering coefficients \(U[p]f\) (or replaces the final averaging by a different pooling); see, e.g., \cite{hirn2017wavelet}. From a mathematical perspective, questions of invertibility are of independent interest, as they shed light on the mapping properties of scattering-type operators~\cite{mallat2016understanding,waldspurger2017wavelet}. Consequently, invertibility and stability questions for scattering-type representations are closely tied to phase retrieval problems for measurement families of the form \((|f*\psi_\lambda|)_{\lambda\in\Lambda}\). 
	 	
	 	Numerical inversion of scattering transforms has been reported, for instance, in audio processing \cite{anden2014deep} and on natural image data sets \cite{angles2018generative}.
	 	For Cauchy wavelets, injectivity and weak stability of the associated phase retrieval problem can be proved using tools from complex analysis \cite{mallat2015phase}. In the real-valued case, sign retrieval is possible under suitable bandlimitation assumptions on the filters, together with (weak) continuity of the reconstruction map \cite{alaifari2017reconstructing}. 
	 \end{example}
	 
	 The measurements contain only phaseless information about the unknown signal \(F\). It is therefore desirable to have a local stability certificate that depends only on \(|F|\). Informally, one would like to infer from the modulus \(|F|\) how well \(F\) can be recovered from phaseless data. Ideally, any comparison of the stability constant $C(F)$ 
	 should be expressible in terms of interpretable properties of \(|F|\). 
	 A central notion in this direction is that of a Cheeger-type constant, introduced into the phase retrieval literature in~\cite{grohs2019spectral,grohs2021stable} and developed further in~\cite{grohs2021L2stability,cheng2021stable,rathmair2024stable,bartusel2024role}, as well as in the recent preprint~\cite{alaifari2025cheeger}. 
	 
	 Cheeger constants provide quantitative measures of connectedness (or, more precisely, of the presence of bottlenecks) for geometric and combinatorial objects, such as Riemannian manifolds \cite{cheeger2015lower} and graphs \cite{chung1996laplacians}. In phase retrieval it is a folklore that instabilities are mainly caused by a lack of connectivity in the measurement data; see, e.g., \cite{alaifari2019stable,alaifari2021gabor}. 
	 
	 Our results differ from the existing literature in several aspects:
	 \begin{itemize}
	 	\item In \cite{cheng2021stable} the analysis starts from a setting in which  phase retrieval is already (locally) stable with respect to \emph{local measurements}. Global stability is then obtained by piecing together these local estimates under an additional global connectivity condition. In contrast, we do not assume local stability a priori. Instead, working in a general RKHS framework, we characterize the local stability constant \(C(F)\) in terms of the reciprocal of a suitably defined Cheeger constant.
	 	
	 	\item In \cite{grohs2019spectral,grohs2021stable,grohs2021L2stability,rathmair2024stable} it is shown that stability for STFT phase retrieval is controlled by the reciprocal of a Cheeger constant measuring the connectivity of the phaseless data (i.e.\ of \(|F|\) in our notation).  Furthermore, there exist two dense families of signals, one of which has associated Cheeger constant zero and the other strictly positive~\cite{alaifari2025cheeger}. 
	 	
	 	The scope of these results is, however, restricted to phase retrieval from STFT measurements (and to specific window classes). Our framework is substantially more general: it includes STFT phase retrieval as a special case, but also covers further phase retrieval scenarios; see Example~\ref{ex: coherent state systems vs RKHS}. Moreover, our formulation is regularity-preserving, i.e., our notion of Cheeger constant is defined at the level of the measurements without introducing additional smoothing or differentiation, so the analysis stays in the same regularity class as the signal generating the data.
	 	
	 	\item The works \cite{grohs2019spectral,grohs2021stable,grohs2021L2stability,cheng2021stable,rathmair2024stable} provide \emph{upper} bounds on the local stability constant in terms of the reciprocal (setting-specific) Cheeger constant, whereas \cite{bartusel2024role} derives a \emph{lower} bound but not an upper bound. Upper bounds of this type yield uniqueness for signals with positive Cheeger constant as a byproduct. By contrast, a lower bound shows that a Cheeger constant bounded away from zero is not only sufficient but also necessary for (local) stability. To the best of our knowledge, our results are the first to yield both upper and lower bounds in the real case, and lower bounds in the complex setting, within a unified RKHS framework. Moreover, our approach is dimension-agnostic: it treats finite- and infinite-dimensional settings on the same footing, and it applies to both (semi-)discrete and continuous measurement domains.
	 	
	 	\item Finally, our proofs rely primarily on elementary Hilbert space methods. In particular, they avoid some of the more specialized tools (e.g.\ complex-analytic arguments or Poincar\'e-type inequalities) that play a central role in~\cite{grohs2019spectral,grohs2021stable,grohs2021L2stability,rathmair2024stable,alaifari2024unique,alaifari2025cheeger}.
	 \end{itemize}
	 Our main contributions are:
	 \begin{itemize}
	 	\item \textbf{Injectivity for generalized wavelet phase retrieval:} Before turning to the stability theory, which is the main focus of this paper, we first address injectivity for generalized wavelet phase retrieval. In Section~\ref{sec: PR conv} we provide several characterizations of (local) phase retrievability and derive a simple sufficient criterion for wavelet sign retrieval in arbitrary dimension for transforms associated with irreducibly admissible matrix groups, thereby extending parts of the injectivity discussion in \cite{alaifari2017reconstructing,alaifari2024unique}.
	 	
	 	\item \textbf{Cheeger-based stability in RKHSs:} We develop a general RKHS stability theory for phase retrieval in which the local stability constant \(C(F)\) is characterized in terms of the reciprocal of a kernel Cheeger constant. In particular, we obtain both upper and lower stability bounds in the real case, and lower bounds in the complex case (Theorems~\ref{thm: RKHS PR lower Cheeger bound} and~\ref{thm: RKHS SR upper bound}).
	 	
	 	\item \textbf{Kernel versus graph Cheeger constants:} In the setting of generalized wavelet phase retrieval from (semi-)discrete frames, we prove an estimate relating our kernel Cheeger constant to the classical graph Cheeger constant of a suitably defined data-dependent weighted graph (Theorem~\ref{thm: kernel vs. graph Cheeger constant semi-discrete}).
	 \end{itemize}

\section{Phase retrieval from convolutions} \label{sec: PR conv}

Let $A$ be a second-countable locally compact abelian Hausdorff (SLCA) group. 
For a family of filters $\Psi = (\psi_\lambda)_{\lambda \in \Lambda} \subset L^2(A)$, the associated \emph{generalized wavelet transform} $W_\Psi f$ of $f\in L^2(A)$ is given by 
\[
W_\Psi f:A \times \Lambda \to \C, \quad  (x,\lambda) \mapsto (f \ast \psi_\lambda^*)(x).
\]
We assume $\Lambda$ to be a $\sigma$-compact locally compact Hausdorff space, 
equipped with a Radon measure $\nu$. Moreover, we impose that $\Psi$ is weakly Borel measurable; that is, for all $f \in L^2(A)$, the map 
\[
\Lambda \to \C, \quad \lambda \mapsto \langle f, \psi_\lambda \rangle 
\] 
is Borel measurable. We denote by $\mathfrak{B}_\nu(\Lambda)$ the set of all Borel measurable functions, where we identify two functions if they agree $\nu$-almost everywhere. 

The phase retrieval problem for $W_\Psi$ becomes: Recover $f$ from  $|W_\Psi f|$ (up to a unimodular scalar multiple).

For later use, we note the following consequence of the Borel measurability of the family $\Psi$.
\begin{lemma} \label{lem:norm_Borel}
	If $\Psi = (\psi_\lambda)_{\lambda \in \Lambda}$ is weakly Borel measurable, then for every $f \in L^2(A)$ the map 
	\[
	\Lambda \to [0,\infty], \quad \lambda \mapsto \| f \ast \psi_\lambda^* \|_2
	\]
	is Borel measurable.
\end{lemma}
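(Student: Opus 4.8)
The plan is to exploit the separability of $L^2(A)$ together with the $\sigma$-compactness of $A$ in order to reduce the claim to the weak Borel measurability hypothesis, which only supplies measurability of $\lambda \mapsto \langle h, \psi_\lambda\rangle$ for $h \in L^2(A)$. First I would rewrite the convolution pointwise as an inner product: for fixed $\lambda$ the function $u_\lambda(x) := (f\ast\psi_\lambda^*)(x) = \int_A f(x+y)\overline{\psi_\lambda(y)}\dy$ is continuous and bounded in $x$, with $\|u_\lambda\|_\infty \le \|f\|_2\|\psi_\lambda\|_2$, but it need not lie in $L^2(A)$ --- which is precisely why the target may take values in $[0,\infty]$.

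Write $A = \bigcup_n K_n$ with $K_n$ compact and increasing (possible since $A$ is $\sigma$-compact). By monotone convergence $\|f\ast\psi_\lambda^*\|_2^2 = \sup_n \int_{K_n}|u_\lambda(x)|^2\dx$, so it suffices to prove that each $\lambda \mapsto \int_{K_n}|u_\lambda|^2\dx = \|u_\lambda\mathbf 1_{K_n}\|_2^2$ is Borel, because a countable supremum of Borel functions is Borel and $t\mapsto\sqrt t$ is continuous. For fixed $n$ the truncation $u_\lambda\mathbf 1_{K_n}$ lies in $L^2(A)$ (a bounded function on a set of finite Haar measure). Using that $L^2(K_n)$ is separable, I fix a countable family $(\phi_{n,m})_m$ dense in the unit ball of $L^2(K_n)$, viewed as functions on $A$ supported in $K_n$; then $\|u_\lambda\mathbf 1_{K_n}\|_2 = \sup_m |\langle u_\lambda\mathbf 1_{K_n}, \phi_{n,m}\rangle|$, which is again a countable supremum (the same family works for every $\lambda$, since $\phi\mapsto\langle v,\phi\rangle$ is Lipschitz for each fixed $v\in L^2(K_n)$). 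Thus everything reduces to showing that, for each fixed $n,m$, the scalar map $\lambda\mapsto \langle u_\lambda\mathbf 1_{K_n},\phi_{n,m}\rangle$ is Borel.

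For this last step I would unfold the inner product and apply Fubini's theorem. The estimate $\int_{K_n}|\phi_{n,m}(x)|\int_A|f(x+y)||\psi_\lambda(y)|\dy\dx \le \|f\|_2\|\psi_\lambda\|_2\|\phi_{n,m}\|_{L^1(K_n)} < \infty$ (Cauchy--Schwarz in $y$, together with $\phi_{n,m}\in L^1(K_n)$ because $K_n$ has finite measure) justifies interchanging the order of integration, which gives $\langle u_\lambda\mathbf 1_{K_n},\phi_{n,m}\rangle = \langle H_{n,m},\psi_\lambda\rangle$ with $H_{n,m}(y) := \int_{K_n} f(x+y)\overline{\phi_{n,m}(x)}\dx$. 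The crucial point is that $H_{n,m}$ lies in $L^2(A)$: by Minkowski's integral inequality $\|H_{n,m}\|_2 \le \int_{K_n}|\phi_{n,m}(x)|\,\|f(\cdot+x)\|_2\dx = \|f\|_2\|\phi_{n,m}\|_{L^1(K_n)} < \infty$. Hence the weak Borel measurability hypothesis applies to the $L^2$-function $H_{n,m}$ and yields that $\lambda\mapsto\langle H_{n,m},\psi_\lambda\rangle$ is Borel; taking suprema over $m$ and then over $n$ completes the argument.

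The main obstacle is exactly that $L^2 \ast L^2 \not\subset L^2$ in general, so $u_\lambda$ itself is not an admissible test function for the hypothesis and one cannot simply invoke weak measurability of $\lambda \mapsto f \ast \psi_\lambda^*$ as an $L^2(A)$-valued map (e.g.\ via Pettis' theorem). The two devices that circumvent this --- truncating to the finite-measure sets $K_n$, so that the pairing becomes a genuine $L^2(K_n)$ inner product, and testing only against compactly supported $\phi_{n,m}$, so that the cross-correlation $H_{n,m}$ is forced back into $L^2(A)$ by Minkowski's inequality --- are what make the reduction to the hypothesis legitimate.
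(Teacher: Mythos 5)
Your proof is correct, but it follows a genuinely different route from the paper's. The paper argues in two short steps: since $(f\ast\psi_\lambda^\ast)(x)=\langle f, T_x\psi_\lambda\rangle$ is continuous in $x$ (strong continuity of the regular representation) and Borel in $\lambda$ (the hypothesis applied to $T_{-x}f$), it is a Carath\'eodory function on the metrizable separable space $A\times\Lambda$, hence jointly Borel measurable by Carath\'eodory's measurability theorem; Tonelli then immediately gives Borel measurability of $\lambda\mapsto\int_A|\langle f,T_x\psi_\lambda\rangle|^2\dx\in[0,\infty]$. You instead avoid joint measurability altogether: you exhaust $A$ by compacts, recover the truncated norms by duality against a fixed countable dense family in the unit ball of $L^2(K_n)$, and use Fubini (justified by Cauchy--Schwarz) plus Minkowski's integral inequality to rewrite each scalar pairing as $\langle H_{n,m},\psi_\lambda\rangle$ with $H_{n,m}\in L^2(A)$, so that the hypothesis applies verbatim; countable suprema finish the job. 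Both arguments ultimately rest on the same structural facts about $A$ (second countability giving separability and $\sigma$-compactness), and both correctly handle the obstruction you identify, namely that $f\ast\psi_\lambda^\ast$ need not lie in $L^2(A)$ --- the paper via Tonelli's $[0,\infty]$-valued conclusion, you via monotone convergence over the $K_n$. What the paper's route buys is brevity and a reusable joint-measurability statement (which is in fact what Lemma~\ref{lem:norm_Borel}'s proof produces on the way, and is the kind of fact needed elsewhere, e.g.\ for measurability of $\Lambda_f$); what yours buys is self-containedness, since it replaces the Carath\'eodory measurability theorem by elementary Hilbert-space duality and Fubini, and it makes explicit that the conclusion is forced by countably many direct applications of the weak measurability hypothesis.
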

\begin{proof}
	If $\Psi$ is weakly Borel measurable, then the mapping
	\[
	A \times \Lambda \to \C, \quad (x,\lambda) \mapsto \langle f, T_x \psi_\lambda \rangle 
	\] 
	is a Carath\'eodory function by the strong continuity of the regular representation of $A$ on $L^2(A)$. Note that, being an SLCA group, $A$ is (completely) metrizable and separable.
	Hence, the map is Borel measurable with respect to the product sigma algebra by Carath\'eodory's theorem. Applying Tonelli's theorem then shows that the function
	\[
	\lambda \mapsto \| f \ast \psi_\lambda^* \|_2^2 = \int_A |\langle f, T_x \psi_\lambda \rangle|^2 \dx \in [0,\infty]
	\] is also Borel measurable. 
\end{proof}

We consider the following list of conditions on the filters: 
\begin{description}
	\item[Calder\'on condition \textup{(C)}] $\Psi$ satisfies the generalized Calder\'on condition
	\[
	\int_\Lambda |\widehat{\psi}_\lambda(\xi)|^2 \dnu(\lambda) = 1 \quad \text{a.e. } \xi \in \widehat{A}.
	\]
	\item[Spectral injectivity condition \textup{(SI)}] For every $\lambda\in \Lambda$, one has $\widehat{\psi}_\lambda \in L^\infty(\widehat{A})$, and the map 
	\[
	M_\Psi: L^1(\widehat{A}) \to \mathfrak{B}_\nu(\Lambda),\quad (M_\Psi F)(\lambda) = \int_{\widehat{A}} F(\xi) \, |\widehat{\psi}_\lambda(\xi)|^2 \dxi, \] is injective. 
\end{description}

The following result clarifies the role of these conditions in generalized wavelet phase retrieval. In particular, it shows that the spectral injectivity condition \textup{(SI)} essentially derives a whole family of Fourier phase retrieval problems.

\begin{theorem}\label{thm: characterization of properties C and P}
	\begin{enumerate}
		\item[(a)] The family $\Psi$ satisfies the Calder\'on condition \textup{(C)} if and only if the operator $W_\Psi : L^2(A) \to L^2(A \times \Lambda)$ is an isometry, and this is equivalent to the weak-sense inversion formula 
		\[
		f = \int_{\Lambda} f \ast \psi_\lambda^* \ast \psi_\lambda \dnu(\lambda), 
		\] holding for all $f \in L^2(A)$. 
		\item[(b)] Assume that $\Psi$ satisfies the spectral injectivity condition \textup{(SI)}, and let $f, g \in L^2(A)$. Then $|W_\Psi f| = |W_\Psi g|$ holds if and only if the following two properties are satisfied:  
		\begin{enumerate}
			\item[(i)] $|\widehat{f}| = |\widehat{g}|$, 
			\item[(ii)] $\forall \lambda \in \Lambda\,:\,|f \ast \psi_\lambda^\ast| = |g \ast \psi_\lambda^\ast|$ and $\big| \widehat{f \ast \psi_\lambda^\ast} \big| = \big| \widehat{g \ast \psi_\lambda^\ast} \big|$. 
		\end{enumerate}
	\end{enumerate}
\end{theorem}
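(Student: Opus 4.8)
The plan is to push everything to the Fourier side via the Plancherel theorem on the SLCA group $A$, using the convolution identity $\widehat{f \ast \psi_\lambda^\ast} = \widehat{f}\,\overline{\widehat{\psi_\lambda}}$. This gives $\abs{\widehat{f \ast \psi_\lambda^\ast}} = \abs{\widehat f}\,\abs{\widehat{\psi_\lambda}}$ pointwise and, by Plancherel, $\norm{f \ast \psi_\lambda^\ast}_2^2 = \int_{\widehat A} \abs{\widehat f(\xi)}^2 \abs{\widehat{\psi_\lambda}(\xi)}^2 \dxi$, which is the single computation driving both parts.

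For part (a), I would expand $\norm{W_\Psi f}_{L^2(A \times \Lambda)}^2 = \int_\Lambda \norm{f \ast \psi_\lambda^\ast}_2^2 \dnu(\lambda)$ and interchange the order of integration by Tonelli's theorem (the integrands are nonnegative and measurable, using Lemma~\ref{lem:norm_Borel}), obtaining $\int_{\widehat A} \abs{\widehat f(\xi)}^2 \Phi(\xi)\dxi$ with $\Phi(\xi) := \int_\Lambda \abs{\widehat{\psi_\lambda}(\xi)}^2 \dnu(\lambda)$. Since $\widehat f$ ranges over all of $L^2(\widehat A)$ as $f$ ranges over $L^2(A)$, the identity $\norm{W_\Psi f}_2^2 = \norm{f}_2^2$ for all $f$ is equivalent to $\Phi \equiv 1$ a.e., which is exactly (C). For the inversion formula I would interpret the integral weakly: testing the right-hand side against $h \in L^2(A)$ and unfolding the convolutions via the adjoint relation $\langle a \ast \psi_\lambda, h\rangle = \langle a, h \ast \psi_\lambda^\ast\rangle$ gives $\langle W_\Psi f, W_\Psi h\rangle_{L^2(A\times\Lambda)}$; hence the weak-sense inversion formula is equivalent to $\langle W_\Psi f, W_\Psi h\rangle = \langle f, h\rangle$ for all $f,h$, i.e. to $W_\Psi$ being an isometry by polarization. (The same Fourier computation gives the right-hand side Fourier transform $\widehat f\cdot\Phi$, so the formula reduces once more to $\Phi \equiv 1$.)

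For part (b), the reverse implication is immediate, since the first assertion in (ii) says precisely $\abs{f \ast \psi_\lambda^\ast(x)} = \abs{g \ast \psi_\lambda^\ast(x)}$, which is $\abs{W_\Psi f} = \abs{W_\Psi g}$. The content is in the forward direction. Assuming $\abs{W_\Psi f} = \abs{W_\Psi g}$, Fubini's theorem yields the first part of (ii) (for $\nu$-a.e. $\lambda$, with $x$-a.e. equality), and in particular $\norm{f \ast \psi_\lambda^\ast}_2 = \norm{g \ast \psi_\lambda^\ast}_2$ for a.e. $\lambda$. The decisive step is to recognize, via the Plancherel identity above, that $\norm{f \ast \psi_\lambda^\ast}_2^2 = (M_\Psi \abs{\widehat f}^2)(\lambda)$ and likewise for $g$, where $\abs{\widehat f}^2, \abs{\widehat g}^2 \in L^1(\widehat A)$ because $\widehat f, \widehat g \in L^2(\widehat A)$. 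Thus $M_\Psi(\abs{\widehat f}^2 - \abs{\widehat g}^2) = 0$ in $\mathfrak{B}_\nu(\Lambda)$, and the spectral injectivity condition (SI) forces $\abs{\widehat f}^2 = \abs{\widehat g}^2$ a.e., which is (i). The second part of (ii) then follows from (i) together with $\abs{\widehat{f \ast \psi_\lambda^\ast}} = \abs{\widehat f}\,\abs{\widehat{\psi_\lambda}} = \abs{\widehat g}\,\abs{\widehat{\psi_\lambda}} = \abs{\widehat{g \ast \psi_\lambda^\ast}}$, valid for every $\lambda$.

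The main obstacle is the forward direction of (b): the insight that taking spatial $L^2$-norms converts the phaseless measurement equality into the operator identity $M_\Psi \abs{\widehat f}^2 = M_\Psi \abs{\widehat g}^2$, so that (SI) can be invoked to recover the single global constraint $\abs{\widehat f} = \abs{\widehat g}$ (this is the sense in which (SI) ``derives a family of Fourier phase retrieval problems''). A secondary bookkeeping point is the passage between ``$\nu$-a.e.\ $\lambda$'', which is all the spatial moduli provide, and the ``for all $\lambda$'' formulation in (ii); this is harmless, since both (i) and the second part of (ii) descend from the $\lambda$-independent identity $\abs{\widehat f} = \abs{\widehat g}$, leaving it as the only place requiring measure-theoretic care.
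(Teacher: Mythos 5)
Your proof is correct and takes essentially the same approach as the paper: for part (b) you pass to spatial $L^2$-norms, use Plancherel and the convolution theorem to recognize $M_\Psi(|\widehat{f}|^2)=M_\Psi(|\widehat{g}|^2)$, invoke \textup{(SI)} to get (i), and deduce (ii) from (i) via the convolution theorem, which is exactly the paper's argument; part (a) is the standard Plancherel/Tonelli computation that the paper dismisses as folklore. The only difference is that you fill in the details of (a) and the $\nu$-a.e.\ bookkeeping, both of which the paper omits.
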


\begin{proof}
	Part (a) is a folklore consequence of the Plancherel and convolution theorems.
	
	For part (b), assume that $|W_\Psi f| = |W_\Psi g|$. Then the convolution and the Plancherel theorem yield, for all $\lambda \in \Lambda$, 
	\begin{eqnarray*}
		\int_{\widehat{A}} |\widehat{f}(\xi)|^2 \, |\widehat{\psi}_\lambda(\xi)|^2 \dxi  & = &
		\int_{A} |(f \ast \psi_\lambda^\ast)(x)|^2 \dx \\
		& = &  \int_{A} |(g \ast \psi_\lambda^\ast)(x)|^2 \dx 
		 =   \int_{\widehat{A}} |\widehat{g}(\xi)|^2 \, |\widehat{\psi}_\lambda(\xi)|^2 \dxi.
	\end{eqnarray*} Hence $M_\Psi (|\widehat{f}|^2) = M_\Psi (|\widehat{g}|^2)$, and thus $|\widehat{f}|^2 = |\widehat{g}|^2$ by the spectral injectivity condition \textup{(SI)}. This shows (i). Now (ii) follows from (i) and the convolution theorem. 
\end{proof}

\begin{remark}\label{rm: conditions (C) and (SI)}
	The Calder\'on condition \textup{(C)} and methods for guaranteeing it are well-understood in the contexts of time-frequency analysis and (generalized) wavelet transforms. For instance, in the case of time frequency analysis, we have
	\[
	\Lambda = \widehat{A},\quad \psi_\lambda = M_\lambda \psi,\quad \nu = \mbox{Haar measure on } \widehat{A}
	\] and any $\psi \in L^2(A)$ with $\| \psi \|_2 = 1$ creates a system satisfying the Calder\'on condition \textup{(C)}. 
	
	In the wavelet setting we take \(A=\R^{d}\) and \(\Lambda = H\le \GL_d(\R)\), where \(H\) is a suitably chosen matrix group. The filters are of the form
	\[
	\psi_{\lambda}=|\det(\lambda)|^{-1}\,\psi(\lambda^{-1}\,\cdot),\quad \lambda\in H,
	\]
	and we equip \(H\) with its left Haar measure \(\nu\). Under mild assumptions on \(\psi\) (depending on the choice of \(H\)), the Calder\'on condition \textup{(C)} is satisfied.

	The spectral injectivity condition \textup{(SI)} is motivated by the phase retrieval problem and, to the best of our knowledge, is new in this context. It is not immediately clear under which circumstances this condition holds, although several simple observations can be made in concrete cases.
	
	In the time-frequency setting, note that $M_\Psi F = F \ast |\widehat{\psi}|^2$. It is a standard observation that a convolution operator is injective if and only if the Fourier transform of the convolution kernel vanishes almost nowhere. Specifically, if $V_\psi f$ denotes the windowed Fourier transform of $f$ with respect to the window $\psi$, then Theorem~\ref{thm: characterization of properties C and P} yields that $|V_\psi f| = |V_\psi g|$ implies $|\widehat{f}| = |\widehat{g}|$ whenever $\psi \ast \psi^*$ vanishes almost nowhere.
	We expect that this observation can be deduced by more direct means, e.g., by proper integration over $|V_\psi f|^2$. 
	In any case, this condition is satisfied if $A = \mathbb{R}^d$ and $\psi$ is bandlimited, i.e., $\widehat{\psi}$ is compactly supported. 
	
	In the wavelet setting, first note that $|\widehat{\psi}_\lambda|^2$ is symmetric if $\psi$ is real. In this case, the spectral injectivity condition \textup{(SI)} is not satisfied. However, for the one-dimensional setting with $H=\R^\ast$, the map $M_\Psi$ is related to a convolution operator over $H$, and we can say the following: Assume that $\psi$ has compact support $\supp(\widehat{\psi}) \subset \R_{> 0}$ (away from zero). Then $\Psi$ satisfies the spectral injectivity condition \textup{(SI)}. We provide the proof of this claim in the appendix, see Proposition~\ref{prop: sufficient condition for (SI)}. 
\end{remark}

We next want to introduce definitions that are connected with strategies for phase retrieval, or at least for better understanding the problem. These notions rely on the idea of separating the phase retrieval problem into a ``local'' problem that addresses recovering \(f*\psi_\lambda^\ast\) from \(|W_\Psi f|\) for each \(\lambda\) separately, and a ``global'' problem that asks how many ways there are to synthesize the local solutions into a global one. This local--global separation has been formulated in various places in the literature; see, e.g.,~\cite{alaifari2017reconstructing,cheng2021stable,bartusel2024role}. It is important to note that ``local'' here refers to the variable $\lambda \in \Lambda$, which indexes a partition of unity in the frequency domain. 

\begin{definition}
	For any $f \in L^2(A)$, we define $\Lambda_f := \{ \lambda \in \Lambda : \| f \ast \psi_\lambda \|_2 \not= 0 \}$. 
	Moreover, we say that $f$ is \emph{locally phase retrievable} if for every $\lambda\in \Lambda$ and every $g\in L^2(A)$ it holds that
	\[
	|f*\psi_\lambda^\ast|=|g*\psi_\lambda^\ast| \quad \Rightarrow \quad g \ast \psi_\lambda^* \in \T \cdot (f \ast \psi_\lambda^*).  
	\]
\end{definition}

An alternative way of formulating local phase retrievability is to say that, up to phase factors, all solutions to the $\lambda$-local phase retrieval problem are of the form $f + g$, where $g \ast \psi_{\lambda} = 0$. This observation stresses that the separation of the phase retrieval problem into a local and a global one only makes sense if the involved filters are bandlimited. The following definition introduces language that makes this observation more precise. 

\begin{definition}\label{def: approx_f and sim_f}
	Let $f \in L^2(A)$. We introduce an equivalence relation on $\Lambda$, defined as the transitive hull $\sim_f$ of the symmetric, reflexive relation 
	\[
	\lambda \approx_f \lambda' :\Leftrightarrow \| f \ast \psi_\lambda^* \ast \psi_{\lambda'}^* \|_2 > 0. 
	\]
	The equivalence class modulo $\sim_f$ of $\lambda \in \Lambda$ is denoted by $[\lambda]_{\sim_f}$.
\end{definition}
The convolution theorem and  $\widehat{g*} = \overline{\widehat{g}}$ imply $\| f \ast g \|_2  = \| f \ast g^*\|_2 $, for all $f, g\in L^2(A)$. This allows to replace $\psi_\lambda^*$ by $\psi_\lambda$ and $\psi_{\lambda'}^*$ by $\psi_{\lambda'}$, wherever this may be helpful.

The following lemma formulates a number of fundamental observations in connection with $\sim_f$. Parts (b) and (c) point out a general construction of counterexamples to unique phase retrieval. These illustrate that a necessary condition for phase retrieval is the existence of a ``large'' equivalence class $[\lambda]_{\sim_f} \subset \Lambda_f$. Clearly this property is related to the support properties of the filters in Fourier domain. Conversely, part~(d) provides a route towards uniqueness via \emph{phase propagation}: phase information can be propagated within each $\sim_f$-equivalence class, and larger classes correspond to fewer remaining ambiguities for the given signal.
\begin{lemma}\label{lem:local_global}
	Assume that $\Psi$ satisfies the Calder\'on condition \textup{(C)}, and let $f \in L^2(A)$. 
	\begin{enumerate}
		\item[(a)] $\Lambda_f$ is a Borel-measurable union of $\sim_f$-equivalence classes. 
		\item[(b)] Let $U \subset \Lambda$ denote any Borel measurable union of $\sim_f$-equivalence classes. Define 
		\[
		f_U := \int_U f \ast \psi_\lambda^* \ast \psi_\lambda \dnu(\lambda). 
		\]
		Then $f_U \in L^2(A)$, with 
		\[
		f_U \ast \psi_{\lambda^\prime}^* = \begin{cases}
			f \ast \psi_{\lambda^\prime}^* & \lambda^\prime \in U \\ 0 & \lambda^\prime \not \in U
		\end{cases}.
		\]
		\item[(c)] Assume that $\Lambda = \bigcup_{j \in J} U_j$, with pairwise disjoint Borel sets $U_j$ that are unions of $\sim_f$-equivalence classes. Let $(\sigma_j)_{j \in J} \in \T^J$ be arbitrary. Then 
		\[
		g = \sum_{j \in J} \sigma_j f_{U_j} 
		\] converges unconditionally in $L^2(A)$, with 
		\begin{equation} \label{eqn:changesign}
			\forall j \in J \,\forall \lambda \in U_j\,:\,  W_\Psi g(\cdot,\lambda) = \sigma_j W_\Psi f (\cdot,\lambda).
		\end{equation}
		\item[(d)] Assume that $f$ is locally phase retrievable. Let $g \in L^2(A)$ with $|W_\Psi g| = |W_\Psi f|$. Then there exists a family $(\sigma_{\lambda})_{\lambda \in \Lambda} \in \T^{\Lambda}$ such that 
		\[
		\forall \lambda \in \Lambda \,:\, g \ast \psi_{\lambda}^* = \sigma_{\lambda} (f \ast \psi_{\lambda}^*).
		\] 
		Furthermore, the map $\lambda \mapsto \sigma_\lambda$ is constant on $\sim_f$-equivalence classes. 
	\end{enumerate}
\end{lemma}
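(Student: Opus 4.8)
The plan is to pass to the Fourier side throughout, where the three convolutions appearing in the definitions become multiplications. Using $\widehat{\psi_\lambda^\ast}=\overline{\widehat{\psi_\lambda}}$ together with the Plancherel and convolution theorems, the single identity
\[
\|f\ast\psi_\lambda^\ast\ast\psi_{\lambda'}^\ast\|_2^2=\int_{\widehat A}|\widehat f(\xi)|^2\,|\widehat{\psi_\lambda}(\xi)|^2\,|\widehat{\psi_{\lambda'}}(\xi)|^2\dxi
\]
translates $\lambda\approx_f\lambda'$ into the assertion that $|\widehat{\psi_\lambda}|^2$ and $|\widehat{\psi_{\lambda'}}|^2$ overlap on a set of positive measure inside the spectral support $\{\widehat f\neq 0\}$. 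All four parts are driven by this dictionary together with the Calder\'on identity $\int_\Lambda|\widehat{\psi_\lambda}(\xi)|^2\dnu(\lambda)=1$ for a.e.\ $\xi$, furnished by Theorem~\ref{thm: characterization of properties C and P}(a). For (a), Borel measurability of $\Lambda_f$ follows from Lemma~\ref{lem:norm_Borel} after noting $\|f\ast\psi_\lambda\|_2=\|f\ast\psi_\lambda^\ast\|_2$. To see that $\Lambda_f$ is $\sim_f$-saturated, I would observe that $\lambda\approx_f\lambda'$ forces the displayed integral to be positive, hence a fortiori $\int|\widehat f|^2|\widehat{\psi_{\lambda'}}|^2>0$, i.e.\ $\lambda'\in\Lambda_f$ (and, symmetrically, $\lambda\in\Lambda_f$). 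Thus every nontrivial $\approx_f$-link lies inside $\Lambda_f$, while a $\lambda\notin\Lambda_f$ is $\approx_f$-related only to itself; hence $\Lambda_f$ is a union of $\sim_f$-classes.

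Part (b) is the technical core. On the Fourier side $\widehat{f_U}=\widehat f\cdot m_U$ with $m_U(\xi):=\int_U|\widehat{\psi_\lambda}(\xi)|^2\dnu(\lambda)$; the Calder\'on identity gives $0\le m_U\le 1$, so $f_U\in L^2(A)$ with $\|f_U\|_2\le\|f\|_2$, and Tonelli justifies the weak integral. The two claimed identities read, after taking Fourier transforms, $\widehat f\,\overline{\widehat{\psi_{\lambda'}}}\,(1-m_U)=0$ a.e.\ for $\lambda'\in U$ and $\widehat f\,\overline{\widehat{\psi_{\lambda'}}}\,m_U=0$ a.e.\ for $\lambda'\notin U$, where in the first case $1-m_U=\int_{\Lambda\setminus U}|\widehat{\psi_\lambda}|^2\dnu$ by Calder\'on. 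I would argue by contradiction: if the set $B$ on which $\widehat f\,\overline{\widehat{\psi_{\lambda'}}}\neq 0$ while the complementary mass $\int_{\Lambda\setminus U}|\widehat{\psi_\lambda}|^2\dnu$ (resp.\ $\int_U|\widehat{\psi_\lambda}|^2\dnu$) is positive had positive measure, then Tonelli applied to $\int_B|\widehat f|^2|\widehat{\psi_{\lambda'}}|^2\big(\int_{\Lambda\setminus U}|\widehat{\psi_\lambda}|^2\dnu\big)\dxi$ would produce some $\lambda''$ in the opposite set with $\int|\widehat f|^2|\widehat{\psi_{\lambda'}}|^2|\widehat{\psi_{\lambda''}}|^2>0$, i.e.\ $\lambda'\approx_f\lambda''$. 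Since $U$ is a union of $\sim_f$-classes, this places $\lambda''$ on the same side as $\lambda'$, a contradiction. The careful bookkeeping of this double integral together with the requisite measurability is the step I expect to be the main obstacle.

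The remaining parts reuse this machinery. For (c), the same Tonelli computation gives $\langle f_{U_j},f_{U_k}\rangle=\int_{U_j}\int_{U_k}\|f\ast\psi_\lambda^\ast\ast\psi_{\lambda'}^\ast\|_2^2\dnu(\lambda')\dnu(\lambda)=0$ for $j\neq k$, because disjoint classes admit no $\approx_f$-link; together with $\sum_j\|f_{U_j}\|_2^2\le\|f\|_2^2$ (from $0\le m_{U_j}\le 1$ and $\sum_j m_{U_j}=1$), the pairwise orthogonal series $\sum_j\sigma_jf_{U_j}$ converges unconditionally in $L^2(A)$. Since $h\mapsto h\ast\psi_\lambda^\ast$ is continuous from $L^2(A)$ into $C_0(A)$ by Cauchy--Schwarz, part~(b) then collapses the sum: for $\lambda\in U_j$ only the term $k=j$ survives, yielding \eqref{eqn:changesign}. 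Finally, for (d), local phase retrievability applied for each $\lambda$ with $|g\ast\psi_\lambda^\ast|=|f\ast\psi_\lambda^\ast|$ (which holds for a.e.\ $\lambda$ by the hypothesis $|W_\Psi g|=|W_\Psi f|$, choosing $\sigma_\lambda=1$ where $f\ast\psi_\lambda^\ast=0$) yields $\sigma_\lambda\in\T$ with $g\ast\psi_\lambda^\ast=\sigma_\lambda(f\ast\psi_\lambda^\ast)$. Constancy on $\sim_f$-classes is then a phase-propagation argument: for $\lambda\approx_f\lambda'$ one computes $g\ast\psi_\lambda^\ast\ast\psi_{\lambda'}^\ast$ in two ways and uses commutativity of convolution to get $\sigma_\lambda(f\ast\psi_\lambda^\ast\ast\psi_{\lambda'}^\ast)=\sigma_{\lambda'}(f\ast\psi_\lambda^\ast\ast\psi_{\lambda'}^\ast)$; as this function is nonzero, $\sigma_\lambda=\sigma_{\lambda'}$, and transitivity propagates the equality across each class.
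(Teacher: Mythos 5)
Your proposal is correct and reaches the same conclusions, but parts (a)--(c) follow a genuinely different route from the paper. You work entirely on the Fourier side via the multiplier $m_U(\xi)=\int_U|\widehat{\psi}_\lambda(\xi)|^2\dnu(\lambda)$, so that $\widehat{f_U}=\widehat{f}\,m_U$, and you prove the two identities in (b) by a Tonelli-plus-contradiction argument: positive spectral overlap with the complementary multiplier would manufacture an $\approx_f$-link leaving the saturated set $U$. The paper instead argues operator-theoretically: it writes $f_U=W_\Psi^\ast F_U$ with $F_U=(W_\Psi f)\,\mathds{1}_{A\times U}$, obtains $F_U\in L^2(A\times\Lambda)$ from the isometry (Calder\'on), and derives the identities in (b) from the weak inversion formula together with $\norm{f\ast\psi_\lambda^\ast\ast\psi_{\lambda'}}_2=0$ across distinct classes; orthogonality in (c) then falls out of the isometry and the disjoint $\lambda$-supports, giving the exact identity $\sum_j\norm{f_{U_j}}_2^2=\norm{f}_2^2$ rather than the inequality you use (either suffices for unconditional convergence). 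Your version buys pointwise explicitness and a transparent mechanism for class saturation; the paper's version buys economy on measurability, since it needs only the joint measurability of $W_\Psi f$ already established in Lemma~\ref{lem:norm_Borel}, whereas you need $(\xi,\lambda)\mapsto\widehat{\psi}_\lambda(\xi)$ jointly measurable to define $m_U$ and apply Tonelli. That fact follows from weak measurability of $\lambda\mapsto\widehat{\psi}_\lambda$ plus separability of $L^2(\widehat{A})$, and is implicitly presupposed by condition \textup{(C)} itself, but it deserves a sentence --- it is exactly the ``requisite measurability'' you flagged. Part (d) coincides with the paper's argument.

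Two small repairs. In (c), the claim $\sum_j m_{U_j}=1$ can fail when $J$ is uncountable; you only need $\sum_{j\in F}m_{U_j}\le 1$ a.e.\ for every finite $F\subset J$, which Calder\'on provides and which already yields $\sum_j\norm{f_{U_j}}_2^2\le\norm{f}_2^2$. In (d), you read the hypothesis $|W_\Psi g|=|W_\Psi f|$ as an a.e.\ statement on $A\times\Lambda$, which only gives $|g\ast\psi_\lambda^\ast|=|f\ast\psi_\lambda^\ast|$ for $\nu$-a.e.\ $\lambda$ --- too weak for the asserted ``for all $\lambda\in\Lambda$'' conclusion, since nothing controls the exceptional null set of $\lambda$'s. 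The paper treats $W_\Psi f$ as a genuine function on $A\times\Lambda$ and reads the equality pointwise (cf.\ the proof of Theorem~\ref{thm: characterization of properties C and P}(b)); with that reading the hypothesis holds for every $\lambda$, and your argument, including the phase-propagation step, closes without change.
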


\begin{proof}
	Proof of (a): Measurability of $\Lambda_f$ follows from Lemma~\ref{lem:norm_Borel}. For $\lambda \in \Lambda \setminus \Lambda_f$, one finds $[\lambda]_{\sim_f} = \{ \lambda \}$, hence $\Lambda \setminus \Lambda_f$ is a union of equivalence classes. The same then holds for $\Lambda_f$. 
	
	Proof of (b): Given $U$, let 
	\[
	F_U: A \times \Lambda \to \mathbb{C},\quad F_U(x,\lambda) =  (f \ast \psi_\lambda^*)(x) \cdot \mathds{1}_{U}(\lambda).
	\]
	Then $F_U \in L^2(A \times \Lambda)$ due to the Calder\'on condition \textup{(C)}, hence $W_\Psi^\ast(F_U)\in L^2(A)$. Now,
	\begin{align*}
		W_\Psi^*(F_U) = \int_{\Lambda} F_U (\cdot ,\lambda) \ast \psi_\lambda \dnu(\lambda) 
		= \int_U W_\Psi f(\cdot,\lambda) \ast \psi_\lambda \dnu(\lambda)
		= f_U,
	\end{align*} 
	where the integral converges in the weak sense.
	
	Picking any $\lambda' \in U$, we have 
	\begin{align*}
		f_U \ast \psi_{\lambda'}^* = \int_U W_\Psi f(\cdot,\lambda) \ast \psi_\lambda \ast \psi_{\lambda'}^* \dnu(\lambda)
		=   \int_U f \ast \psi_\lambda^* \ast \psi_\lambda \ast \psi_{\lambda'}^* \dnu(\lambda).
	\end{align*}
	Since $[\lambda']_{\sim_f} \subset U$, it follows that, for all $\lambda \in \Lambda \setminus U$, 
	\[
	\| f \ast \psi_\lambda^* \ast \psi_{\lambda'} \|_2 = 0.
	\] 
	Thus, using the weak-sense inversion formula for $f$, we find that
	\begin{align*}
		\int_U f \ast \psi_\lambda^* \ast \psi_\lambda \ast \psi_{\lambda'}^* \dnu(\lambda) 
		= \int_\Lambda f \ast \psi_\lambda^* \ast \psi_\lambda \ast \psi_{\lambda'}^* \dnu(\lambda)
		= f \ast \psi_{\lambda'}^*.
	\end{align*} 
	If instead $\lambda' \in \Lambda \setminus U$ and $\lambda \in U$, the assumption that $U$ is a union of $\sim_f$-equivalence classes yields 
	\[ f \ast \psi_{\lambda} \ast \psi_{\lambda'}^* = 0 \]
	and therefore 
	\begin{align*}
		f_U \ast \psi_{\lambda'}^* = \int_U f \ast \psi_\lambda^*  \ast \psi_\lambda \ast \psi_{\lambda'}^* \dnu(\lambda)  = 0. 
	\end{align*}
	This shows part (b). 
	
	For part (c), part (b) implies that $(\sigma_j f_{U_j})_{j \in J}$ is a family in $L^2(A)$. Pairing part (b) also with the isometry property of $W_\Psi$ yields, for all $j \not= k$,  
	\[
	\langle f_{U_j}, f_{U_k} \rangle = \langle W_\Psi f_{U_j}, W_\Psi f_{U_k} \rangle = 0. 
	\] 
	Since the $U_j$ are disjoint and cover all of $\Lambda_f$, we obtain 
	\[
	\sum_{j \in J} \|f_{U_j} \|_2^2 = \sum_{j \in J} \| W_\Psi f_{U_j} \|_2^2 = \| W_\Psi f \|_2^2 = \| f \|_2^2.
	\]
	Hence
	\[
	g = \sum_{j \in J} \sigma_j f_{U_j}
	\] is an unconditionally converging orthogonal sum, for every choice $(\sigma_j)_{j \in J} \in \T^J$, and (\ref{eqn:changesign}) follows from (b) and disjointness of the $U_j$.

	Proof of (d): 
	The existence of the family $(\sigma_\lambda)_{\lambda \in \Lambda}$ with the desired properties follows from the definition of local phase retrievability. 
	
	For $\lambda \not= \lambda'$, associativity and commutativity of convolution allow to compute
	\begin{align*}
		f \ast \psi_\lambda^\ast \ast \psi_{\lambda'}^\ast = \sigma_\lambda^{-1} (g \ast \psi_\lambda^\ast \ast \psi_{\lambda'}^\ast) 
		= \sigma_{\lambda'} \sigma_{\lambda}^{-1} (f \ast \psi_\lambda^\ast \ast \psi_{\lambda'}^\ast),
	\end{align*}
	which for $\lambda \approx_f \lambda'$ entails $\sigma_\lambda = \sigma_{\lambda'}$. Since $\sim_f$ is the transitive hull of $\approx_f$, we get $\sigma_\lambda = \sigma_{\lambda'}$ for all $\lambda' \in [\lambda]_{\sim_f}$. 
\end{proof}

We can combine the observations collected in Lemma~\ref{lem:local_global} to obtain the following characterization of phase retrievability.

\begin{corollary}\label{cor: PR vs local PR}
	Assume that $\Psi$ satisfies the Calder\'on condition \textup{(C)}, and let $f \in L^2(A)$. 
	Then $f$ is phase retrievable if and only if the following two properties hold:
	\begin{enumerate}
		\item[(a)] $f$ is locally phase retrievable; 
		\item[(b)] If $\Lambda_f = U \cup V$, with $U, V$ disjoint Borel unions of $\sim_f$-equivalence classes, then $\nu(U) = 0$ or $\nu(V) = 0$. 
	\end{enumerate}
\end{corollary}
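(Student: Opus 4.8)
The plan is to prove the two implications separately, in each case reducing to the structural facts in Lemma~\ref{lem:local_global} together with the isometry of $W_\Psi$ from Theorem~\ref{thm: characterization of properties C and P}(a). The case $f=0$ is trivial ($\Lambda_f=\emptyset$ and everything holds vacuously), so assume $f\neq 0$.

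\emph{Sufficiency: (a) and (b) $\Rightarrow$ phase retrievability.} Let $g\in L^2(A)$ satisfy $|W_\Psi g|=|W_\Psi f|$, so that $|g\ast\psi_\lambda^*|=|f\ast\psi_\lambda^*|$ for a.e.\ $\lambda$. Local phase retrievability (a) yields, for each such $\lambda$, a scalar $\sigma_\lambda\in\T$ with $g\ast\psi_\lambda^*=\sigma_\lambda\,(f\ast\psi_\lambda^*)$, and Lemma~\ref{lem:local_global}(d) shows $\lambda\mapsto\sigma_\lambda$ is constant on $\sim_f$-equivalence classes. On $\Lambda_f$ we may write $\sigma_\lambda=\langle g\ast\psi_\lambda^*,\,f\ast\psi_\lambda^*\rangle\,\norm{f\ast\psi_\lambda^*}_2^{-2}$, which is $\nu$-measurable by the measurability results behind Lemma~\ref{lem:norm_Borel}. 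The key step is to upgrade ``constant on classes'' to ``essentially constant on $\Lambda_f$'': for any Borel $B\subset\T$ the set $U:=\sigma^{-1}(B)\cap\Lambda_f$ is a Borel union of $\sim_f$-classes with complement $V:=\Lambda_f\setminus U$ likewise, so condition (b) forces $\nu(U)=0$ or $\nu(V)=0$. Thus the image measure of $\nu|_{\Lambda_f}$ under $\sigma$ attains only the values $0$ and $\nu(\Lambda_f)$ on Borel sets, hence is concentrated at a single point $\alpha\in\T$, i.e.\ $\sigma\equiv\alpha$ a.e.\ on $\Lambda_f$. Off $\Lambda_f$ both $f\ast\psi_\lambda^*$ and $g\ast\psi_\lambda^*$ vanish, so $W_\Psi g=\alpha\,W_\Psi f$ a.e., and the isometry gives $g=\alpha f$.

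\emph{Necessity of (b).} I argue by contraposition. Suppose $\Lambda_f=U\cup V$ with $U,V$ disjoint Borel unions of $\sim_f$-classes and $\nu(U),\nu(V)>0$. Applying Lemma~\ref{lem:local_global}(c) to the partition $\Lambda=U\sqcup V\sqcup(\Lambda\setminus\Lambda_f)$ with signs $+1,-1,+1$, the function $g:=f_U-f_V$ satisfies $W_\Psi g(\cdot,\lambda)=\pm W_\Psi f(\cdot,\lambda)$, hence $|W_\Psi g|=|W_\Psi f|$. Since $f_U\ast\psi_\lambda^*=f\ast\psi_\lambda^*\neq 0$ on the positive-measure set $U\subset\Lambda_f$ we have $f_U\neq 0$, and similarly $f_V\neq 0$; moreover $f_U\perp f_V$ and $f=f_U+f_V$. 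A difference $f_U-f_V$ of two nonzero orthogonal vectors cannot be a unimodular multiple of their sum $f_U+f_V$, so $g\notin\T\cdot f$ and $f$ fails phase retrievability.

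\emph{Necessity of (a), and the main obstacle.} Again by contraposition. If $f$ is not locally phase retrievable there is some $\lambda_0$, necessarily in $\Lambda_f$ (off $\Lambda_f$ the local problem is trivial since $f\ast\psi_{\lambda_0}^*=0$), and an $h\in L^2(A)$ with $|h\ast\psi_{\lambda_0}^*|=|f\ast\psi_{\lambda_0}^*|$ but $h\ast\psi_{\lambda_0}^*\notin\T\cdot(f\ast\psi_{\lambda_0}^*)$. The aim is to promote this local obstruction to a global witness $g$ with $|W_\Psi g|=|W_\Psi f|$ yet $g\notin\T\cdot f$, by altering $f$ only across the band(s) indexed by $[\lambda_0]_{\sim_f}$ while preserving every local modulus. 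I expect this to be the hard part: a single index $\lambda_0$ may carry no $\nu$-mass, so one must show the failure persists on a positive-measure set and can be realized coherently across the whole equivalence class before it contradicts global phase retrievability. This is precisely where the band structure of the filters and the measurability of $\lambda\mapsto\psi_\lambda$ must be exploited, in contrast to the other two implications, which follow routinely from Lemma~\ref{lem:local_global}.
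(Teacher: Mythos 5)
Your treatment of two of the three implications is correct and coincides with the paper's own proof. For sufficiency you use Lemma~\ref{lem:local_global}(d) to get a measurable phase map $\sigma$ constant on $\sim_f$-classes, apply hypothesis (b) to the preimages $\sigma^{-1}(B)\cap\Lambda_f$, and finish with the inversion formula; for the necessity of (b) you use exactly the paper's witness $g=f_U-f_V$ built from Lemma~\ref{lem:local_global}(b)--(c). One cosmetic remark: your image-measure step (``only the values $0$ and $\nu(\Lambda_f)$, hence a Dirac mass'') is too quick when $\nu(\Lambda_f)=\infty$, but the repair is the argument you already wrote, namely applying (b) to the preimages of a Borel partition of $\T$; the paper phrases it this way and avoids the issue.

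The genuine gap is the one you flagged yourself: you never prove that phase retrievability implies local phase retrievability, so you have not established the stated equivalence. Concretely, from a local impostor $h$ with $|h\ast\psi_{\lambda_0}^*|=|f\ast\psi_{\lambda_0}^*|$ but $h\ast\psi_{\lambda_0}^*\notin\T\cdot(f\ast\psi_{\lambda_0}^*)$ one must produce a global impostor $g$ with $|W_\Psi g|=|W_\Psi f|$ on all of $A\times\Lambda$ and $g\notin\T\cdot f$; since the supports of the $\widehat{\psi}_\lambda$ may overlap, $h\ast\psi_{\lambda_0}^*$ need not occur as the $\lambda_0$-band of any admissible $g$, and your proposal stops exactly there. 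You should know, however, that the paper does not resolve this either: its proof dismisses this direction with the single sentence that it ``is obvious,'' offering no construction, so you have not overlooked a technique that the paper supplies---you have isolated a step the paper leaves unjustified, and your diagnosis of where the difficulty lies (extending a local obstruction coherently across overlapping bands) is sound. Note also that in the paper's only application of Corollary~\ref{cor: PR vs local PR} (Theorem~\ref{thm: PR bandlimited wavelets, real case}), only the ``if'' direction is invoked, with condition (a) furnished unconditionally by Lemma~\ref{lem:bl_sr}; the direction you are stuck on is precisely the one the paper never uses. Still, as a proof of the corollary as stated, your proposal is incomplete.
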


\begin{proof}
	That phase retrievability of $f$ implies condition (a) is obvious. Furthermore, any counterexample $U,V \subset \Lambda_f$ to condition (b) yields a counterexample to phase retrievability, in the form of 
	\[
	g = f_U - f_V.
	\] Here we note that 
	\[ \| f_U \|_2^2 = \int_U \| f \ast \psi_{\lambda}^\ast \|_2^2 \,\dnu(\lambda) >0 \]
	by assumption on $U$ and definition of $\Lambda_f \supset U$, and $\| f_V \|_2^2>0$ follows analogously.  
	
	Conversely, assume that (a) and (b) hold, and let $g \in L^2(A)$ be such that $|W_\Psi f| = |W_\Psi g|$ holds. By Assumption (a), this implies that, for all $\lambda \in \Lambda_f$,
	\[
	W_\Psi g(\cdot,\lambda) = \sigma_\lambda W_\Psi f(\cdot,\lambda),
	\]
	with a Borel measurable map $\Lambda_f \ni \lambda \mapsto \sigma_\lambda \in \T$.
	
	It suffices to show that $\sigma_\lambda$ is $\nu$-a.e. constant on $\Lambda_f$. Suppose not. Then there exists a Borel subset $C\subset \T$ such that both sets
	\[
	U = \{ \lambda \in \Lambda_f : \sigma_\lambda \in C \},\quad V = \Lambda_f \setminus U
	\] 
	have positive $\nu$-measure. By Lemma~\ref{lem:local_global}, $U$ and $V$ are unions of $\sim_f$-equivalence classes, contradicting assumption (b). 
	
	Therefore $\lambda \mapsto \sigma_\lambda$ is $\nu$-a.e. constant on $\Lambda_f$, say with value $\sigma\in \T$. Now  $g = \sigma f$ follows from the inversion formula. 
\end{proof}

The following lemma lays out the ground for the remaining analysis in this section.

\begin{lemma}\label{lem:bl_sr}
	Assume that the field of scalars is $\F=\R$. If $\Psi$ is a family of bandlimited filters satisfying the Calder\'on condition \textup{(C)}, then every $f \in L^2(\R^d)$ is locally sign retrievable in $L^2(\R^d)$.
\end{lemma}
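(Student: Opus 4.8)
The plan is to reduce local sign retrievability, at each fixed $\lambda$, to a rigidity statement for real-analytic functions. First I would fix $\lambda\in\Lambda$ and set $h:=f\ast\psi_\lambda^\ast$. Since $\F=\R$ and the filters are real-valued, $h$ is a real-valued element of $L^2(\R^d)$. Because $\psi_\lambda$ is bandlimited, $\widehat{\psi}_\lambda$ has compact support, and the convolution theorem gives $\widehat{h}=\widehat{f}\cdot\overline{\widehat{\psi}_\lambda}$, so that $\supp\widehat{h}\subseteq\supp\widehat{\psi}_\lambda$ is compact; hence $h$ is itself bandlimited. By the Paley--Wiener theorem, $h$ then extends to an entire function of exponential type, and in particular its restriction to $\R^d$ is real-analytic.

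Next, let $g\in L^2(\R^d)$ satisfy $|g\ast\psi_\lambda^\ast|=|h|$ and put $g':=g\ast\psi_\lambda^\ast$. Exactly as above, $g'$ is a real-valued, bandlimited, hence real-analytic function on $\R^d$. From the pointwise identity $|h|=|g'|$ one obtains $h^2-(g')^2\equiv 0$, that is,
\[
(h-g')(h+g')\equiv 0 \quad\text{on }\R^d,
\]
where both factors $h-g'$ and $h+g'$ are real-analytic on the connected set $\R^d$.

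The crux will be the following elementary fact: the zero set of a real-analytic function on connected $\R^d$ that is not identically zero has Lebesgue measure zero. Consequently, if both $h-g'$ and $h+g'$ were nonzero, then the zero set of their product would be a union of two null sets, hence a null set---contradicting that the product vanishes on all of $\R^d$. Therefore $h-g'\equiv 0$ or $h+g'\equiv 0$, i.e. $g\ast\psi_\lambda^\ast=\pm(f\ast\psi_\lambda^\ast)\in\T\cdot(f\ast\psi_\lambda^\ast)$, using that $\T=\{+1,-1\}$ in the real case. Since $\lambda\in\Lambda$ was arbitrary, this shows that $f$ is locally sign retrievable.

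The only genuinely nontrivial ingredient is the passage from ``bandlimited'' to ``real-analytic with a measure-zero zero set''; once this is in place, the sign rigidity is immediate from the integral-domain property of real-analytic functions (equivalently, from the measure-zero zero-set statement). I expect no serious obstacle beyond carefully invoking Paley--Wiener to produce analyticity from compact Fourier support, and confirming that over $\F=\R$ the functions $f\ast\psi_\lambda^\ast$ and $g\ast\psi_\lambda^\ast$ are genuinely real-valued, which is precisely what allows the modulus identity $|h|=|g'|$ to be squared into the factorization $h^2=(g')^2$.
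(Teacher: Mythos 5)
Your proof is correct and takes essentially the same route as the paper: the paper's entire proof is the reduction to the rigidity fact that two real-valued bandlimited functions $g_1,g_2\in L^2(\R^d)$ with $|g_1|=|g_2|$ satisfy $g_1=\pm g_2$, which it cites to the literature rather than proving. You supply the standard proof of that cited fact---Paley--Wiener analyticity of $f\ast\psi_\lambda^\ast$ and $g\ast\psi_\lambda^\ast$, the factorization $(h-g')(h+g')\equiv 0$, and the integral-domain (measure-zero zero-set) property of real-analytic functions on connected $\R^d$---so your argument is a self-contained version of the same proof, with no gaps.
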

\begin{proof}
	This follows from the fact that for $g_1,g_2 \in L^2(\mathbb{R}^d)$ bandlimited and real-valued, satisfying $|g_1| = |g_2|$, one has $g_1 \in \{\pm g_2\}$, see e.g. \cite{akutowicz1957determination,alaifari2017reconstructing}.
\end{proof}

Hence, the sign retrieval problem fundamentally depends on the properties of the equivalence relation $\sim_f$. It is important to realize that this not only depends on $\widehat{f}$, but also on the supports of the $\widehat{\psi}_\lambda$ and their overlaps. We discuss a few general example classes to illustrate this phenomenon.

\begin{example}
	We consider real-valued \emph{Shannon wavelets} on $L^2(\mathbb{R})$, given by $\Psi = (\psi_j)_{j \in \mathbb{Z}}$ with 
	\[
	\widehat{\psi}_j = \mathds{1}_{[-2^{j},-2^{j-1}]} + \mathds{1}_{[2^{j-1},2^{j}]}.
	\] Then $\Psi$ satisfies the Calder\'on condition \textup{(C)} with counting measure $\nu$. Moreover, $f \in L^2(\R)$ is locally sign retrievable in $L^2(\R^d)$ by Lemma~\ref{lem:bl_sr}. Furthermore, the fact that $\psi_j \ast \psi_k = 0$ whenever $j \not= k$ implies that all $\sim_f$-equivalence classes are trivial, for all $f \in L^2(\R)$. 
	Accordingly, all solutions $g \in L^2(\R)$ to the sign retrieval problem $|W_\Psi f| = |W_\Psi g|$ are obtained by 
	picking an arbitrary subset $U \subset \mathbb{Z}$, and writing
	\[
	g_U = \sum_{j \in U} f \ast \psi_j^* \ast \psi_j - \sum_{k \in \mathbb{Z} \setminus U} f \ast \psi_k^* \ast \psi_k.
	\]
	
	Hence the sign retrieval problem is not uniquely solvable in $L^2(\R)$ even for nice functions. To see this point, consider the space of all compactly supported (real-valued) functions
	$L^2_c(\mathbb{R})$. For $0\neq f \in L^2_c(\mathbb{R})$, we have $\| f \ast\psi_j^* \|_2>0$ for all $j \in \mathbb{Z}$ since the zeros of $\widehat{f}$ are isolated. As a consequence, the map $\mathbb{Z} \supset U \mapsto g_U$ is injective, i.e., there exist uncountably many fundamentally different solutions to the sign retrieval problem for $f$. 
	
	It is worthwhile noting that the sign retrieval problem for $f$ becomes uniquely solvable if one imposes that the solution belongs to $L^2_c(\mathbb{R})$ as well. The reason for this is that the local phase retrieval property of the system allows to reconstruct the restriction of $\widehat{f}$ on $\pm [2^{j-1},2^j]$ up to a local sign $\sigma_j$, and the fact that $\widehat{f}$ is analytic then fixes this sign globally, for all $j$. Note however that this conclusion was not provided by Lemma~\ref{lem:local_global} (d).
	
	A slight modification of this construction is obtained by introducing 
	\[
	\widehat{\phi}_j = \mathds{1}_{[-2^{j}(1+\epsilon),-2^{j-1}(1-\epsilon)]} + \mathds{1}_{[2^{j-1}(1-\epsilon),2^{j}(1+\epsilon)}]
	\]
	for $0< \epsilon<1$, and then letting
	\[
	\widehat{\psi}_j(\xi) = \frac{\widehat{\phi}_j(\xi)}{\sum_{k \in \mathbb{Z}} |\widehat{\phi}_k(\xi)|^2}.
	\] By construction, the system $\Psi$ satisfies the Calder\'on condition \textup{(C)}. Now for any $0\neq f \in L^2_c(\mathbb{R})$, we have $\| f \ast \psi_j \ast \psi_{j+1}\|_2 >0$ for all $j \in \mathbb{Z}$, and thus $\Lambda_f = \mathbb{Z}$ is a single $\sim_f$-equivalence class. Accordingly, $f$ is (uniquely) sign retrievable. 
\end{example}

The next class of constructions provides systems $\Psi$ with a high degree of redundancy. 
\begin{definition}
	Let $H \leq \GL_d(\R)$ denote an \emph{irreducibly admissible matrix group}, i.e., there exists a unique open set $\mathcal{O}\subset \R^d$ (which necessarily is of full measure) such that $\mathcal{O} = H^T \xi_{\Ocal}$ for a suitable $\xi_\Ocal \in \mathbb{R}^d$, and in addition, the dual stabilizer 
	\[
	H_{\xi_\Ocal} = \{ h \in H : h^T \xi_\Ocal = \xi_\Ocal \} \subset H
	\] is compact. $\mathcal{O}$ is called \emph{open dual orbit} of $H$. 
\end{definition}

\begin{remark}\label{rm: wavelet filters from admissible matrix group}
	Whenever $H$ is an irreducibly admissible matrix group with open dual orbit $\mathcal{O}$, one may pick any $0\neq \psi \in L^2(\mathbb{R}^d)$ such that $\widehat{\psi}$ is compactly supported inside $\mathcal{O}$, and define
	\[
	\Psi = (\psi_h)_{h \in H},\quad \psi_h = |{\rm det}(h)|^{-1} \,\psi(h^{-1}\,\cdot). 
	\]
	By renormalizing $\psi$, if necessary, one can then achieve
	\[
	\int_H |\widehat{\psi}(h^T \xi)|^2 \dmu_H (h) = 1 \quad \text{a.e. } \xi \in \R^d,
	\] and obtains that $\Psi$ satisfies the Calder\'on condition \textup{(C)}, with measure $\nu = \mu_H$, the left Haar measure on $H$. For details and additional background, we refer to \cite{fuehr2010generalized,fuehr2005abstract}. Note that our normalization of the wavelets, so that $h \mapsto \| \psi_h \|_1$ is constant, leads to a slightly different measure in the inversion formula compared with \cite{fuehr2005abstract}.
	
	One can show that $\mathcal{O} = -\mathcal{O}$, and this allows to choose a \emph{real-valued} bandlimited wavelet $\psi$, e.g., by picking any bandlimited wavelet $\psi_c$ and letting $\psi = {\rm Re}(\psi_c)$. We are therefore in a position to conclude local sign retrievability via Lemma~\ref{lem:bl_sr}. 
\end{remark}

We recall that, for $g \in L^p(\mathbb{R}^d)$, its (essential) support 
\begin{align*}
	\supp(g) = \R^d \setminus \textstyle{\bigcup} \bigl\{U\subset \R^d\,\big|\, U \text{ open},\, g= 0 \text{ a.e. on } U\bigr\}
\end{align*}
does not depend on the choice of representative, i.e., $\supp(g)=\supp(h)$ holds whenever $h=g$ a.e. Hence it is well-defined to speak of (topological) connectedness of $\supp(g)$. 

Motivated by \cite[Theorem~1]{alaifari2017reconstructing}, we end this section with a sufficient criterion for sign retrieval from wavelet transforms associated with irreducibly admissible matrix groups. Our criterion does not impose exponential decay of \(f\); cf.\ \cite[Remark~3]{alaifari2017reconstructing}. Moreover, it applies in arbitrary dimension (cf.\ \cite{alaifari2023phase}) and does not require the real-analyticity assumptions of \cite[Theorem~3.1]{alaifari2024unique}.

\begin{theorem}\label{thm: PR bandlimited wavelets, real case}
	Assume that the field of scalars is $\F=\R$. Let $H$ be an irreducibly admissible matrix group with associated open dual orbit $\Ocal$, and 
	bandlimited (real-valued) wavelets $\Psi = (\psi_h)_{h \in H}$ as described in Remark~\ref{rm: wavelet filters from admissible matrix group}. Let $0\not= f \in L^2(\mathbb{R}^d)$. 
	If the boundaries of $F:=\inter(\supp(\widehat{f}))$ and $V:=\inter(\supp(\widehat{\psi}))$ are null-sets, and if $F\cap \Ocal$ is connected, then $f$ is uniquely sign retrievable. 
\end{theorem}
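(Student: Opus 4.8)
The plan is to establish the two conditions of Corollary~\ref{cor: PR vs local PR} and read off (unique) sign retrievability from it. Condition~(a), local sign retrievability, comes for free: by Remark~\ref{rm: wavelet filters from admissible matrix group} we may take $\psi$ real-valued and bandlimited, so Lemma~\ref{lem:bl_sr} applies and \emph{every} $f\in L^2(\R^d)$ is locally sign retrievable. The whole burden therefore falls on condition~(b): I must rule out any partition $\Lambda_f=U\sqcup W$ into disjoint Borel unions of $\sim_f$-equivalence classes, both of positive $\mu_H$-measure.

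First I would move everything to the frequency side. Using $\widehat{\psi}_h(\xi)=\widehat\psi(h^T\xi)$, writing $S_f:=\{\widehat f\neq0\}$, and introducing for each $h\in H$ the open ``tile'' $T_h:=(h^T)^{-1}V$ (open, and contained in $\Ocal$ since $\Ocal$ is $H^T$-invariant), Plancherel together with the computation of $\approx_f$ in Lemma~\ref{lem:local_global} yields the measure-theoretic dictionary $h\in\Lambda_f\iff\mu(S_f\cap T_h)>0$ and $h\approx_f h'\iff\mu(S_f\cap T_h\cap T_{h'})>0$, valid up to the null set coming from $\partial V$ (here I use that $\widehat\psi\neq0$ a.e.\ on $V$). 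The geometric heart is a bridge between measure and topology: for any open $O\subset\R^d$ one has $\mu(S_f\cap O)>0$ if and only if $O\cap F\neq\emptyset$. One direction is just the definition of essential support, since $O\cap F$ is a nonempty open subset of $\inter\supp(\widehat f)$ on which $\widehat f$ cannot vanish a.e.; the converse uses that, up to the null set $\supp(\widehat f)\setminus F$ controlled by the hypothesis $\mu(\partial F)=0$, the mass of $S_f$ sits inside $F$. Applying the bridge with $O=T_h$ and with $O=T_h\cap T_{h'}$ upgrades the dictionary to its purely topological form: $h\in\Lambda_f\iff T_h\cap F\neq\emptyset$ and $h\approx_f h'\iff T_h\cap T_{h'}\cap F\neq\emptyset$.

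With this the disconnection argument is short. Since $\Ocal$ is a single $H^T$-orbit, every $\xi\in\Ocal$ satisfies $H^T\xi=\Ocal\supset V$, so $\xi\in T_h$ for some $h\in H$; hence $\bigcup_{h\in H}T_h=\Ocal$, and by the topological dictionary $F\cap\Ocal=\bigcup_{h\in\Lambda_f}(T_h\cap F)$. Now assume toward a contradiction that a bad partition $\Lambda_f=U\sqcup W$ exists, and set $G_U:=\bigcup_{h\in U}(T_h\cap F)$ and $G_W:=\bigcup_{h\in W}(T_h\cap F)$. These are open, cover $F\cap\Ocal$, and are nonempty because each $h\in\Lambda_f$ yields $T_h\cap F\neq\emptyset$. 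They are disjoint: a point of $G_U\cap G_W$ would lie in $T_h\cap T_{h'}\cap F$ for some $h\in U$, $h'\in W$, forcing $h\approx_f h'$ and hence $h\sim_f h'$, which contradicts that $U$ and $W$ are distinct unions of $\sim_f$-classes. Thus $F\cap\Ocal=G_U\sqcup G_W$ would be a nontrivial splitting into nonempty open sets, contradicting the connectedness of $F\cap\Ocal$. Therefore no bad partition exists, condition~(b) holds, and Corollary~\ref{cor: PR vs local PR} gives that $f$ is phase retrievable; in the real case this is exactly unique sign retrievability.

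I expect the main obstacle to be the measure-to-topology bridge and, more precisely, the bookkeeping of essential supports versus their interiors. The two null-set hypotheses ($\partial F$ and $\partial V$ null) are precisely what lets one disregard the discrepancies between $\{\widehat f\neq0\}$ and $F$, and between $\{\widehat\psi\neq0\}$ and $V$; making the equivalence ``$\mu(S_f\cap O)>0\iff O\cap F\neq\emptyset$'' and the nonemptiness of $G_U,G_W$ fully rigorous is where the care is needed. By contrast, the frequency-domain computation of $\approx_f$, the single-orbit covering $\bigcup_h T_h=\Ocal$, and the final topological contradiction are routine.
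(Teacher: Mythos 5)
Your proposal matches the paper's proof essentially step for step: local sign retrievability via Lemma~\ref{lem:bl_sr}, the frequency-side dictionary identifying $\Lambda_f$ and $\approx_f$ with nonempty intersections of the open tiles $h^{-T}V$ with $F$, the orbit covering $\bigcup_{h\in H} h^{-T}V = \Ocal$, and the connectedness argument feeding into Corollary~\ref{cor: PR vs local PR}. The only cosmetic difference is that the paper concludes $\Lambda_f$ is a single $\sim_f$-equivalence class whereas you refute a two-block partition directly, and the measure-to-topology bridge you rightly flag as the delicate point is exactly the step the paper dispatches with ``one quickly finds''.
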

\begin{proof}
	Observe that in this setting $\Lambda=H$, so formally $\Lambda_f=H_f$.
	
	The proof is based on the idea that if $F\cap \Ocal$ is connected, then $H_f$ consists of a single $\sim_f$-equivalence class. Applying Corollary~\ref{cor: PR vs local PR} and Lemma~\ref{lem:bl_sr} then concludes the proof of this theorem.
	
	Let us now show that $H_f$ consists of a single $\sim_f$-equivalence class. To this end, first recall that $\widehat{\psi}$ is (compactly) supported in $\Ocal=H^T\xi_\Ocal$. Hence, $V=H_V^T\xi_\Ocal$ for some non-empty subset $H_V\subset H$, implying that $\Ocal$ can be covered by the sets $V_h:=h^{-T}V=\inter(\supp(\widehat{\psi}_h))$, $h\in H$. By Lemma~\ref{lem:local_global}, $H_f$ is a union of $\sim_f$-equivalence classes $C$. Moreover, employing that the boundaries of $F$ and $V$ are null-sets, one quickly finds that $F\cap V_h$ is nonempty if and only if $h \in H_f$. Together,
	\begin{align*}
		F\cap \Ocal = \bigcup_{h\in H_f} (F\cap V_h) = \biguplus_{c\in C}\bigcup_{h\in c} (F\cap V_h).
	\end{align*}  
	But $F\cap \Ocal$ is connected and, for each equivalence class $c\in C$, the union $\bigcup_{h\in c} (F\cap V_h)$ is a nontrivial open subset thereof; hence, there can only be one such class.
\end{proof}

\begin{remark}
	Suppose that the open dual orbit $\Ocal$ is connected---for instance, this is the case when $H$ is connected. Then $F\cap \Ocal$ is connected if and only if $F$ is connected. Specifically, our theorem then applies to any (real-valued) compactly supported function $0\not= f \in L^2_c(\mathbb{R}^d)$. Indeed, $\widehat{f}$ is easily seen to be extended to a complex analytic function. By the identity theorem for complex-analytic functions in multiple variables, this extension cannot vanish on any nontrivial open subset of $\R^d$ unless it is identical zero. In particular, the zero-set of $\widehat{f}$ must have empty interior, entailing $\supp(\widehat{f})=\R^d=F$. 
\end{remark}

\section{A Kernel Cheeger Constant for Locally Stable Phase Retrieval in RKHSs}\label{sec: cheeger RKHS}
	In this section, we take a step back and consider the phase retrieval problem in a more general framework, assuming that the signals to be recovered belong to a reproducing kernel Hilbert space. We return in Section~\ref{sec: Cheeger const semi-discrete} to the more specific convolutional phase retrieval problem associated with generalized wavelet transforms.
	
	Let $(X,\Sigma,\mu)$ be a $\sigma$-finite 
	measure space. We assume that $\Hcal\subset \F^X$ is an RKHS that embeds isometrically into a (necessarily closed) subspace of $L^2(X):=L^2(X,\Sigma,\mu)$. That is, $\Hcal$ is a Hilbert space of functions $X\to \F$ such that, for each $x\in X$, the point evaluation $\Hcal\ni \hf \mapsto \hf(x)\in \F$ is a well-defined bounded linear functional, and the embedding $\iota: \Hcal \hookrightarrow L^2(X)$ is isometric. By Riesz' representation theorem, there exists $k_x\in \Hcal$ such that $\hf(x)=\ip{\hf,k_x}$ for all $\hf\in \Hcal$. The function $k_x$ is called the \emph{reproducing kernel for the point} $x$, and the function 
	\begin{align*}
		k:X\times X\to \F, \quad k(x,y)=\ip{k_y,k_x}
	\end{align*}
	is called the \emph{reproducing kernel for} $\Hcal$. Our minimal assumption on the reproducing kernel $k$ is its Borel-measurability. Note that we do not require any form of concentration or decay of $k$. 
	
	By construction, for all $x,y\in X$, 
	\begin{align*}
		k(x,y)=k_y(x)=\overline{\ip{k_x,k_y}}=\overline{k(y,x)}=\overline{k_x(y)}.
	\end{align*}
	Since $\iota : \Hcal \to L^2(X)$ is an isometric embedding, we shall, by the usual abuse of notation, identify $\Hcal$ with its image $\iota(\Hcal)\subset L^2(X)$ and regard elements of $\Hcal$ as (equivalence classes of) functions on $X$. Each $\hf\in \Hcal$ satisfies the pointwise \emph{reproducing formula}
	\begin{align*}
		\hf(x)=\ip{\hf,k_x}=\int_X \hf(y) \,k(x,y) \dmu (y)\quad \text{for $\mu$-a.e.\ } x\in X.
	\end{align*}
	In particular, the reproducing kernel gives rise to a well-defined integral operator 
	\begin{align}\label{eq: integral operator of kernel k}
		K : L^2(X)\to L^2(X), \quad
		\gf \mapsto K\gf := \int_X \gf(y)\,k(\cdot,y)\dmu(y).
	\end{align}
	Indeed, by the Cauchy-Schwarz inequality, $(K\gf)(x) = \ip{\gf,k_x}$ is well-defined for all $x\in X$, and one readily verifies that $K$ is the orthogonal projection onto $\Hcal$.
	
	Let us fix $0\neq \ff \in \Hcal$. We intend to derive a \emph{lower and upper} estimate for the local stability constant $C(\ff)$ in terms of the connectivity of $\ff$ with respect to the kernel $k$.
	For convenience of the reader, we recall that $C(\ff)\in [1,\infty]$ is the smallest constant 
	$C>0$ such that for all $\gf\in \Hcal$,
	\begin{align*}
		\inf_{\alpha \in \T} \norm{\ff-\alpha \gf}_2 \leq C \norm{|\ff|-|\gf|}_2.
	\end{align*}
	For any $S\in \Sigma$ and any $\hf\in L^2(X)$, we define the orthogonal projection of $\hf$ onto the subspace of functions restricted to the set $S$ by $P_S\hf:=\hf \cdot \mathds{1}_S$. 
	
	\begin{definition}\label{def: kernel Cheeger constant of f}
		Let $0\neq\ff\in\Hcal$, and let $K$ be the orthogonal projection onto $\Hcal$ from \eqref{eq: integral operator of kernel k}. The (kernel) Cheeger constant of $\ff$ is defined as 
		\begin{align}\label{eq: def kernel Cheeger const}
			\Cheeger_{\ff}:= \inf_{\substack{S \in \Sigma \\[1mm] 0<\norm{P_S\ff}_2^2\leq\frac{1}{2}\norm{\ff}_2^2}} \frac{\norm{[K,P_S]\ff}_2^2}{\norm{P_S\ff}_2^2} = \inf_{\substack{S \in \Sigma \\[1mm] \norm{P_S\ff}_2, \norm{P_{S^c}\ff}_2>0}} \frac{\norm{[K,P_S]\ff}_2^2}{\norm{P_S\ff}_2^2 \wedge \norm{P_{S^c}\ff}_2^2},
		\end{align}
		assuming at least one such set $S$ exists. Otherwise, we set $\Cheeger_\ff:=1$.
	\end{definition}
	The kernel Cheeger constant $\Cheeger_\ff$ provides a measure for the connectivity of the signal $\ff$ with respect to the kernel. In this form, its value critically depends on the phase of $\ff$. We will later also introduce a variant of this notion of connectivity that only depends on the phaseless measurements $|\ff|$. Corollary~\ref{cor: weighted kernel Cheeger est} below clarifies their relationship.
	
	\begin{remark}\label{rm: wlog F may assumed to be normalized}
		Note that both the local phase retrieval stability constant $C(\ff)$ and the kernel Cheeger constant $\Cheeger_\ff$ are invariant under replacing $\ff$ by $\ff/\norm{\ff}_2$. Furthermore, $\ff$ is phase retrievable if and only if $\ff/\norm{\ff}_2$ is phase retrievable. Thus, it is no loss of generality to assume that $\norm{\ff}_2 = 1$, wherever this may be helpful in the following.
	\end{remark}
	
	Denoting by $K^\perp$ the orthogonal projection onto the orthogonal complement $\Hcal^\perp$ of $\Hcal$ in $L^2(X)$, we have
	\[\norm{[K,P_S]\ff}_2=\norm{KP_S\ff-P_SK\ff}_2=\norm{(K-\id_{L^2(X)})(P_S\ff)}_2=\norm{K^\perp P_S\ff}_2\leq \norm{P_S\ff}_2.\]
	Therefore, $\Cheeger_{\ff}\in [0,1]$ is the largest constant $\Cheeger \in [0,1]$ such that for all $S\in \Sigma$,
	\[\norm{[K,P_S]\ff}_2^2 \geq \Cheeger \, \left(\norm{P_S\ff}_2^2 \wedge \norm{P_{S^c}\ff}_2^2\right).\]
	Moreover, the commutator norm in the numerator of \eqref{eq: def kernel Cheeger const} is symmetric in $S\in \Sigma$, since
	\begin{align*}
		[K,P_S]\ff=[K,\id_{L^2(X)}-P_{S^c}]\ff= [K,\id_{L^2(X)}]\ff-[K,P_{S^c}]\ff=-[K,P_{S^c}]\ff.
	\end{align*}
	This shows that $\norm{[K,P_S]\ff}_2=\norm{[K,P_{S^c}]\ff}_2$ and thereby entails that the infima in~\eqref{eq: def kernel Cheeger const} coincide. Alternative representations for this commutator norm are
	\begin{align}\label{eq: symmetry of commutator norm}
		\norm{[K,P_S]\ff}_2^2
		&= \int_X \left|\int_X \ff(y) \, k(x,y) \cdot \mathds{1}_{\Delta S}(x,y) \dmu(y)\right|^2 \dmu(x) \nonumber\\
		&=\norm{P_SKP_{S^c}\ff}_2^2+\norm{P_{S^c}KP_{S}\ff}_2^2,
	\end{align}
	where $\Delta S:=(S\times S^c) \cup (S^c \times S)$. 
	
	The kernel Cheeger constant has the expected behavior for functions that are approximately supported on disconnected sets.
	\begin{example}\label{ex: kernel Cheeger vs separation by means of shifts}
		Suppose that $X$ is a non-compact SLCA group and assume that $k$ is translation-invariant, i.e. for all $x,y,z\in X$, it holds that
		\[k(x+z,y+z)=k(x,y).\]
		Then, for any \(0\neq \gf,\hf\in\Hcal\), one has
		\[
		\Cheeger_{\gf+T_z\hf}\longrightarrow 0 \qquad \text{as } z\to\infty \text{ in } X,
		\]
		The proof is based on standard \(L^{2}\)-localization arguments, and we therefore defer the details to the appendix; see Proposition~\ref{prop: kernel Cheeger of G+T_zH conv to zero}.
	\end{example}
	
	We now collect auxiliary results that will be used to establish a lower bound for the local stability constant. The underlying idea is to evaluate the defining infimum~\eqref{eq: def kernel Cheeger const} on suitably chosen test functions $\gf_S$ for varying sets $S\in \Sigma$. We define
	\[\gf_S:=K(P_S\ff-P_{S^c}\ff)\in \Hcal.\] 
	First, we derive an upper bound for the \(L^{2}\)-difference of the phaseless measurements in terms of the commutator norm appearing in the numerator of \eqref{eq: def kernel Cheeger const}.
	\begin{lemma}\label{lem: aux lem upper bound denom RKHS}
		For the functions $\ff,\gf_S$ introduced above, we have
		\[\norm{|\ff|-|\gf_S|}_2^2 \leq 4 \norm{[K,P_{S}]\ff}_2^2.\]
	\end{lemma}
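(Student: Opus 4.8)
The plan is to exploit the fact that $\gf_S$ is, up to the projection $K$, a pointwise phase-twist of $\ff$ that leaves the modulus unchanged. Concretely, writing $\epsilon_S := \mathds{1}_S - \mathds{1}_{S^c}$ for the unimodular sign function associated with $S$, I would first observe that $P_S\ff - P_{S^c}\ff = \epsilon_S\,\ff$, so that $\gf_S = K(\epsilon_S\,\ff)$ and, crucially, $|\epsilon_S\,\ff| = |\ff|$ pointwise since $|\epsilon_S|\equiv 1$. This is the whole point of the sign choice in the definition of $\gf_S$: it replaces the truncation $P_S\ff$ by a modulus-preserving reflection.

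The central estimate is then the pointwise reverse triangle inequality $\big||a|-|b|\big|\le |a-b|$, valid over both $\R$ and $\C$, applied with $a=\epsilon_S\ff(x)$ and $b=K(\epsilon_S\ff)(x)$. Integrating over $X$ gives
\[
\norm{|\ff|-|\gf_S|}_2^2 = \norm{|\epsilon_S\ff|-|K(\epsilon_S\ff)|}_2^2 \le \norm{\epsilon_S\ff - K(\epsilon_S\ff)}_2^2 = \norm{K^\perp(\epsilon_S\ff)}_2^2,
\]
where $K^\perp = \id_{L^2(X)} - K$ is the orthogonal projection onto $\Hcal^\perp$.

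It remains to identify $K^\perp(\epsilon_S\ff)$ with the commutator norm. Since $\ff\in\Hcal$ we have $K^\perp\ff = 0$, and from the algebraic identity $\epsilon_S\ff = 2P_S\ff - \ff$ we obtain $K^\perp(\epsilon_S\ff) = 2\,K^\perp P_S\ff$. Combining this with the identity $\norm{[K,P_S]\ff}_2 = \norm{K^\perp P_S\ff}_2$ already recorded in the discussion following Definition~\ref{def: kernel Cheeger constant of f}, we conclude
\[
\norm{|\ff|-|\gf_S|}_2^2 \le \norm{K^\perp(\epsilon_S\ff)}_2^2 = 4\,\norm{K^\perp P_S\ff}_2^2 = 4\,\norm{[K,P_S]\ff}_2^2,
\]
which is the asserted bound.

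There is no substantial obstacle here; the argument is a two-line computation once the modulus-preserving reduction is spotted. The only points requiring care are that the reverse triangle inequality and the reduction $|\epsilon_S\ff|=|\ff|$ hold uniformly over both scalar fields, and that the factor $4$ is not an artifact but arises precisely because the phase twist flips the sign on $S^c$ rather than merely truncating to $S$ (equivalently, $\epsilon_S\ff = 2P_S\ff - \ff$ rather than $P_S\ff$), so that $K^\perp$ of the twist is \emph{twice} $K^\perp P_S\ff$.
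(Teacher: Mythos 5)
Your proof is correct, and every identity you invoke ($\gf_S=K(\epsilon_S\ff)$, $K^\perp\ff=0$, and $\norm{[K,P_S]\ff}_2=\norm{K^\perp P_S\ff}_2$) is indeed available in the paper's discussion following Definition~\ref{def: kernel Cheeger constant of f}. Your route differs from the paper's in packaging rather than in substance. The paper bounds $\bigl||\ff|-|\gf_S|\bigr|$ pointwise by the minimum $|\ff-\gf_S|\wedge|\ff+\gf_S|=2\bigl(|KP_{S^c}\ff|\wedge|KP_S\ff|\bigr)$, then splits the integral over $S$ and $S^c$, using the branch $2|KP_{S^c}\ff|$ on $S$ and $2|KP_S\ff|$ on $S^c$, and concludes via the two-term identity $\norm{[K,P_S]\ff}_2^2=\norm{P_SKP_{S^c}\ff}_2^2+\norm{P_{S^c}KP_{S}\ff}_2^2$ of \eqref{eq: symmetry of commutator norm}. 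Your sign-multiplier $\epsilon_S=\mathds{1}_S-\mathds{1}_{S^c}$ makes that region-wise branch selection automatic: since $\ff-\gf_S=2KP_{S^c}\ff$ and $\ff+\gf_S=2KP_S\ff$, the function $K^\perp(\epsilon_S\ff)$ equals $\ff-\gf_S$ on $S$ and $-(\ff+\gf_S)$ on $S^c$, so your single reverse-triangle-inequality bound $|K^\perp(\epsilon_S\ff)|$ coincides pointwise with the paper's region-wise choice. What your version buys is economy: one application of the reverse triangle inequality, no minimum over the two sign choices, no splitting of the integral, and only the simpler identity $\norm{[K,P_S]\ff}_2=\norm{K^\perp P_S\ff}_2$ is needed in place of \eqref{eq: symmetry of commutator norm}; like the paper's argument, it treats $\F=\R$ and $\F=\C$ uniformly.
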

	\begin{proof}
		The triangle inequality and the reproducing formula on $\Hcal$ yield the pointwise estimate 
		\begin{align*}
			||\ff|-|\gf_S|| &\leq |\ff-\gf_S| \wedge |\ff+\gf_S|\\
			&= |K(\ff-(P_S\ff-P_{S^c}\ff))|\wedge|K(\ff+(P_S\ff-P_{S^c}\ff))| \\
			&= 2 \left(|KP_{S^c}\ff|\wedge|KP_S\ff|\right).
		\end{align*}
		Employing \eqref{eq: symmetry of commutator norm} and using the bound $|KP_{S^c}\ff|$ on $S$ and $|KP_S\ff|$ on $S^c$ entails
		\begin{align*}
			\norm{|\ff|-|\gf_S|}_2^2 &\leq 4 \, \left(\int_S |(KP_{S^c}\ff)(x)|^2 \dmu(x) + \int_{S^c} |(KP_{S}\ff)(x)|^2 \dmu(x)\right) \\
			&= 4 \, \left(\norm{P_SKP_{S^c}\ff}_2^2+\norm{P_{S^c}KP_{S}\ff}_2^2\right) \\
			&= 4 \norm{[K,P_{S}]\ff}_2^2.
		\end{align*}
	\end{proof}
	The next lemma provides an explicit representation for the distance $\inf_{\alpha \in \T}\norm{\ff-\alpha \, \gf_S}_2$.
	\begin{lemma}\label{lem: aux lem simplified enum RKHS}
		For the functions $\ff,\gf_S$ introduced above, we have
		\[\inf_{\alpha \in \T}\norm{\ff-\alpha \, \gf_S}_2^2=4\,\Bigl(
		\bigl(\|P_S \ff\|_2^2 \wedge \|P_{S^c} \ff\|_2^2\bigr)
		- \|[K,P_S]\ff\|_2^2
		\Bigr)
		.\]
	\end{lemma}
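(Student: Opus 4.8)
The plan is to exploit that both $\ff$ and $\gf_S$ live in $\Hcal$ and differ only by a sign flip across a natural orthogonal splitting. Setting $a:=KP_S\ff$ and $b:=KP_{S^c}\ff$, both elements of $\Hcal$, the reproducing formula $K\ff=\ff$ together with $P_S+P_{S^c}=\id_{L^2(X)}$ gives $\ff=a+b$ and $\gf_S=a-b$. The phase minimization is then routine: for any $u,v\in L^2(X)$ one has $\inf_{\alpha\in\T}\norm{u-\alpha v}_2^2=\norm{u}_2^2+\norm{v}_2^2-2\abs{\ip{u,v}}$, since over the unit circle $\re(\bar\alpha\,\ip{u,v})$ is maximized at $\abs{\ip{u,v}}$ (in the real case the two admissible signs $\alpha\in\{\pm1\}$ yield the same conclusion). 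Applying the parallelogram identity to $\ff=a+b$ and $\gf_S=a-b$ collapses the first two terms to $\norm{\ff}_2^2+\norm{\gf_S}_2^2=2\norm{a}_2^2+2\norm{b}_2^2$, so everything reduces to evaluating $\ip{\ff,\gf_S}=\ip{a+b,a-b}=\norm{a}_2^2-\norm{b}_2^2+\bigl(\ip{b,a}-\ip{a,b}\bigr)$.

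The crux---and the one genuinely field-sensitive point---is to show that the cross term $\ip{a,b}=\ip{KP_S\ff,KP_{S^c}\ff}$ is real, so that $\ip{b,a}-\ip{a,b}=2i\,\im\ip{b,a}$ vanishes and $\ip{\ff,\gf_S}$ reduces to the real number $\norm{a}_2^2-\norm{b}_2^2$. I would establish this purely by orthogonality. The supports of $P_S\ff$ and $P_{S^c}\ff$ are disjoint, hence $\ip{P_S\ff,P_{S^c}\ff}=0$. Decompose orthogonally $P_S\ff=a+e$ and $P_{S^c}\ff=b+e'$ with $e:=K^\perp P_S\ff$ and $e':=K^\perp P_{S^c}\ff$ in $\Hcal^\perp$. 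Since $\ff\in\Hcal$ we have $e+e'=K^\perp\ff=0$, so $e'=-e$. Expanding $0=\ip{a+e,\,b-e}$ and using $a,b\perp e$ yields $\ip{a,b}=\norm{e}_2^2=\norm{K^\perp P_S\ff}_2^2=\norm{[K,P_S]\ff}_2^2$, which is real (indeed nonnegative). This identity is exactly what forces the imaginary part to cancel, and it also shows $\ip{b,a}=\overline{\ip{a,b}}=\ip{a,b}$, so the bracketed difference is zero.

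It then remains to assemble the pieces. With $\ip{\ff,\gf_S}=\norm{a}_2^2-\norm{b}_2^2\in\R$, we get $\abs{\ip{\ff,\gf_S}}=\abs{\norm{a}_2^2-\norm{b}_2^2}$, whence $\inf_{\alpha\in\T}\norm{\ff-\alpha\gf_S}_2^2=2(\norm{a}_2^2+\norm{b}_2^2)-2\,\abs{\norm{a}_2^2-\norm{b}_2^2}=4\,(\norm{a}_2^2\wedge\norm{b}_2^2)$, using the elementary identity $x+y-\abs{x-y}=2(x\wedge y)$. Finally, the same orthogonal splitting $P_S\ff=a+e$ gives $\norm{a}_2^2=\norm{P_S\ff}_2^2-\norm{e}_2^2=\norm{P_S\ff}_2^2-\norm{[K,P_S]\ff}_2^2$, and symmetrically $\norm{b}_2^2=\norm{P_{S^c}\ff}_2^2-\norm{[K,P_S]\ff}_2^2$ (using $\norm{K^\perp P_{S^c}\ff}_2=\norm{K^\perp P_S\ff}_2$, which follows from $e'=-e$). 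Substituting and factoring out the common $\norm{[K,P_S]\ff}_2^2$ turns $\norm{a}_2^2\wedge\norm{b}_2^2$ into $\bigl(\norm{P_S\ff}_2^2\wedge\norm{P_{S^c}\ff}_2^2\bigr)-\norm{[K,P_S]\ff}_2^2$, which is precisely the claimed formula. The only step requiring genuine care is the reality of the cross term; once that is in place, the remainder is bookkeeping with orthogonal projections and already-recorded commutator identities.
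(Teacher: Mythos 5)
Your proof is correct and follows essentially the same route as the paper's: both reduce the infimum over $\T$ to $\alpha=\pm1$ by showing the relevant cross inner product is real, and then convert $\norm{KP_S\ff}_2^2\wedge\norm{KP_{S^c}\ff}_2^2$ into the claimed form via the Pythagorean identity $\norm{KP_S\ff}_2^2=\norm{P_S\ff}_2^2-\norm{[K,P_S]\ff}_2^2$ together with the symmetry under $S\leftrightarrow S^c$ (your $e'=-e$ is exactly the paper's $[K,P_S]\ff=-[K,P_{S^c}]\ff$). The only presentational difference is how reality is established: you expand $0=\ip{P_S\ff,P_{S^c}\ff}$ in the orthogonal decomposition $L^2(X)=\Hcal\oplus\Hcal^\perp$, obtaining the slightly stronger identity $\ip{KP_S\ff,KP_{S^c}\ff}=\norm{[K,P_S]\ff}_2^2$, whereas the paper uses the projection property $\ip{K(\ff-2P_S\ff),K\ff}=\ip{\ff-2P_S\ff,\ff}$; both arguments rest on the same two facts, namely that $K$ is an orthogonal projection fixing $\ff$ and that $P_S\ff\perp P_{S^c}\ff$.
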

	\begin{proof}
		Writing 
		\[\ff-\alpha\, \gf_S=\ff-\alpha\,(KP_S\ff-KP_{S^c}\ff)=\ff+\alpha(\ff-2KP_S\ff)\] 
		shows that
		\[\norm{\ff-\alpha\, \gf_S}_2^2= \underbrace{\norm{\ff}_2^2+\norm{\ff-2KP_S\ff}_2^2}_{= \const} + 2 \re(\alpha\, \ip{\ff-2KP_S\ff,\ff}).\]
		Moreover, since $K$ is the orthogonal projection onto $\Hcal$, and since $P_S \perp P_{S^c}$, we have
		\begin{align*}
			\ip{\ff-2KP_S\ff,\ff}=\ip{K(\ff-2P_S\ff),K\ff}=\ip{\ff-2P_S\ff,\ff}
			=\norm{P_{S^c}\ff}_2^2-\norm{P_S\ff}_2^2.
		\end{align*}
		Specifically, this calculation shows that $\ip{\ff-2KP_S\ff,\ff}$ is real, implying that
		\[\norm{\ff-\alpha\, \gf_S}_2^2=\const + 2 \re(\alpha) \, \ip{\ff-2KP_S\ff,\ff}.\]
		Therefore, the minimum is attained at $\alpha\in \{\pm 1\}$. Employing the identity $\ff=KP_S\ff+KP_{S^c}\ff$, we arrive at
		\begin{align}\label{eq: identity for d(f,g)}
			\inf_{\alpha \in \T}\norm{\ff-\alpha \, \gf_S}_2^2 = \norm{\ff+\gf_S}_2^2\wedge \norm{\ff-\gf_S}_2^2
			=4 \left(\norm{KP_S\ff}_2^2\wedge \norm{KP_{S^c}\ff}_2^2\right).
		\end{align}
		Furthermore, 
		\[K^\perp P_S\ff=(\id_{L^2(X)}-K)P_S\ff=[P_S,K]\ff,\] 
		which yields
		\begin{align}\label{eq: KP_S vs P_S}
			\norm{KP_S\ff}_2^2=\norm{P_S\ff}_2^2-\norm{K^\perp P_S\ff}_2^2 = \norm{P_S\ff}_2^2 - \norm{[P_S,K]\ff}_2^2.
		\end{align}
		 Recalling the symmetry 
		\[\norm{[K,P_{S}]\ff}_2^2=\norm{[K,P_{S^c}]\ff}_2^2\]
		from \eqref{eq: symmetry of commutator norm}, and inserting \eqref{eq: KP_S vs P_S} into \eqref{eq: identity for d(f,g)} concludes the proof.
	\end{proof}
	
	We now state the postulated lower estimate for the local stability constant $C(\ff)$.
	
	\begin{theorem}\label{thm: RKHS PR lower Cheeger bound}
		Assume that the field of scalars is $\F=\R$ or $\F=\C$.
		For every phase retrievable signal $0\neq \ff \in \Hcal$, one has
		\[
		C(\ff) \geq 
		\sqrt{\Cheeger_{\ff}^{-1} - 1}.
		\]
	\end{theorem}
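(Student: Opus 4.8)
The plan is to exhibit, for every admissible set $S\in\Sigma$, a concrete competitor $\gf_S$ whose modulus is close to $|\ff|$ but which remains far from the phase-orbit of $\ff$, and then to extract the stated lower bound on $C(\ff)$ by optimizing over $S$. The two auxiliary lemmas have already done the bulk of the work: Lemma~\ref{lem: aux lem upper bound denom RKHS} controls the right-hand side of the stability inequality \eqref{def: locally stable PR} from above by $4\norm{[K,P_S]\ff}_2^2$, while Lemma~\ref{lem: aux lem simplified enum RKHS} gives an exact formula for the left-hand side, namely
\[
\inf_{\alpha\in\T}\norm{\ff-\alpha\,\gf_S}_2^2
= 4\Bigl(\bigl(\norm{P_S\ff}_2^2\wedge\norm{P_{S^c}\ff}_2^2\bigr)-\norm{[K,P_S]\ff}_2^2\Bigr).
\]

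First I would fix any $S\in\Sigma$ with $\norm{P_S\ff}_2,\norm{P_{S^c}\ff}_2>0$ (if no such $S$ exists then $\Cheeger_\ff=1$ and the claimed bound reads $C(\ff)\geq 0$, which is vacuous). Since $\ff$ is phase retrievable and $\gf_S\in\Hcal$, plugging $\gf_S$ into the defining inequality for $C(\ff)$ yields
\[
\inf_{\alpha\in\T}\norm{\ff-\alpha\,\gf_S}_2^2
\leq C(\ff)^2\,\norm{|\ff|-|\gf_S|}_2^2
\leq 4\,C(\ff)^2\,\norm{[K,P_S]\ff}_2^2,
\]
using Lemma~\ref{lem: aux lem upper bound denom RKHS} in the second step. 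Combining this with the exact expression from Lemma~\ref{lem: aux lem simplified enum RKHS} and cancelling the common factor $4$ gives
\[
\bigl(\norm{P_S\ff}_2^2\wedge\norm{P_{S^c}\ff}_2^2\bigr)-\norm{[K,P_S]\ff}_2^2
\leq C(\ff)^2\,\norm{[K,P_S]\ff}_2^2,
\]
so that, writing $m_S:=\norm{P_S\ff}_2^2\wedge\norm{P_{S^c}\ff}_2^2>0$ and $c_S:=\norm{[K,P_S]\ff}_2^2$, we obtain $m_S\leq (1+C(\ff)^2)\,c_S$, i.e. $c_S/m_S\geq (1+C(\ff)^2)^{-1}$.

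Finally I would take the infimum over all admissible $S$. By the definition \eqref{eq: def kernel Cheeger const} of the kernel Cheeger constant, $\Cheeger_\ff=\inf_S c_S/m_S$, so the uniform bound above passes to the infimum to give $\Cheeger_\ff\geq (1+C(\ff)^2)^{-1}$, equivalently $C(\ff)^2\geq \Cheeger_\ff^{-1}-1$, and hence $C(\ff)\geq\sqrt{\Cheeger_\ff^{-1}-1}$. The only genuinely delicate point is ensuring the argument is not circular or vacuous: one must confirm that $\gf_S$ is an admissible test function (it lies in $\Hcal$ by construction, since $K$ projects onto $\Hcal$) and that phase retrievability of $\ff$ is exactly what licenses the use of a finite $C(\ff)$ in the inequality; if $C(\ff)=\infty$ the bound is trivial, so no real obstacle arises there. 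Everything else is algebraic rearrangement, the substantive analytic content having been isolated into the two preceding lemmas and the commutator identities \eqref{eq: symmetry of commutator norm}.
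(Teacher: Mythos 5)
Your proof is correct and follows essentially the same route as the paper's: the same test functions $\gf_S$, the same two ingredients (Lemmas~\ref{lem: aux lem upper bound denom RKHS} and~\ref{lem: aux lem simplified enum RKHS}), and the definition of $C(\ff)$, followed by optimization over $S$. The only difference is cosmetic: by rearranging to $\norm{P_S\ff}_2^2\wedge\norm{P_{S^c}\ff}_2^2 \leq (1+C(\ff)^2)\,\norm{[K,P_S]\ff}_2^2$ and dividing by the (positive) left-hand side instead of by the commutator norm, you sidestep the paper's separate argument, via phase retrievability, that $\norm{[K,P_S]\ff}_2^2>0$.
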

	
	\begin{proof}
		Recall from Remark~\ref{rm: wlog F may assumed to be normalized} that, without loss of generality, we may assume \(\|\ff\|_{2}=1\).
		Let $S\in \Sigma$ with $0<\norm{P_S\ff}_2^2<\tfrac12$ and consider the test function $\gf = \gf_S\in \Hcal$, as above. (If that does not exist, then the inequality simply reads as $C(\ff)\geq 0$, which is trivial.) Since $\ff$ is phase retrievable, we must have \[\norm{[K,P_{S}]\ff}_2^2>0.\] 
		Otherwise, Lemma~\ref{lem: aux lem upper bound denom RKHS} would imply $|\ff|=|\gf_S|$, hence $\ff\in \T \cdot \gf_S$. However, this would contradict Lemma~\ref{lem: aux lem simplified enum RKHS} and the fact that
		\[\left(\norm{P_S\ff}_2^2\wedge\norm{P_{S^c}\ff}_2^2\right)>0.\] 
		Applying Lemmas~\ref{lem: aux lem upper bound denom RKHS} and \ref{lem: aux lem simplified enum RKHS}, and simplifying the resulting expression, we obtain
		\begin{align}\label{eq: proof kernel Cheeger inequ}
		\inf_{\alpha \in \T} \| \ff - \alpha \gf_S \|_2^2
		\geq 
		\left(\frac{\norm{P_S\ff}_2^2\wedge\norm{P_{S^c}\ff}_2^2}
		{\norm{[K,P_{S}]\ff}_2^2} - 1\right)\, \norm{|\ff| - |\gf_S|}_2^2.
		\end{align}
		Recalling both the definition of the local stability constant $C(\ff)$ and the kernel Cheeger constant $\Cheeger_{\ff}$, we conclude the desired estimate.
	\end{proof}
	Next, we introduce a variant of the kernel Cheeger constant $\Cheeger_\ff$ that only depends on the phaseless measurements of $|\ff|$ and compare this new measure of connectivity with $\Cheeger_\ff$. For a measurable and symmetric weight $\omega:X\times X \to \R_{\geq 0}$, we introduce the uniform $L^1$-bound
	\begin{align*}
		\norm{\omega}_{L^\infty(L^1)}:=\sup_{y\in X} \int_X |\omega(x,y)|\dmu (x) <\infty.
	\end{align*}
	Moreover, for a function $\hf\in L^2(X)$, we also define its weighted norm
	\begin{align}\label{eq: def of 2,w norm and embedding}
		\norm{\hf}_{2,\omega}^2:=\int_{X}\int_{X} |\hf(y)|^2\, \omega(x,y) \dmu(x)\dmu(y)\leq \norm{\omega}_{L^\infty(L^1)} \norm{\hf}_2^2 <\infty.
	\end{align}
	These definitions naturally give rise to the \emph{weighted kernel Cheeger constant}
	\[\Cheeger_{\ff,\omega}:=\inf_{\substack{S \in \Sigma \\[1mm] \norm{P_S\ff}_{2,\omega}^2,\norm{P_{S^c}\ff}_{2,\omega}^2 >0}} \frac{\int_X \int_X |\ff(y)|^2 \, \omega(x,y) \cdot \mathds{1}_{\Delta S}(x,y) \dmu(y) \dmu(x)}{\norm{P_S\ff}_{2,\omega}^2\wedge \norm{P_{S^c}\ff}_{2,\omega}^2},\]
	where $\Delta S=(S\times S^c) \cup (S^c \times S)$, assuming at least one such set $S$ exists and $\Cheeger_{\ff,\omega}:=\norm{\omega}_{L^\infty(L^1)}^{-2}$ otherwise. 
	\begin{corollary}\label{cor: weighted kernel Cheeger est}
		Assume that the field of scalars is $\F=\R$ or $\F=\C$.
		Let $\omega:X\times X \to \R_{\geq 0}$ be a measurable, symmetric weight satisfying the uniform $L^1$-bound $\norm{\omega}_{L^\infty(L^1)}<\infty$, and assume that $|k|\leq \omega$ $\mu$-a.e. 
		
		Then, for every signal $0\neq \ff \in \Hcal$,
		\[
		\Cheeger_\ff\leq \norm{\omega}_{L^\infty(L^1)}^2 \Cheeger_{\ff,\omega}.
		\]

		In particular, if $\ff$ is phase retrievable, one has
		\[
		C(\ff) \geq 
		\sqrt{\norm{\omega}_{L^\infty(L^1)}^{-2} \Cheeger_{\ff,\omega}^{-1} \,- \,1}.
		\]
	\end{corollary}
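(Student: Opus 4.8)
The plan is to derive the inequality $\Cheeger_\ff\leq \norm{\omega}_{L^\infty(L^1)}^2 \,\Cheeger_{\ff,\omega}$ by comparing the two Rayleigh-type quotients defining the kernel Cheeger constant $\Cheeger_\ff$ and its weighted counterpart $\Cheeger_{\ff,\omega}$, and then to obtain the stability bound by chaining this estimate with Theorem~\ref{thm: RKHS PR lower Cheeger bound}. Since both constants are infima over the same family of sets $S\in\Sigma$, it suffices to bound the $\ff$-quotient for each fixed admissible $S$ by the corresponding weighted quotient, up to the factor $\norm{\omega}_{L^\infty(L^1)}^2$.

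First I would handle the numerators. Using the representation of the commutator norm in \eqref{eq: symmetry of commutator norm}, namely
\[
\norm{[K,P_S]\ff}_2^2=\int_X \left|\int_X \ff(y)\,k(x,y)\,\mathds{1}_{\Delta S}(x,y)\dmu(y)\right|^2\dmu(x),
\]
I would apply the Cauchy--Schwarz inequality in the inner integral against the measure $|k(x,y)|\dmu(y)$, splitting $|k|=|k|^{1/2}\cdot|k|^{1/2}$. This yields the pointwise bound
\[
\left|\int_X \ff(y)\,k(x,y)\,\mathds{1}_{\Delta S}(x,y)\dmu(y)\right|^2
\leq \left(\int_X |k(x,y)|\,\mathds{1}_{\Delta S}(x,y)\dmu(y)\right)\left(\int_X |\ff(y)|^2\,|k(x,y)|\,\mathds{1}_{\Delta S}(x,y)\dmu(y)\right).
\]
The first factor is at most $\norm{k}_{L^\infty(L^1)}\leq\norm{\omega}_{L^\infty(L^1)}$ by the hypothesis $|k|\leq\omega$ and the definition of the $L^\infty(L^1)$-norm (the indicator only restricts the domain). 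Integrating in $x$ and using $|k|\leq\omega$ once more in the second factor, together with Tonelli's theorem, produces
\[
\norm{[K,P_S]\ff}_2^2\leq \norm{\omega}_{L^\infty(L^1)}\int_X\int_X |\ff(y)|^2\,\omega(x,y)\,\mathds{1}_{\Delta S}(x,y)\dmu(y)\dmu(x),
\]
which is $\norm{\omega}_{L^\infty(L^1)}$ times the numerator of the weighted quotient. The main obstacle is organizing this Cauchy--Schwarz split cleanly and making sure the indicator $\mathds{1}_{\Delta S}$, the bound $|k|\leq\omega$, and the $L^\infty(L^1)$-control are deployed in the right factor so that exactly one power of $\norm{\omega}_{L^\infty(L^1)}$ is extracted at the numerator.

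For the denominators I would compare $\norm{P_S\ff}_2^2$ with $\norm{P_S\ff}_{2,\omega}^2$. By the estimate in \eqref{eq: def of 2,w norm and embedding}, one has $\norm{P_S\ff}_{2,\omega}^2\leq \norm{\omega}_{L^\infty(L^1)}\,\norm{P_S\ff}_2^2$, and likewise for $S^c$; equivalently $\norm{P_S\ff}_2^2\geq \norm{\omega}_{L^\infty(L^1)}^{-1}\norm{P_S\ff}_{2,\omega}^2$. Taking minima over $S$ and $S^c$ preserves this, so the $\ff$-denominator dominates $\norm{\omega}_{L^\infty(L^1)}^{-1}$ times the weighted denominator. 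Combining the numerator and denominator estimates for each admissible $S$ gives
\[
\frac{\norm{[K,P_S]\ff}_2^2}{\norm{P_S\ff}_2^2\wedge\norm{P_{S^c}\ff}_2^2}
\leq \norm{\omega}_{L^\infty(L^1)}^2\,
\frac{\int_X\int_X |\ff(y)|^2\,\omega(x,y)\,\mathds{1}_{\Delta S}(x,y)\dmu(y)\dmu(x)}{\norm{P_S\ff}_{2,\omega}^2\wedge\norm{P_{S^c}\ff}_{2,\omega}^2}.
\]
Here I should check that the set of $S$ admissible for $\Cheeger_\ff$ (those with $\norm{P_S\ff}_2,\norm{P_{S^c}\ff}_2>0$) coincides with those admissible for $\Cheeger_{\ff,\omega}$ (those with $\norm{P_S\ff}_{2,\omega},\norm{P_{S^c}\ff}_{2,\omega}>0$) whenever $\omega$ is strictly positive where $\ff$ lives; more robustly, I would note that the two-sided comparison of norms forces $\norm{P_S\ff}_2>0$ exactly when $\norm{P_S\ff}_{2,\omega}>0$, so the admissible families agree and passing to the infimum over $S$ yields $\Cheeger_\ff\leq\norm{\omega}_{L^\infty(L^1)}^2\,\Cheeger_{\ff,\omega}$. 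I would also dispatch the degenerate case where no admissible $S$ exists by checking consistency with the convention $\Cheeger_\ff:=1$ and $\Cheeger_{\ff,\omega}:=\norm{\omega}_{L^\infty(L^1)}^{-2}$, which makes the inequality an equality. Finally, inverting the bound gives $\Cheeger_\ff^{-1}\geq \norm{\omega}_{L^\infty(L^1)}^{-2}\Cheeger_{\ff,\omega}^{-1}$, and substituting into the conclusion $C(\ff)\geq\sqrt{\Cheeger_\ff^{-1}-1}$ of Theorem~\ref{thm: RKHS PR lower Cheeger bound} yields the stated stability estimate.
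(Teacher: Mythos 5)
Your proof is correct and follows essentially the same route as the paper: Cauchy--Schwarz together with $|k|\leq\omega$ bounds the commutator numerator by $\norm{\omega}_{L^\infty(L^1)}$ times the weighted numerator, the embedding \eqref{eq: def of 2,w norm and embedding} handles the denominators, and chaining with Theorem~\ref{thm: RKHS PR lower Cheeger bound} gives the stability estimate. One small caveat: your appeal to a ``two-sided comparison of norms'' is overstated (no general lower bound $\norm{P_S\ff}_{2,\omega}^2\gtrsim\norm{P_S\ff}_2^2$ holds), but this is harmless, since passing to the infima only requires the inclusion ``$S$ admissible for $\Cheeger_{\ff,\omega}$ $\Rightarrow$ $S$ admissible for $\Cheeger_\ff$'', which follows from the one-sided denominator estimate you already proved.
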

	\begin{proof}
		Let $S\in \Sigma$. On the one hand, \eqref{eq: symmetry of commutator norm} yields
		\begin{align*}
			\norm{[K,P_S]\ff}_2^2 &= \int_X \left|\int_X \ff(y) \, k(x,y) \cdot \mathds{1}_{\Delta S}(x,y) \dmu(y)\right|^2 \dmu(x) \\
			&\leq \int_X \left(\int_X |\ff(y)| \, \omega(x,y) \cdot \mathds{1}_{\Delta S}(x,y) \dmu(y)\right)^2 \dmu(x) \\
			&\leq \int_X \left(\int_X |\ff(y)|^2 \, \omega(x,y) \cdot \mathds{1}_{\Delta S}(x,y) \dmu(y)\right) \, \left(\int_X \omega(x,y) \dmu(y)\right) \dmu(x) \\
			&\leq \norm{\omega}_{L^\infty(L^1)} \, \int_X \int_X |\ff(y)|^2 \, \omega(x,y) \cdot \mathds{1}_{\Delta S}(x,y) \dmu(y) \dmu(x).
		\end{align*}
		On the other, \eqref{eq: def of 2,w norm and embedding} entails
		\begin{align*}
			\left(\norm{P_S\ff}_2^2\wedge\norm{P_{S^c}\ff}_2^2\right) \geq 	\norm{\omega}_{L^\infty(L^1)}^{-1}\, \left(\norm{P_S\ff}_{2,\omega}^2\wedge \norm{P_{S^c}\ff}_{2,\omega}^2\right).
		\end{align*}		
		Combining these inequalities with the definitions of $\Cheeger_\ff$ and $\Cheeger_{\ff,\omega}$ concludes the proof.
	\end{proof}
	\begin{remark}
		Observe that the assumptions of Corollary~\ref{cor: weighted kernel Cheeger est} impose the uniform $L^1$-bound $\norm{k}_{L^\infty(L^1)}<\infty$ on the reproducing kernel. Specifically, using $\omega=|k|$ yields, for every phase retrievable signal $0\neq \ff\in \Hcal$,
		\[
		C(\ff) \geq 
		\sqrt{\norm{k}_{L^\infty(L^1)}^{-2} \Cheeger_{\ff,|k|}^{-1} \,- \,1}.
		\]
	\end{remark}
	
	Let us now turn our attention towards proving an upper bound on the stability constant $C(\ff)$ under the additional assumption that the scalar field $\F$ is $\R$.
	The proof of our first lemma in this direction follows ideas from \cite{cheng2021stable}. 
	
	\begin{lemma}\label{lem: real RKHS commutator norm vs measurements distance}
		Assume that the field of scalars is $\F=\R$. Let $\ff,\hf\in \Hcal$, and define
		\[S:=\{x\in X\,|\, \ff(x)\hf(x)\geq 0\} \in \Sigma.\]
		Then, we have 
		\[\norm{[K,P_S]\ff}_2\leq \norm{|\ff|-|\hf|}_2.\]
	\end{lemma}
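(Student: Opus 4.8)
The plan is to collapse the commutator norm to the norm of a single orthogonal projection and then to exploit the freedom of subtracting an arbitrary element of $\Hcal$. Since $\ff\in\Hcal$ we have $K\ff=\ff$, so
\[
[K,P_S]\ff = KP_S\ff - P_S K\ff = KP_S\ff - P_S\ff = -(\id_{L^2(X)}-K)P_S\ff = -K^\perp P_S\ff,
\]
and hence $\norm{[K,P_S]\ff}_2=\norm{K^\perp P_S\ff}_2$, where $K^\perp:=\id_{L^2(X)}-K$ is the orthogonal projection onto $\Hcal^\perp$. Two properties of $K^\perp$ will drive the argument: it is a contraction, $\norm{K^\perp v}_2\le\norm{v}_2$, and it annihilates $\Hcal$.

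The key idea is that, because $K^\perp$ kills $\Hcal$, for \emph{any} $v\in\Hcal$ we have $K^\perp P_S\ff = K^\perp(P_S\ff-v)$, and therefore $\norm{K^\perp P_S\ff}_2\le \norm{P_S\ff-v}_2$ by contractivity. It thus suffices to produce a single element of $\Hcal$ that approximates $P_S\ff$ to within the measurement distance. In the real case the natural candidate is
\[
v:=\tfrac12(\ff+\hf)\in\Hcal,
\]
which lies in $\Hcal$ since $\Hcal$ is a real vector space. First I would read off $P_S\ff-v$ pointwise: on $S$, where $P_S\ff=\ff$, it equals $\tfrac12(\ff-\hf)$, whereas on $S^c$, where $P_S\ff=0$, it equals $-\tfrac12(\ff+\hf)$.

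The final step is the sign bookkeeping, which is where $\F=\R$ enters decisively and which I expect to be the only genuine subtlety. By definition of $S$, on $S$ the reals $\ff(x)$ and $\hf(x)$ have the same sign, so $|\ff-\hf|=\bigl||\ff|-|\hf|\bigr|$ there; on $S^c$ they have opposite signs, so $|\ff+\hf|=\bigl||\ff|-|\hf|\bigr|$. These identities persist at the degenerate points where $\ff$ or $\hf$ vanishes, since then one of the two moduli is zero. Combining,
\[
\norm{P_S\ff-v}_2^2=\tfrac14\int_S |\ff-\hf|^2\dmu+\tfrac14\int_{S^c}|\ff+\hf|^2\dmu=\tfrac14\norm{|\ff|-|\hf|}_2^2,
\]
whence $\norm{[K,P_S]\ff}_2\le \tfrac12\norm{|\ff|-|\hf|}_2\le \norm{|\ff|-|\hf|}_2$. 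The computation in fact produces the sharper constant $\tfrac12$, and the stated bound follows a fortiori. The essential point is that the dichotomy of only two admissible signs is what lets the single comparison function $\tfrac12(\ff+\hf)$ realize the modulus difference exactly; this device does not survive the passage to $\F=\C$, which is why the companion upper bound is confined to the real field.
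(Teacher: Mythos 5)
Your proof is correct, and it rests on the same two pillars as the paper's argument: the real sign dichotomy encoded in $S$ (on $S$ one has $|\ff-\hf|=\bigl||\ff|-|\hf|\bigr|$, on $S^c$ one has $|\ff+\hf|=\bigl||\ff|-|\hf|\bigr|$) and contractivity of an orthogonal projection. The execution differs, however, and yours is quantitatively sharper. The paper first rewrites the commutator via $\zeta_S:=\mathds{1}_S-\mathds{1}_{S^c}$ as $\norm{[K,P_S]\ff}_2=\tfrac12\norm{K(\zeta_S\ff)-\zeta_S\ff}_2$ and then inserts $\pm\hf$ using the inequality $\norm{a+b}_2^2\leq 2\norm{a}_2^2+2\norm{b}_2^2$ together with $\norm{K(\zeta_S\ff)-\hf}_2=\norm{K(\zeta_S\ff-\hf)}_2\leq\norm{\zeta_S\ff-\hf}_2$; this triangle-inequality step costs a factor $2$ and produces exactly the stated constant $1$. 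You instead use the identity $[K,P_S]\ff=-K^\perp P_S\ff$ (which the paper itself records just after Definition~\ref{def: kernel Cheeger constant of f}) and exploit that $K^\perp$ annihilates $\Hcal$ to subtract $v=\tfrac12(\ff+\hf)$ \emph{before} applying contractivity. Since $\zeta_S\ff=2P_S\ff-\ff$, your comparison vector satisfies $P_S\ff-v=\tfrac12(\zeta_S\ff-\hf)$, i.e.\ it is literally half of the paper's, and no triangle inequality is needed; this yields $\norm{[K,P_S]\ff}_2\leq\tfrac12\norm{|\ff|-|\hf|}_2$, a factor-$2$ improvement over the lemma as stated. The gain is not merely cosmetic: substituting your sharper bound into the final estimate in the proof of Theorem~\ref{thm: RKHS SR upper bound} improves the stability bound from $C(\ff)\leq 2\sqrt{\Cheeger_\ff^{-1}-1}+1$ to $C(\ff)\leq \sqrt{\Cheeger_\ff^{-1}-1}+1$.
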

	\begin{proof}
		We define $\zeta_S:=\mathds{1}_S-\mathds{1}_{S^c}$ and note that for all $x,y\in X$,
		\[\mathds{1}_{X\times S}(x,y) - \mathds{1}_{S\times X}(x,y)=\frac{1}{2} (\zeta_S(y)-\zeta_S(x)).\]
		A direct calculation shows that
		\begin{align*}
			\norm{[K,P_S]\ff}_2^2 &= \int_X \left|(KP_S\ff)(x)-(P_SK\ff)(x)\right|^2 \dmu(x) \nonumber\\
			&= \int_X \left|\int_X \ff(y) \, k(x,y) \, (\mathds{1}_{X\times S}(x,y)- \mathds{1}_{S\times X}(x,y)) \dmu(y)\right|^2 \dmu(x) \nonumber\\
			&= \frac{1}{4}\int_X \left|\int_X \ff(y) \, k(x,y) \, (\zeta_S(y)-\zeta_S(x))\dmu(y)\right|^2 \dmu(x) \nonumber\\
			&=\frac{1}{4} \norm{K(\zeta_S \ff)-\zeta_S \ff}_2^2.
		\end{align*}
		Inserting $\pm \hf$ and recalling the definition of $\zeta_S$ (respectively $S$), 
		we find that
		\begin{align}\label{eq: h vs zeta_S f}
			\norm{K(\zeta_S \ff)-\zeta_S \ff}_2^2 
			&\leq 2(\norm{K(\zeta_S \ff)-\hf}_2^2+\norm{\hf-\zeta_S \ff}_2^2) \nonumber\\
			&\leq 4 \norm{\zeta_S \ff-\hf}_2^2 \nonumber\\
			&=4\norm{|\ff|-|\hf|}_2^2.
		\end{align}
	\end{proof}
	Having established the necessary ingredients, we now state and prove the following general stability result.
	\begin{theorem}\label{thm: RKHS SR upper bound}
		Assume that the field of scalars is $\F=\R$. For every signal $0\neq \ff\in \Hcal$, one has
		\[C(\ff) \leq 2\,\sqrt{\Cheeger_\ff^{-1}-1} +1.\]
		In particular, if $\Cheeger_\ff>0$ then $\ff$ does locally stable sign retrieval.
	\end{theorem}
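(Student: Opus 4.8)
The plan is to establish the Lipschitz inequality directly: I fix an arbitrary competitor $0\neq\gf\in\Hcal$ and bound $\inf_{\alpha\in\T}\norm{\ff-\alpha\gf}_2$ (which, since $\F=\R$, equals $\norm{\ff-\gf}_2\wedge\norm{\ff+\gf}_2$) by a constant multiple of $\norm{|\ff|-|\gf|}_2$. The decisive idea is to compare the generic $\gf$ with the \emph{structured} auxiliary function $\gf_S=K(P_S\ff-P_{S^c}\ff)$ attached to the sign-agreement set $S:=\{x\in X:\ff(x)\gf(x)\geq 0\}$ of Lemma~\ref{lem: real RKHS commutator norm vs measurements distance}. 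The merit of $\gf_S$ is that the \emph{exact} distance formula of Lemma~\ref{lem: aux lem simplified enum RKHS} applies to it, whereas the error committed in replacing $\gf$ by $\gf_S$ is itself controlled by the measurement distance.

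First I would record the identity $\gf_S=K(\zeta_S\ff)$ with $\zeta_S=\mathds{1}_S-\mathds{1}_{S^c}$, together with the replacement estimate
\[\norm{\gf-\gf_S}_2=\norm{K(\gf-\zeta_S\ff)}_2\leq\norm{\gf-\zeta_S\ff}_2=\norm{|\ff|-|\gf|}_2.\]
Here the first equality uses $K\gf=\gf$ (as $\gf\in\Hcal$), the inequality uses that $K$ is an orthogonal projection and hence a contraction, and the final equality is the pointwise identity $|\zeta_S\ff-\gf|=\bigl||\ff|-|\gf|\bigr|$ already exploited in the proof of Lemma~\ref{lem: real RKHS commutator norm vs measurements distance} (on $S$ the signs of $\ff,\gf$ agree, on $S^c$ they are opposite). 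The triangle inequality then yields
\[\inf_{\alpha\in\T}\norm{\ff-\alpha\gf}_2\leq\inf_{\alpha\in\T}\norm{\ff-\alpha\gf_S}_2+\norm{\gf-\gf_S}_2\leq\inf_{\alpha\in\T}\norm{\ff-\alpha\gf_S}_2+\norm{|\ff|-|\gf|}_2,\]
where the second term already accounts for the additive ``$+1$'' in the stated constant.

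It remains to bound the first term, and this is where the refinement producing $\Cheeger_\ff^{-1}-1$ (rather than a cruder $\Cheeger_\ff^{-1}$) enters. Writing $m:=\norm{P_S\ff}_2^2\wedge\norm{P_{S^c}\ff}_2^2$, Lemma~\ref{lem: aux lem simplified enum RKHS} gives the exact value $\inf_{\alpha\in\T}\norm{\ff-\alpha\gf_S}_2^2=4\bigl(m-\norm{[K,P_S]\ff}_2^2\bigr)$. From the characterization of $\Cheeger_\ff$ as the largest constant with $\norm{[K,P_S]\ff}_2^2\geq\Cheeger_\ff\,m$ I obtain, on the one hand, $m-\norm{[K,P_S]\ff}_2^2\leq m(1-\Cheeger_\ff)$, and on the other hand $m\leq\Cheeger_\ff^{-1}\norm{[K,P_S]\ff}_2^2\leq\Cheeger_\ff^{-1}\norm{|\ff|-|\gf|}_2^2$, the last step invoking Lemma~\ref{lem: real RKHS commutator norm vs measurements distance}. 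Since $1-\Cheeger_\ff\geq 0$, multiplying gives
\[m-\norm{[K,P_S]\ff}_2^2\leq(1-\Cheeger_\ff)\,\Cheeger_\ff^{-1}\norm{|\ff|-|\gf|}_2^2=(\Cheeger_\ff^{-1}-1)\,\norm{|\ff|-|\gf|}_2^2,\]
whence $\inf_{\alpha\in\T}\norm{\ff-\alpha\gf_S}_2\leq 2\sqrt{\Cheeger_\ff^{-1}-1}\,\norm{|\ff|-|\gf|}_2$. Combining with the triangle inequality yields $\inf_{\alpha\in\T}\norm{\ff-\alpha\gf}_2\leq\bigl(2\sqrt{\Cheeger_\ff^{-1}-1}+1\bigr)\norm{|\ff|-|\gf|}_2$, which is exactly the asserted bound on $C(\ff)$; the final claim on stable sign retrieval is then immediate, since the right-hand side is finite whenever $\Cheeger_\ff>0$.

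The one point requiring care—and the place I expect the main obstacle in a fully rigorous write-up—is the degenerate situation in which $S$ (or $S^c$) carries no $\ff$-mass, so that $m=0$ and the Cheeger bound is unavailable. This case is benign: if, say, $\norm{P_{S^c}\ff}_2=0$, then $\zeta_S\ff=\ff$ $\mu$-a.e., so $\gf_S=K\ff=\ff$ and $\inf_{\alpha\in\T}\norm{\ff-\alpha\gf_S}_2=0$ (take $\alpha=1$); symmetrically with $\alpha=-1$ when $\norm{P_S\ff}_2=0$. In either case the first term vanishes and the triangle inequality alone delivers $\inf_{\alpha\in\T}\norm{\ff-\alpha\gf}_2\leq\norm{|\ff|-|\gf|}_2$, consistent with the claimed constant because $2\sqrt{\Cheeger_\ff^{-1}-1}+1\geq 1$. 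This also subsumes the convention $\Cheeger_\ff:=1$ when no admissible splitting set exists.
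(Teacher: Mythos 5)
Your proof is correct and follows essentially the same route as the paper's: the same test function $\gf_S=K(\zeta_S\ff)$ built from the sign-agreement set $S$, the same combination of Lemma~\ref{lem: aux lem simplified enum RKHS} and Lemma~\ref{lem: real RKHS commutator norm vs measurements distance}, and the same triangle-inequality split producing the additive $+1$. Your explicit treatment of the degenerate case $\norm{P_S\ff}_2\norm{P_{S^c}\ff}_2=0$ is a welcome extra precision (the paper absorbs it via the characterization of $\Cheeger_\ff$ as the largest constant valid for \emph{all} $S\in\Sigma$), but it does not change the argument.
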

	\begin{proof}
		Fix any $\hf\in \Hcal$.
		As before, we consider the set $S:=\{x\in X\,|\, \ff(x)\hf(x)\geq 0\} \in \Sigma\,$ and define
		$\zeta_S:=\mathds{1}_S-\mathds{1}_{S^c}$ as well as the test function
		\[\gf_S:=K(P_S\ff-P_{S^c}\ff)=K(\zeta_S \ff)\in \Hcal.\]
		Similar to \eqref{eq: h vs zeta_S f}, we have 
		\begin{align*}
			\left(\norm{\ff -\hf}_2\wedge \norm{\ff +\hf}_2\right)&\leq \left(\norm{\ff - \gf_S}_2 \wedge \norm{\ff + \gf_S}_2\right) + \norm{\gf_S-\hf}_2 \\
			&\leq  \left(\norm{\ff - \gf_S}_2 \wedge \norm{\ff + \gf_S}_2\right) + \norm{|\ff|-|\hf|}_2.
		\end{align*}
		Combining Lemma~\ref{lem: aux lem simplified enum RKHS} with Lemma~\ref{lem: real RKHS commutator norm vs measurements distance} and recalling the definition of the kernel Cheeger constant yields
		\begin{align*}
			\left(\norm{\ff - \gf_S}_2^2 \wedge \norm{\ff + \gf_S}_2^2\right)
			&=4\, \Bigl( \bigl(\norm{P_S\ff}_2^2\wedge\norm{P_{S^c}\ff}_2^2\bigr)-\norm{[K,P_{S}]\ff}_2^2\Bigr) \\
			&\leq 4 \,\bigl(\Cheeger_\ff^{-1}-1\bigr) \norm{[K,P_S]\ff}_2^2 \\
			&\leq 4 \,\bigl(\Cheeger_\ff^{-1}-1\bigr) \norm{|\ff|-|\hf|}_2^2.
		\end{align*}
		Altogether, it follows that
		\begin{align*}
			\left(\norm{\ff -\hf}_2\wedge \norm{\ff +\hf}_2\right)
			\leq \Bigl(2\,\sqrt{\Cheeger_\ff^{-1}-1} +1\Bigr)\,\norm{|\ff|-|\hf|}_2,
		\end{align*}
		which concludes the proof of the theorem.
	\end{proof}
	
	\begin{remark}
		It is natural to expect that analogues of the results in this section extend beyond the Hilbertian setting, for instance to reproducing kernel Banach spaces realized as subspaces of (weighted) \(L^{p}\)-spaces. Such extensions may, however, require additional structural assumptions on the kernel \(K\), and we do not pursue them here. 
	\end{remark}

	\section{From Kernel to Graph Cheeger Constants in the (Semi-)Discrete Setting}\label{sec: Cheeger const semi-discrete}
	
	In this section we return to the convolutional phase retrieval problem arising from generalized wavelet transforms, as discussed in Section~\ref{sec: PR conv}. Here, we additionally assume that the index set $\Lambda$ is countable and discrete, equipped with the counting measure $\nu$. The system $\Psi$ is assumed to satisfy the Calder\'on condition \textup{(C)}. 
	
	We show that the kernel Cheeger constant is closely related to the classical graph Cheeger constant of a suitably defined data-dependent weighted graph. To see this, first recall from Example \ref{ex: coherent state systems vs RKHS} that $\Hcal_\Psi:=W_\Psi(L^2(A))\subset L^2(A\times \Lambda)$ is an RKHS with associated orthogonal projection $K_\Psi=W_\Psi W_\Psi^\ast$ and reproducing kernel $k_\Psi$, for $((x,\lambda),(x^\prime,\lambda^\prime))\in (A\times \Lambda)^2$ given by
	\begin{align*}
		k_\Psi((x,\lambda),(x^\prime,\lambda^\prime)):=(\psi_{\lambda^\prime} * \psi_{\lambda}^*)(x-x^\prime)=\ip{T_{x^\prime}\psi_{\lambda^\prime},T_x\psi_\lambda}.
	\end{align*}
	Let us fix $0\neq f \in L^2(A)$. We intend to derive a \emph{lower} estimate for the local stability constant $C(f):=C(W_\Psi f)$ in terms of the connectivity properties of a suitable, data-dependent graph.
	Recalling from Theorem~\ref{thm: characterization of properties C and P} that the wavelet transform $W_\Psi$ is an isometry, the local stability constant $C(f)\in[1,\infty]$ is---in accordance with the definition from \eqref{def: locally stable PR}---the smallest constant $C>0$ such that for all $g\in L^2(A)$,
	\begin{equation*}
		\inf_{\alpha \in \T} \| f - \alpha g \|_{L^2(A)} \leq C \norm{| W_\Psi f | - |W_\Psi g| }_{L^2(A\times \Lambda)}.
	\end{equation*}
	We define a weighted graph $G=(V,E,w)$, where
	\begin{align}\label{eq: def of V and E}
		V = \{ \lambda \in \Lambda : \| f \ast \psi_\lambda \|_2 > 0 \} \quad \text{and} \quad E = \{ (\lambda,\lambda') \in V \times V\,:\, \| f \ast \psi_\lambda \ast \psi_{\lambda'} \|_2 > 0 \}.
	\end{align} 
	The associated weights are given by 
	\begin{align}\label{eq: def of graph weights}
		w_\lambda = \| f \ast \psi_\lambda \|_2^2 \quad (\lambda \in V)\,, \qquad  w_{\lambda,\lambda'} = \| f \ast \psi_\lambda \ast \psi_{\lambda'} \|_2^2 \quad ((\lambda,\lambda^\prime)\in E).
	\end{align}
	Observe that all quantities depend on $f$, even though we do not make this explicit in the notation. In view of the notation introduced in Definition~\ref{def: approx_f and sim_f}, $G=G(f)$ is the natural weighted symmetric graph induced by the relation $\approx_f$.
	
	We intend to compare the local stability constant $C(f)$ with the graph Cheeger constant of $G=G(f)$, defined next. Given any subset $S \subset V$, we let $S^c = V \setminus S$, and define its boundary 
	\[
	\partial S:=\{(\lambda,\lambda^\prime)\in E\,|\, \lambda\in S\text{ and }\lambda^\prime\in S^c\}.
	\]
	Note that $(\lambda,\lambda^\prime)\in\partial S$ if and only if $(\lambda^\prime,\lambda)\in\partial S^c$. Since the weights are symmetric, this entails 
	\[\sum_{(\lambda,\lambda') \in \partial S} w_{\lambda,\lambda'}=\sum_{(\lambda,\lambda^\prime) \in \partial S^c} w_{\lambda,\lambda'}.\]
	\begin{definition}\label{def: Cheeger constant of G(f)}
		Let $0\neq f\in L^2(A)$, and let $G(f)$ be the weighted graph induced by $f$ as described in \eqref{eq: def of V and E} and \eqref{eq: def of graph weights}. For $\# V>1$, the (graph) Cheeger constant of $G(f)$ is defined as 
		\begin{align}\label{def: graph Cheeger const}
			\Cheeger_{G(f)}:= \inf_{\substack{S \subset V \\ 0<\sum_{\lambda\in S}w_\lambda\leq\frac{1}{2}\norm{f}_2^2}} \frac{\sum_{(\lambda,\lambda') \in \partial S} w_{\lambda,\lambda'}}{\sum_{\lambda \in S} w_\lambda} = \inf_{\emptyset \not= S \subsetneq V} \frac{\sum_{(\lambda,\lambda') \in \partial S} w_{\lambda,\lambda'}}{\left(\sum_{\lambda \in S} w_\lambda\right) \wedge \left(\sum_{\lambda \in S^c} w_\lambda \right)}.
		\end{align}
		For $\# V=1$, we set $\Cheeger_{G(f)}:=1$.
	\end{definition}
	Note that the Calder\'on condition \textup{(C)} and Theorem~\ref{thm: characterization of properties C and P} imply that 
	\begin{align*}
		\sum_{\lambda \in V} w_\lambda = \norm{f}_2^2 \quad \text{and} \quad \forall \lambda' \in V\,:\, \sum_{\lambda \in V} w_{\lambda,\lambda'} = w_{\lambda'}.
	\end{align*} 
	Specifically, this entails that $\Cheeger_{G(f)}\leq 1$. Therefore, $\Cheeger_{G(f)}\in [0,1]$ is the largest constant $\Cheeger \in [0,1]$ such that for all $S \subset V$,
	\[\sum_{(\lambda,\lambda') \in \partial S} w_{\lambda,\lambda'}\geq \Cheeger \, \biggl(\Bigl(\sum_{\lambda \in S} w_\lambda\Bigr) \wedge \Bigl(\sum_{\lambda \in S^c} w_\lambda \Bigr)\biggr).\]
	\begin{remark}\label{rm: wlog f may assumed to be normalized}
		In analogy to Remark~\ref{rm: wlog F may assumed to be normalized}, we note that both the local phase retrieval stability constant $C(f)$ and the graph Cheeger constant $\Cheeger_{G(f)}$ are invariant under replacing $f$ by $f/\norm{f}_2$.
	\end{remark}
	The Cheeger constant provides a measure for how well connected the weighted graph $G(f)$ is. For convenience of the reader, we recall its relationship with the algebraic connectivity $\Afrak_{G(f)}$ of the graph $G=G(f)$, briefly summarizing the discussion in \cite[Section 2.5]{cheng2021stable}. For more details and background, we refer the reader to \cite{chung1996laplacians,cheeger2015lower,cheng2021stable}.
	\begin{remark}\label{rm: alg connect and Cheeger inequ}
		For any graph signal $z=(z_\lambda)_{\lambda\in V} \in \ell^2(V;(w_\lambda)_{\lambda\in V})$, define
		\[
		\Lcal_{G(f)}(z):=\sum_{(\lambda,\lambda^\prime)\in E}  w_{\lambda,\lambda^\prime} \, |z_\lambda-z_{\lambda^\prime}|^2.
		\]
		The algebraic connectivity $\Afrak_{G(f)}$ of $G(f)$ is
		\begin{align}\label{eq: alg connect}
			\Afrak_{G(f)}:=\inf_{z\neq 0, \ip{z,\mathds{1}}=0} 
			\frac{\Lcal_{G(f)}(z)}{\sum_{\lambda\in V} w_\lambda \, |z_\lambda|^2},
		\end{align}
		where $\ip{\,,\,}$ denotes the natural weighted inner product in $\ell^2(V;(w_\lambda)_{\lambda\in V})$, i.e.,
		\[
		\ip{z,\mathds{1}}=\sum_{\lambda\in V} z_\lambda \, w_{\lambda}.
		\]
		The algebraic connectivity is closely related to the minimum of the spectrum of the (normalized) graph Laplacian corresponding to $G$ on a suitable subspace of $\ell^2(V;(w_\lambda)_{\lambda\in V})$. If the graph is finite, the algebraic connectivity is strictly positive if and only if it is connected. If the graph is countably infinite, merely the only if part of this statement remains valid in general. In particular, if $G$ is disconnected then $\Afrak_{G(f)}=0$. 
		
		Denoting by $D_f:=\sup_{\lambda\in V} \#[\lambda]_{\approx_f}$ the (maximal) degree of $G(f)$, the Cheeger inequality reads
		\begin{align}\label{eq: graph Cheeger inequality}
			2 \Cheeger_{G(f)} \geq \Afrak_{G(f)} \geq \frac{\Cheeger_{G(f)}^2}{2D_f}.
		\end{align}
	\end{remark}
	The link between the local stability constant for the phase retrieval problem and the graph Cheeger constant is indirectly obtained by relating the kernel Cheeger constant $\Cheeger_{W_\Psi f}$ from the previous section with the graph Cheeger constant $\Cheeger_{G(f)}$. 
	\begin{lemma} \label{lem: properties of H_S}
		Let $V$ be the vertex set of $G(f)$ as defined in~\eqref{eq: def of V and E}. Then, for any subset $S\subset V$, the auxiliary function 
		\[
		H_S : \widehat{A} \to \R_{\geq 0}, \quad H_S(\xi) = \sum_{\lambda \in S} |\widehat{\psi}_\lambda(\xi)|^2 
		\]
		is measurable and a.e. bounded through $0 \le H_S \le 1$. Moreover, for all $h \in L^2(A)$,
		\begin{equation} \label{eqn:transfer}
			\sum_{\lambda \in S} \| h \ast \psi_\lambda \|_2^2 = \int_{\widehat{A}} |\widehat{h}(\xi)|^2  H_S(\xi) \dxi. 
		\end{equation}
		Finally, $H_S$ satisfies
		\[
		H_S(\xi) + H_{S^c}(\xi) = 1, \quad \text{a.e. } \xi \in\{\widehat{f}\neq 0\}, 
		\]
		as well as
		\[H_S(\xi)\,H_{S^c}(\xi)^2+H_S(\xi)^2\,H_{S^c}(\xi)=H_S(\xi)\,H_{S^c}(\xi), \quad \text{a.e. } \xi \in\{\widehat{f}\neq 0\}.\]
	\end{lemma}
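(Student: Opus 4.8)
The plan is to verify the five assertions in turn, each reducing to the Calder\'on condition \textup{(C)} together with Plancherel's theorem. Since $\Lambda$ is countable and carries the counting measure $\nu$ in this section, the integral in \textup{(C)} becomes a sum, so $H_S$ is a countable sum of the nonnegative measurable functions $|\widehat{\psi}_\lambda|^2$ (measurable because each $\psi_\lambda\in L^2(A)$ has a measurable Fourier transform); hence $H_S$ is measurable and nonnegative. For the upper bound I would use $S\subset V\subset\Lambda$ to estimate $H_S(\xi)\le \sum_{\lambda\in\Lambda}|\widehat{\psi}_\lambda(\xi)|^2$, which equals $1$ for a.e.\ $\xi$ by \textup{(C)}; thus $0\le H_S\le 1$ a.e.

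For the transfer identity the key ingredient is that the convolution and Plancherel theorems give, for each single $\lambda$, the pointwise frequency-side formula $\|h\ast\psi_\lambda\|_2^2=\int_{\widehat{A}}|\widehat{h}(\xi)|^2\,|\widehat{\psi}_\lambda(\xi)|^2\dxi$. Summing over $\lambda\in S$ and interchanging sum and integral by Tonelli's theorem---legitimate since every summand is nonnegative---yields $\sum_{\lambda\in S}\|h\ast\psi_\lambda\|_2^2=\int_{\widehat{A}}|\widehat{h}(\xi)|^2 H_S(\xi)\dxi$, as claimed.

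The main point, and the only step that requires genuine care, is the partition identity $H_S+H_{S^c}=1$ a.e.\ on $\{\widehat{f}\neq 0\}$. Here $H_S+H_{S^c}=\sum_{\lambda\in V}|\widehat{\psi}_\lambda|^2$, which differs from the full Calder\'on sum $\sum_{\lambda\in\Lambda}|\widehat{\psi}_\lambda|^2=1$ precisely by the contribution of the indices $\lambda\in\Lambda\setminus V$. I would show this contribution vanishes on $\{\widehat{f}\neq 0\}$: for each such $\lambda$, the definition of $V$ gives $\|f\ast\psi_\lambda\|_2=0$, so the transfer identity applied to the singleton $\{\lambda\}$ forces $\int_{\widehat{A}}|\widehat{f}|^2\,|\widehat{\psi}_\lambda|^2\dxi=0$, whence $|\widehat{\psi}_\lambda|^2=0$ a.e.\ on $\{\widehat{f}\neq 0\}$. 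The delicate part is that this is an individual a.e.\ statement for each $\lambda$; because $\Lambda\setminus V$ is \emph{countable}, the associated exceptional null sets union to a single null set, so $\sum_{\lambda\in\Lambda\setminus V}|\widehat{\psi}_\lambda|^2=0$ simultaneously a.e.\ on $\{\widehat{f}\neq 0\}$. Subtracting this from the Calder\'on identity gives $H_S+H_{S^c}=1$ a.e.\ there. The countability of $\Lambda$ is essential exactly at this step, as it is what keeps the exceptional set null.

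Finally, the pointwise algebraic identity is immediate: factoring the left-hand side as $H_S H_{S^c}\,(H_S+H_{S^c})$ and inserting $H_S+H_{S^c}=1$ from the previous step gives $H_S H_{S^c}^2+H_S^2 H_{S^c}=H_S H_{S^c}$ a.e.\ on $\{\widehat{f}\neq 0\}$, completing the proof.
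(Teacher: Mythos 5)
Your proof is correct, and most of it (measurability, the bound $0\le H_S\le 1$ via the Calder\'on condition, the transfer identity via Plancherel--convolution--Tonelli, and the final factoring $H_SH_{S^c}^2+H_S^2H_{S^c}=H_SH_{S^c}(H_S+H_{S^c})$) coincides with the paper's argument. Where you genuinely diverge is the partition identity $H_S+H_{S^c}=1$ a.e.\ on $\{\widehat f\neq 0\}$. The paper proves it by an \emph{integral comparison}: a chain of equalities (Calder\'on, Tonelli, Plancherel, and $\sum_{\lambda\in\Lambda}\|f\ast\psi_\lambda\|_2^2=\sum_{\lambda\in V}\|f\ast\psi_\lambda\|_2^2$) shows $\int_{\{\widehat f\neq 0\}}|\widehat f|^2\,\bigl(1-H_S-H_{S^c}\bigr)\dxi=0$, and since the integrand is nonnegative (because $H_S+H_{S^c}\le 1$ a.e.), it vanishes a.e. You instead argue \emph{pointwise per index}: for each $\lambda\in\Lambda\setminus V$, the vanishing of $\|f\ast\psi_\lambda\|_2$ forces $|\widehat\psi_\lambda|^2=0$ a.e.\ on $\{\widehat f\neq 0\}$, and countability of $\Lambda$ lets you union the exceptional null sets before subtracting from the Calder\'on identity. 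Both mechanisms are sound and of comparable length. Your version has the merit of making explicit exactly where countability of $\Lambda$ enters, and it yields the formally stronger pointwise conclusion $\sum_{\lambda\in\Lambda\setminus V}|\widehat\psi_\lambda|^2=0$ a.e.\ on $\{\widehat f\neq 0\}$; the paper's averaged argument has the advantage that it never isolates individual indices, so it would carry over verbatim to a continuous index set $(\Lambda,\nu)$ (where a union of null sets over uncountably many $\lambda$ would no longer be controllable), which is in keeping with the paper's broader semi-discrete/continuous framework.
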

	\begin{proof}
		The Calder\'on condition \textup{(C)} immediately entails that $0 \le H_S \le 1$ a.e. Clearly, $H_S$ is also measurable. Equation~\eqref{eqn:transfer} is obtained by a straightforward combination of Plancherel and convolution theorem, together with Tonelli's theorem. 
		
		For the remaining two identities, note that the second follows immediately from the first. To prove the first, it suffices to show that
		\begin{align}\label{eq: identity for H_S}
			|\widehat{f}(\xi)|^2 \, (1-	H_S(\xi) - H_{S^c}(\xi))=0, \quad \text{a.e. } \xi \in\{\widehat{f}\neq 0\}. 
		\end{align}
		Combining again the Calder\'on condition \textup{(C)} with the Plancherel theorem yields
		\begin{eqnarray*}
			\int_{\{\widehat{f}\neq 0\}} |\widehat{f}(\xi)|^2 \dxi
			& = & \int_{\{\widehat{f}\neq 0\}} |\widehat{f}(\xi)|^2 \sum_{\lambda \in \Lambda} |\widehat{\psi}_\lambda(\xi)|^2 \dxi \\
			& = & \sum_{\lambda \in \Lambda} \int_{\{\widehat{f}\neq 0\}} |\widehat{f}(\xi)|^2 |\widehat{\psi}_\lambda(\xi)|^2 \dxi \\
			& = & \sum_{\lambda \in \Lambda} \| f \ast \psi_\lambda \|_2^2\\
			& = & \sum_{\lambda \in V} \| f \ast \psi_\lambda \|_2^2\\
			& = & \int_{\{\widehat{f}\neq 0\}} |\widehat{f}(\xi)|^2 \sum_{\lambda \in V} |\widehat{\psi}_\lambda(\xi)|^2 \dxi \\
			& = &  \int_{\{\widehat{f}\neq 0\}} |\widehat{f}(\xi)|^2 (H_S(\xi) + H_{S^c}(\xi)) \dxi.
		\end{eqnarray*}
		This concludes \eqref{eq: identity for H_S} and hence the proof of this lemma.
	\end{proof}
	
	The next lemma provides the key observation linking the different notions of Cheeger constants. We emphasize that, in the present context, the relevant object for the application of the results from Section~\ref{sec: cheeger RKHS} is the wavelet transform $F:=W_\Psi f\in \Hcal_\Psi$, rather than $f$ itself.
	\begin{lemma}\label{lem: comm norm [K,P_{AxS}] vs graph weights}
		Let the notation be as above. For any subset $S\subset V$, we have 
		\begin{align*}
			\norm{[K_\Psi,P_{A\times S}] W_\Psi f}_{L^2(A\times \Lambda)}^2 = \sum_{(\lambda,\lambda') \in \partial S} w_{\lambda,\lambda'}
		\end{align*}
		and 
		\begin{align*}
			\norm{P_{A\times S} W_\Psi f}_{L^2(A\times \Lambda)}^2 
			= \sum_{\lambda\in S} w_\lambda.
		\end{align*}
	\end{lemma}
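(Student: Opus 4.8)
The plan is to reduce both identities to Fourier-domain integrals against the functions $H_S$ from Lemma~\ref{lem: properties of H_S}, exploiting that $W_\Psi$ is an isometry whose adjoint is the synthesis operator $W_\Psi^\ast F=\sum_{\lambda\in\Lambda}F(\cdot,\lambda)\ast\psi_\lambda$ (already used in the proof of Lemma~\ref{lem:local_global}(b), now with counting measure).

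First I would dispatch the second identity, which is immediate: since $P_{A\times S}$ acts by restriction in the $\lambda$-variable,
\[
\norm{P_{A\times S}W_\Psi f}_{L^2(A\times\Lambda)}^2=\sum_{\lambda\in S}\norm{f\ast\psi_\lambda^\ast}_2^2=\sum_{\lambda\in S}\norm{f\ast\psi_\lambda}_2^2=\sum_{\lambda\in S}w_\lambda,
\]
where the middle step uses $\norm{f\ast g}_2=\norm{f\ast g^\ast}_2$ noted after Definition~\ref{def: approx_f and sim_f}.

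For the commutator identity I would rewrite the left-hand side through the symmetric representation \eqref{eq: symmetry of commutator norm}, specialized to $S\rightsquigarrow A\times S$ (so that $(A\times S)^c=A\times S^c$), $\ff\rightsquigarrow W_\Psi f$, $K\rightsquigarrow K_\Psi$:
\[
\norm{[K_\Psi,P_{A\times S}]W_\Psi f}_{L^2(A\times\Lambda)}^2
=\norm{P_{A\times S}K_\Psi P_{A\times S^c}W_\Psi f}_2^2+\norm{P_{A\times S^c}K_\Psi P_{A\times S}W_\Psi f}_2^2.
\]
Writing $K_\Psi=W_\Psi W_\Psi^\ast$ and applying synthesis, the inner function $W_\Psi^\ast(P_{A\times S^c}W_\Psi f)=\sum_{\lambda\in S^c}(f\ast\psi_\lambda^\ast)\ast\psi_\lambda$ has Fourier transform $\widehat{f}\,H_{S^c}$ by the convolution theorem. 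Reapplying $W_\Psi$, restricting to $\lambda'\in S$, and invoking Plancherel together with \eqref{eqn:transfer} gives
\[
\norm{P_{A\times S}K_\Psi P_{A\times S^c}W_\Psi f}_2^2
=\int_{\widehat{A}}|\widehat{f}(\xi)|^2\,H_{S^c}(\xi)^2\,H_S(\xi)\dxi,
\]
and symmetrically for the other summand with $S$ and $S^c$ interchanged. Summing the two and applying the cubic identity $H_S H_{S^c}^2+H_S^2 H_{S^c}=H_S H_{S^c}$, valid a.e.\ on $\{\widehat{f}\neq 0\}$ by Lemma~\ref{lem: properties of H_S} (and harmless elsewhere, since the integrand carries the factor $|\widehat{f}|^2$), collapses the left-hand side to $\int_{\widehat{A}}|\widehat{f}|^2 H_S H_{S^c}\dxi$.

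Finally I would compute the right-hand side independently. Plancherel gives $w_{\lambda,\lambda'}=\int_{\widehat{A}}|\widehat{f}|^2|\widehat{\psi}_\lambda|^2|\widehat{\psi}_{\lambda'}|^2\dxi$; summing over $(\lambda,\lambda')\in\partial S$—equivalently over $\lambda\in S$ and $\lambda'\in S^c$, since weights of pairs outside $E$ vanish—and using Tonelli to factor the resulting double sum yields precisely $\int_{\widehat{A}}|\widehat{f}|^2 H_S H_{S^c}\dxi$, matching the collapsed commutator expression. The only point requiring genuine care is the bookkeeping in the commutator step: correctly tracking the action of $K_\Psi=W_\Psi W_\Psi^\ast$ through the synthesis operator and identifying which restriction ($S$ versus $S^c$) contributes which power of $H$; once this is organized, the algebraic simplification is supplied verbatim by Lemma~\ref{lem: properties of H_S}.
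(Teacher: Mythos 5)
Your proof is correct and follows essentially the same route as the paper's: both reduce the commutator norm to the two cross terms $\|P_{A\times S}K_\Psi P_{A\times S^c}W_\Psi f\|_2^2+\|P_{A\times S^c}K_\Psi P_{A\times S}W_\Psi f\|_2^2$, evaluate them in the Fourier domain as $\int_{\widehat{A}}|\widehat{f}|^2\bigl(H_SH_{S^c}^2+H_S^2H_{S^c}\bigr)\dxi$, collapse this via the cubic identity of Lemma~\ref{lem: properties of H_S}, and identify $\int_{\widehat{A}}|\widehat{f}|^2H_SH_{S^c}\dxi$ with $\sum_{(\lambda,\lambda')\in\partial S}w_{\lambda,\lambda'}$ by Plancherel and Tonelli. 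The only cosmetic difference is that you invoke the abstract representation \eqref{eq: symmetry of commutator norm} together with $K_\Psi=W_\Psi W_\Psi^\ast$, whereas the paper computes the commutator pointwise before passing to the Fourier side.
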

	\begin{proof}
		A direct calculation yields
		\begin{align*}
			K_\Psi P_{A\times S}W_\Psi f(x,\lambda)= \sum_{\lambda^\prime \in S} (f * \psi_\lambda^\ast * \psi_{\lambda^\prime}^\ast * \psi_{\lambda^\prime})(x)
		\end{align*}
		as well as 
		\begin{align*}
			P_{A\times S}K_\Psi W_\Psi f(x,\lambda) = \begin{cases}
				\sum_{\lambda^\prime \in V} (f * \psi_\lambda^\ast * \psi_{\lambda^\prime}^\ast * \psi_{\lambda^\prime})(x), &\lambda\in S \\
				0, &\lambda\in \Lambda\setminus S
			\end{cases}.
		\end{align*}
		Using the Plancherel theorem, the convolution theorem as well as Tonelli's theorem, it follows that
		\begin{eqnarray*}
			\norm{[K_\Psi,P_{A\times S}] W_\Psi f}_2^2 &= &\sum_{\lambda\in S} \norm{\sum_{\lambda^\prime \in S^c} f * \psi_\lambda^\ast * \psi_{\lambda^\prime}^\ast * \psi_{\lambda^\prime}}_{2}^2 + \sum_{\lambda\in S^c} \norm{\sum_{\lambda^\prime \in S} f * \psi_\lambda^\ast * \psi_{\lambda^\prime}^\ast * \psi_{\lambda^\prime}}_{2}^2\\
			& = & \sum_{\lambda \in S} \int_{\widehat{A}} |\widehat{f}(\xi)|^2 \, |\widehat{\psi}_{\lambda}(\xi)|^2 \left( \sum_{\lambda^\prime \in S^c} |\widehat{\psi}_{\lambda^\prime}(\xi)|^2 \right)^2 \dxi \\
			&\qquad + &\sum_{\lambda \in S^c} \int_{\widehat{A}} |\widehat{f}(\xi)|^2 \, |\widehat{\psi}_{\lambda}(\xi)|^2 \left( \sum_{\lambda^\prime \in S} |\widehat{\psi}_{\lambda^\prime}(\xi)|^2 \right)^2 \dxi\\
			& = & \int_{\widehat{A}} |\widehat{f}(\xi)|^2 \, (H_S(\xi)\,H_{S^c}(\xi)^2+H_S(\xi)^2\,H_{S^c}(\xi)) \dxi. 
		\end{eqnarray*}
		Applying Lemma~\ref{lem: properties of H_S} and recalling the definition of the weights, we conclude that 
		\[\norm{[K_\Psi,P_{A\times S}] W_\Psi f}_2^2 = \int_{\widehat{A}} |\widehat{f}(\xi)|^2 \, H_S(\xi) \,H_{S^c}(\xi)  \dxi = \sum_{(\lambda,\lambda') \in \partial S} w_{\lambda,\lambda'}.\]
		The second equation of the lemma follows immediately from the definition of the weights, since
		\begin{align*}
			\norm{P_{A\times S} W_\Psi f}_{L^2(A\times \Lambda)}^2 = \sum_{\lambda\in S} \norm{f*\psi_\lambda^*}_{L^2(A)}^2
			= \sum_{\lambda\in S} w_\lambda.
		\end{align*}
	\end{proof}
	
	We are now in a position to state and prove the main comparison between the kernel Cheeger constant $\Cheeger_{W_\Psi f}$ from Section~\ref{sec: cheeger RKHS} and the graph Cheeger constant $\Cheeger_{G(f)}$ introduced at the beginning of this section.
	\begin{theorem}\label{thm: kernel vs. graph Cheeger constant semi-discrete}
		Assume that the field of scalars is $\F=\R$ or $\F=\C$. 
		For every signal $0\neq f \in L^2(A)$, one has 
		\[\Cheeger_{W_\Psi f}\leq \Cheeger_{G(f)}.\]
		In particular, if $f$ is phase retrievable, then
		\[
		C(f) \geq 
		\sqrt{\Cheeger_{G(f)}^{-1} - 1}.
		\]
	\end{theorem}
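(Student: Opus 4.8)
The plan is to reduce the inequality $\Cheeger_{W_\Psi f}\le\Cheeger_{G(f)}$ to the observation that the infimum defining the kernel Cheeger constant ranges over a strictly richer family of test sets than the one defining the graph Cheeger constant. Applied to $F:=W_\Psi f\in\Hcal_\Psi$, the kernel Cheeger constant of Definition~\ref{def: kernel Cheeger constant of f} optimizes over \emph{all} measurable sets $S\in\Sigma$ of the product domain $A\times\Lambda$, whereas $\Cheeger_{G(f)}$ only sees the ``cylindrical'' sets $A\times S$ with $S\subset V$. Since an infimum over a smaller collection of competitors can only be larger, it suffices to show that on cylindrical sets the kernel quotient and the graph quotient coincide, and this is exactly the content of Lemma~\ref{lem: comm norm [K,P_{AxS}] vs graph weights}.

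First I would record the admissibility bookkeeping. For $\emptyset\neq S\subsetneq V$, Lemma~\ref{lem: comm norm [K,P_{AxS}] vs graph weights} gives $\norm{P_{A\times S}W_\Psi f}_2^2=\sum_{\lambda\in S}w_\lambda>0$ and, using that $w_\lambda=0$ for $\lambda\notin V$, also $\norm{P_{(A\times S)^c}W_\Psi f}_2^2=\sum_{\lambda\in V\setminus S}w_\lambda>0$; hence $A\times S$ is a legitimate competitor in Definition~\ref{def: kernel Cheeger constant of f}. The same lemma identifies its commutator numerator with $\sum_{(\lambda,\lambda')\in\partial S}w_{\lambda,\lambda'}$, so the kernel quotient at $A\times S$ equals the graph quotient at $S$. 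Taking the infimum over all $\emptyset\neq S\subsetneq V$ and comparing with the larger infimum over all admissible $S\in\Sigma$ yields
\[
\Cheeger_{W_\Psi f}\;\le\;\inf_{\emptyset\neq S\subsetneq V}\frac{\sum_{(\lambda,\lambda')\in\partial S}w_{\lambda,\lambda'}}{\Bigl(\sum_{\lambda\in S}w_\lambda\Bigr)\wedge\Bigl(\sum_{\lambda\in S^c}w_\lambda\Bigr)}\;=\;\Cheeger_{G(f)}.
\]

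Next I would dispose of the degenerate case. Since $f\neq0$ forces $\sum_{\lambda\in V}w_\lambda=\norm{f}_2^2>0$, one has $\#V\ge1$; if $\#V=1$ there are no nontrivial graph splits and $\Cheeger_{G(f)}:=1$, while the a priori bound $\Cheeger_{W_\Psi f}\in[0,1]$ recorded after Definition~\ref{def: kernel Cheeger constant of f} gives the inequality for free. For the ``in particular'' statement I would combine the main inequality with Theorem~\ref{thm: RKHS PR lower Cheeger bound}, after noting that phase retrievability of $f$ is equivalent to that of $F=W_\Psi f$ in $\Hcal_\Psi$ (immediate, since $W_\Psi$ is an injective isometry intertwining the two phaseless problems). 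Since $c\mapsto\sqrt{c^{-1}-1}$ is decreasing on $(0,1]$, the chain $C(f)=C(W_\Psi f)\ge\sqrt{\Cheeger_{W_\Psi f}^{-1}-1}\ge\sqrt{\Cheeger_{G(f)}^{-1}-1}$ closes the argument.

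I do not expect a genuine obstacle: once Lemma~\ref{lem: comm norm [K,P_{AxS}] vs graph weights} is available the proof is a soft ``restriction of the infimum'' comparison. The only points demanding care are the admissibility matching---ensuring cylindrical sets are valid competitors and that the within-$V$ complement convention $S^c=V\setminus S$ lines up with the genuine complement $(A\times S)^c$ in $A\times\Lambda$---and the explicit handling of the one-vertex case, where both constants sit at their default value.
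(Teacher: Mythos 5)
Your proof is correct and takes essentially the same route as the paper: the paper's own proof consists precisely of restricting the infimum defining $\Cheeger_{W_\Psi f}$ to product sets $A\times S$ and invoking Lemma~\ref{lem: comm norm [K,P_{AxS}] vs graph weights}. Your additional bookkeeping---checking that cylindrical sets are admissible competitors, that the complement conventions in $A\times\Lambda$ and in $V$ agree because $w_\lambda=0$ off $V$, handling $\#V=1$, and transferring phase retrievability through the isometry $W_\Psi$ before applying Theorem~\ref{thm: RKHS PR lower Cheeger bound}---merely makes explicit what the paper leaves implicit.
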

	\begin{proof}
		Restricting the infimum defining the kernel Cheeger constant $\Cheeger_{W_\Psi f}$ to product sets $A\times S$ and invoking Lemma~\ref{lem: comm norm [K,P_{AxS}] vs graph weights} yields the claim.
	\end{proof}
	
	The argument of \cite[Theorem 2.9]{cheng2021stable} can be adapted to our setting to obtain an upper bound for the local stability constant $C(f)$ even when the field of scalars is $\F=\C$. Let $V$ and $E$ be as in \eqref{eq: def of V and E}, and define
	\[
	m_\lambda := |f*\psi_\lambda^\ast| \in L^2(A) \quad (\lambda\in V)\,, 
	\qquad
	m_{\lambda,\lambda'} := |f*\psi_\lambda^\ast*\psi_{\lambda'}^\ast| \in L^2(A) \quad ((\lambda,\lambda')\in E).
	\]
	Introduce the temporal weighted graph
	\[
	G_t(f):= (V_x,E_x,(m_\lambda(x))_{\lambda\in V},(m_{\lambda,\lambda'}(x))_{(\lambda,\lambda')\in E})_{x\in A},
	\]
	where
	\[
	V_x := \{\lambda\in V : |(f*\psi_\lambda)(x)|>0\},
	\qquad
	E_x := \{(\lambda,\lambda')\in E : |(f*\psi_\lambda*\psi_{\lambda'})(x)|>0\}.
	\]
	For any temporal graph signal $z=(z_\lambda)_{\lambda\in V}\in L^2(A\times V;|W_\Psi f|^2)$, set
	\[
	\mathcal{L}_{G_t(f)}(z):= \sum_{(\lambda,\lambda^\prime)\in E} \norm{(z_\lambda-z_{\lambda^\prime})\, m_{\lambda,\lambda^\prime}}_2^2.
	\]
	In analogy to Remark~\ref{rm: alg connect and Cheeger inequ}, define the algebraic connectivity of $G_t(f)$ by
	\begin{equation}\label{eq: alg connect of temp graph}
		\mathfrak{A}_{G_t(f)} := 
		\inf_{\substack{0\neq z\in L^2(A\times V;|W_\Psi f|^2)\\ \langle z,\mathds{1}\rangle=0}}
		\frac{\Lcal_{G_t(f)}(z)}{\sum_{\lambda\in V}\norm{z_\lambda \, m_\lambda}_2^2},
	\end{equation}
	where $\langle\,,\,\rangle$ denotes the natural weighted inner product on $L^2(A\times V;|W_\Psi f|^2)$, i.e.
	\[
	\langle z,\mathds{1}\rangle=\sum_{\lambda\in V}\int_A z_\lambda(x)\, m_\lambda(x)^2\,\dx .
	\]
	Restricting the infimum in \eqref{eq: alg connect of temp graph} to those $z$ for which each $z_\lambda$ is constant on $A$ immediately yields
	\begin{equation}\label{eq: alg connect G(f) vs G_t(f)}
		0 \le \mathfrak{A}_{G_t(f)} \le \mathfrak{A}_{G(f)},
	\end{equation}
	where $\mathfrak{A}_{G(f)}$ is the (usual) algebraic connectivity of $G(f)$ defined in \eqref{eq: alg connect}. This formalizes the intuition that $\mathfrak{A}_{G(f)}$ measures how well the temporal graph family---determined by the interplay of $f$ and the filters---is connected (on average over $A$), whereas $\mathfrak{A}_{G_t(f)}$ captures the local connectivity of $f$ with respect to the filters on $A$. In particular, if $G(f)$ is disconnected, then $\mathfrak{A}_{G(f)}=\mathfrak{A}_{G_t(f)}=0$.
	
	With these adjustments, the proof of \cite{cheng2021stable} carries over almost verbatim to our setting. In particular, one obtains the following upper bound for the local stability constant of the underlying phase retrieval problem.
	\begin{proposition}
		Assume that the field of scalars is $\F=\R$ or $\F=\C$. 
		For every signal $0\neq f \in L^2(A)$, one has 
		\begin{equation}\label{eq: analog upper bound on C(f) through temp alg connect}
			C(f) \leq \sqrt{32\, D_f\, \Afrak_{G_t(f)}^{-1} + 10},
		\end{equation}
		where $D_f=\sup_{\lambda\in V}\# [\lambda]_{\approx_f}$ denotes the (maximal) degree of $G(f)$.
	\end{proposition}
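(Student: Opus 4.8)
The plan is to transcribe the argument of \cite[Theorem~2.9]{cheng2021stable} into the convolutional RKHS language of this section, the only genuine change being that the \emph{temporal} graph $G_t(f)$ and its algebraic connectivity $\Afrak_{G_t(f)}$ take over the role of the averaged graph and its spectral gap. By Remark~\ref{rm: wlog f may assumed to be normalized} I may assume $\norm{f}_2=1$. Fix $g\in L^2(A)$, put $\varepsilon:=\norm{\,|W_\Psi f|-|W_\Psi g|\,}_{L^2(A\times\Lambda)}$, and aim to exhibit a single $\alpha\in\T$ with $\norm{f-\alpha g}_2^2\le(32\,D_f\,\Afrak_{G_t(f)}^{-1}+10)\,\varepsilon^2$; since $g$ is arbitrary, this bounds $C(f)$. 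As $W_\Psi$ is an isometry (Theorem~\ref{thm: characterization of properties C and P}) and $\Lambda$ is discrete, the error splits over the frame index,
\[
\norm{f-\alpha g}_2^2=\sum_{\lambda\in\Lambda}\int_A\bigl|(f*\psi_\lambda^\ast)(x)-\alpha\,(g*\psi_\lambda^\ast)(x)\bigr|^2\dx,
\]
and the terms with $\lambda\notin V$ equal $\norm{g*\psi_\lambda^\ast}_2^2$ and are bounded by $\varepsilon^2$.

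First I would introduce the pointwise relative phase. Wherever $m_\lambda(x)=|(f*\psi_\lambda^\ast)(x)|>0$ I set $z_\lambda(x):=\sgn\bigl((g*\psi_\lambda^\ast)(x)\bigr)\,\overline{\sgn\bigl((f*\psi_\lambda^\ast)(x)\bigr)}\in\T$, and $z_\lambda(x):=1$ off the support. Multiplying by the unimodular conjugate phase of $f*\psi_\lambda^\ast$ gives the pointwise bound
\[
\bigl|(f*\psi_\lambda^\ast)(x)-\alpha(g*\psi_\lambda^\ast)(x)\bigr|\le\bigl|\,m_\lambda(x)-|(g*\psi_\lambda^\ast)(x)|\,\bigr|+\bigl|1-\alpha z_\lambda(x)\bigr|\,|(g*\psi_\lambda^\ast)(x)|.
\]
Summing in $\lambda$ and integrating, the triangle inequality in $L^2(A\times\Lambda)$ separates the error into a modulus-mismatch part bounded by $\varepsilon$ (which also absorbs the $\lambda\notin V$ terms) and a phase-variation part $\sum_{\lambda\in V}\norm{(1-\alpha z_\lambda)\,m_\lambda}_2^2$, after swapping $|g*\psi_\lambda^\ast|$ for $m_\lambda$ at the cost of another modulus-mismatch term. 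The whole task reduces to choosing $\alpha\in\T$ making the phase-variation part small, which is precisely what the connectivity of $G_t(f)$ governs.

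The core estimate is a Dirichlet-energy bound for the phase field $z=(z_\lambda)_{\lambda\in V}$. Comparing the edge moduli $m_{\lambda,\lambda'}=|f*\psi_\lambda^\ast*\psi_{\lambda'}^\ast|$ of $f$ with the corresponding moduli for $g$ --- exactly as in \cite{cheng2021stable} --- one controls each weighted phase jump $(z_\lambda(x)-z_{\lambda'}(x))\,m_{\lambda,\lambda'}(x)$ by the vertex-level measurement discrepancies at $\lambda$ and $\lambda'$; since every vertex is incident to at most $D_f$ edges, summation gives $\Lcal_{G_t(f)}(z)\le 8\,D_f\,\varepsilon^2$. Because each $z_\lambda(x)\in\T$ and, by $\norm{f}_2=1$, the number $c:=\langle z,\mathds{1}\rangle$ is the weighted mean of $z$, one has the clean identity $\sum_{\lambda\in V}\norm{(z_\lambda-c)m_\lambda}_2^2=1-|c|^2$, with $z-c\mathds{1}$ obeying the mean-zero constraint $\langle z-c\mathds{1},\mathds{1}\rangle=0$. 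Feeding $z-c\mathds{1}$ into the variational definition~\eqref{eq: alg connect of temp graph} of $\Afrak_{G_t(f)}$ and using $\Lcal_{G_t(f)}(z-c\mathds{1})=\Lcal_{G_t(f)}(z)$ yields $1-|c|^2\le\Afrak_{G_t(f)}^{-1}\Lcal_{G_t(f)}(z)\le 8\,D_f\,\Afrak_{G_t(f)}^{-1}\varepsilon^2$. Passing to the genuine phase $\alpha:=\overline{c}/|c|$ (or $\alpha:=1$ if $c=0$), for which $|1-\alpha z_\lambda|$ equals the distance from $z_\lambda$ to the unimodular number $c/|c|$, upgrades this to $\sum_{\lambda\in V}\norm{(1-\alpha z_\lambda)m_\lambda}_2^2\le 4(1-|c|^2)\le 32\,D_f\,\Afrak_{G_t(f)}^{-1}\varepsilon^2$; combining with the modulus-mismatch part through the split of the first paragraph gives $\norm{f-\alpha g}_2^2\le(32\,D_f\,\Afrak_{G_t(f)}^{-1}+10)\,\varepsilon^2$, as claimed.

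The step I expect to be the main obstacle is the edge-energy bound $\Lcal_{G_t(f)}(z)\le 8\,D_f\,\varepsilon^2$. In the real case the relevant phase is the two-valued $\zeta_S=\mathds{1}_S-\mathds{1}_{S^c}$, and the sign set $S$ of Lemma~\ref{lem: real RKHS commutator norm vs measurements distance} linearizes the comparison; here $z_\lambda(x)$ ranges over the entire unit circle, and one must convert the $L^2$-closeness of the doubly-convolved moduli $|f*\psi_\lambda^\ast*\psi_{\lambda'}^\ast|$ and $|g*\psi_\lambda^\ast*\psi_{\lambda'}^\ast|$ into a pointwise bound on a genuinely $\C$-valued phase difference, while retaining both the explicit constant and the correct degree dependence $D_f$. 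Everything downstream --- the spectral-gap step and the mean-to-phase correction --- is then clean, thanks to the identity $\sum_{\lambda\in V}\norm{(z_\lambda-c)m_\lambda}_2^2=1-|c|^2$.
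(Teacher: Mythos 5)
Your overall strategy---transcribing \cite[Theorem~2.9]{cheng2021stable} with the temporal graph $G_t(f)$ and its algebraic connectivity $\Afrak_{G_t(f)}$ taking over the role of the averaged graph and its spectral gap---is exactly the route the paper takes; indeed, the paper's own ``proof'' consists of little more than the assertion that the cited argument carries over almost verbatim. The downstream parts of your outline are sound: with $\norm{f}_2=1$ and unimodular conventions the identity $\sum_{\lambda\in V}\norm{(z_\lambda-c)\,m_\lambda}_2^2=1-|c|^2$ with $c=\ip{z,\mathds{1}}$ is correct, $z-c\mathds{1}$ is admissible in \eqref{eq: alg connect of temp graph}, and passing from the weighted mean $c$ to the unimodular scalar $\alpha=\overline{c}/|c|$ at the cost of a factor $4$ is standard (the final bookkeeping that produces exactly $32\,D_f\,\Afrak_{G_t(f)}^{-1}+10$ is left unverified, but that is minor).

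The step you yourself flag as the main obstacle---the Dirichlet-energy bound $\Lcal_{G_t(f)}(z)\le 8\,D_f\,\varepsilon^2$---is a genuine gap, and in the form you set it up it is actually \emph{false}, not merely unproven. Take $g=\alpha_0 f$ with $\alpha_0\in\T\setminus\{1\}$, so that $\varepsilon=0$. With your conventions, $z_\lambda=\alpha_0$ on $\{F_\lambda\neq 0\}$ (writing $F_\lambda:=f*\psi_\lambda^\ast$) and $z_\lambda=1$ elsewhere, hence $|z_\lambda-z_{\lambda'}|=|\alpha_0-1|$ on the symmetric difference of the supports of $F_\lambda$ and $F_{\lambda'}$; since $m_{\lambda,\lambda'}=|f*\psi_\lambda^\ast*\psi_{\lambda'}^\ast|$ is the modulus of a \emph{double} convolution, it need not vanish on that set (Section~4 does not assume bandlimited filters), so $\Lcal_{G_t(f)}(z)>0$ while $8\,D_f\,\varepsilon^2=0$. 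No choice of convention off the supports repairs this, because the problem is structural: closeness of the moduli $|W_\Psi f|$ and $|W_\Psi g|$ imposes \emph{no pointwise constraint whatsoever} on the relative phases $z_\lambda(x)$; the only mechanism coupling $z_\lambda$ to $z_{\lambda'}$ is the nonlocal consistency relation $(g*\psi_\lambda^\ast)*\psi_{\lambda'}^\ast=(g*\psi_{\lambda'}^\ast)*\psi_\lambda^\ast$, which your outline never invokes. This is precisely where the analogy with \cite{cheng2021stable} breaks as a ``verbatim'' transcription: there, an edge weight is the norm of an overlap region on which \emph{both} local error bounds apply directly, so each weighted phase jump is controlled edge by edge; here the edge weight $m_{\lambda,\lambda'}(x)$ at a point $x$ is not dominated by any function of the vertex data at that same $x$. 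Converting the nonlocal convolution consistency into an edge-energy estimate is the entire content of adapting the cited proof, and your proposal does not contain that argument.
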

	We note, however, that \eqref{eq: analog upper bound on C(f) through temp alg connect} is only informative when $D_f<\infty$ and $\mathfrak{A}_{G_t(f)}>0$. The first condition is enforced if $D_\Psi:=\sup_{\lambda\in \Lambda}\# \{\lambda^\prime\in \Lambda\,|\, \norm{\psi_\lambda*\psi_{\lambda^\prime}}_2>0\}$ is finite, which is a standard assumption on the filterbank $\Psi$, whereas the second appears to be fairly restrictive in our setting. While replacing $\mathfrak{A}_{G_t(f)}$ by $\mathfrak{A}_{G(f)}$ would substantially relax the assumption, we do not expect such a bound to hold in full generality: by the graph Cheeger inequality~\eqref{eq: graph Cheeger inequality}, an estimate of the form $C(f)\lesssim \mathfrak{A}_{G(f)}^{-1/2}$ would imply $C(f)\lesssim \Cheeger_{G(f)}^{-1}$, yet Example~\ref{ex: Cheeger of f+T_x f} shows that this cannot generally be true globally for signals in $L^2(A)$. 

	\begin{example}\label{ex: Cheeger of f+T_x f}
		Assume that $A$ is non-compact and let $0\neq h\in L^2(A)$ be such that the graph $G(h)$ is finite and connected (for instance, $h=\psi_{\lambda^\ast}$ for any $\lambda^\ast\in \Lambda$ if $D_\Psi<\infty$). After normalization, we may assume that $\norm{h}_2=1$. 
		
		Let $\varepsilon>0$. 
		Since $W_\Psi$ is an isometry, there exists a compact set $Q=Q(\varepsilon)\subset A$ such that 
		\[
		\int_{Q} \sum_{\lambda\in \Lambda} |(h*\psi_\lambda^\ast)(y)|^2 \, \dy 
		= \int_{Q} \sum_{\lambda\in \Lambda} |W_\Psi h(y,\lambda)|^2 \, \dy 
		\geq 1-\varepsilon.
		\]
		Using this concentration estimate, one readily checks that, for all $x\in A\setminus (Q-Q)$,
		\[
		\norm{\,|W_\Psi(h\pm T_xh)| - |W_\Psi h| - T_x|W_\Psi h|\,}_{L^2(A\times \Lambda)} \lesssim \varepsilon.
		\]
		In other words, up to an error controlled by $\varepsilon$, the phaseless measurements agree in $L^2$-norm:
		\[
		|W_\Psi(h+T_xh)| \approx |W_\Psi h| + T_x|W_\Psi h| \approx |W_\Psi(h-T_xh)|.
		\]
		At the same time, we obtain that, for any $\alpha\in \T$, 
		\[
		2 - \norm{W_\Psi(h+ T_xh)-\alpha \, W_\Psi(h- T_xh)}_{L^2(A\times \Lambda)} \lesssim \varepsilon.
		\]
		Thus,
		\[
		C(h+T_xh)\gtrsim \frac{2-\varepsilon}{\varepsilon}.
		\]
		
		In particular, this example shows that the local stability constant for the generalized wavelet phase retrieval problem can become arbitrarily large within the signal class 
		\[\{h+T_xh : x\in A\}\subset L^2(A).\] 
		This type of construction, based on time-frequency (or, more generally, component) separation, is standard and has already been used in various settings; see, e.g., \cite{cahill2016PR,alaifari2017PRcont,alaifari2021gabor}.
		
		On the other hand, we can choose $Q$ sufficiently large so that 
		\[
		\Cheeger_{G(h+T_xh)} \geq \frac{\Cheeger_{G(h)}}{4}>0.
		\]
		In particular, the local phase retrieval stability constant cannot, in general, be controlled by the inverse of the graph Cheeger constant. The graph Cheeger constant being bounded away from zero for spatially separated signals of this type is in stark contrast to the behavior of the kernel Cheeger constant. We provide the details for these claims in the appendix; see Proposition~\ref{prop: aux lem Cheeger const h vs h+T_xh}.
	\end{example}

\section{Appendix}\label{sec: app}

In this appendix we collect the proofs deferred from the main text.

We begin with the proof for an earlier claim from Remark~\ref{rm: conditions (C) and (SI)} in Section~\ref{sec: PR conv}. Here, we are in the classical setting of the one-dimensional wavelet transform, i.e., $A=\R$ and $\Lambda=H=\R^\ast$ in the terminology of the same remark.
\begin{proposition}\label{prop: sufficient condition for (SI)}
	Fix $0\neq \psi \in L^2(\R)$. 
	Let $\psi_{\lambda}(x)=|\lambda|^{-1} \psi(\lambda^{-1}x)$, $\lambda\in \R^\ast$. If $\psi$ has compact support $\supp(\widehat{\psi})\subset \R_{> 0}$, then $\Psi=(\psi_\lambda)_{\lambda\in \R^\ast}$ satisfies the spectral injectivity condition \textup{(SI)}.
	
	In particular, for any $f,g \in L^2(\R)$, the scalograms $|W_\Psi f|$ and $|W_\Psi g|$ agree if and only if the following two properties are satisfied:  
	\begin{enumerate}
		\item[(i)] $|\widehat{f}| = |\widehat{g}|$, 
		\item[(ii)] $\forall \lambda \in \Lambda\,:\,|f \ast \psi_\lambda^\ast| = |g \ast \psi_\lambda^\ast|$ and $\big| \widehat{f \ast \psi_\lambda^\ast}\big| = \big| \widehat{g \ast \psi_\lambda^\ast}\big|$. 
	\end{enumerate} 
\end{proposition}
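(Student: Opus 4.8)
The plan is to linearize the dilation structure by passing to logarithmic coordinates, which turns $M_\Psi$ into a convolution operator whose kernel inherits compact support from $\widehat{\psi}$; injectivity then follows from the classical principle (quoted in Remark~\ref{rm: conditions (C) and (SI)}) that convolution with a kernel whose Fourier transform vanishes only on a null set is injective. First I would record the dilation identity $\widehat{\psi}_\lambda(\xi)=\widehat{\psi}(\lambda\xi)$, coming from the $L^1$-normalization of $\psi_\lambda$, so that $|\widehat{\psi}_\lambda(\xi)|^2=|\widehat{\psi}(\lambda\xi)|^2$; the $L^\infty$-clause of \textup{(SI)} is then the standard mild assumption $\widehat{\psi}\in L^\infty$ on the filter. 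The key structural observation is that $\supp(\widehat{\psi})\subset\R_{>0}$ forces $|\widehat{\psi}(\lambda\xi)|^2$ to be supported where $\lambda\xi>0$. Hence for $\lambda>0$ only the restriction $F|_{\{\xi>0\}}$ contributes to $(M_\Psi F)(\lambda)$, and for $\lambda<0$ only $F|_{\{\xi<0\}}$: the two halves of $\R^\ast$ decouple entirely, and by the reflection $(\xi,\lambda)\mapsto(-\xi,-\lambda)$ the two cases are symmetric, so it suffices to prove injectivity on the positive half.

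Next I would carry out the substitution $\xi=e^s$, $\lambda=e^r$ on $\{\xi>0\}$, setting $g(u):=|\widehat{\psi}(e^u)|^2$ and $G(s):=e^sF(e^s)$. Then $G\in L^1(\R)$ with $\|G\|_1=\|F|_{(0,\infty)}\|_1$, and a direct change of variables gives
\[
(M_\Psi F)(e^r)=\int_{\R} G(s)\,g(r+s)\,\mathrm{d}s=(G\ast\check{g})(-r),\qquad \check{g}(t):=g(-t).
\]
The decisive point is that $g$ has \emph{compact support}: if $\supp(\widehat{\psi})\subset[a,b]$ with $0<a<b$, then $\supp(g)\subset[\log a,\log b]$, and moreover $g\in L^1(\R)$ since $\int_\R|g|=\int_a^b|\widehat{\psi}(\xi)|^2\,\xi^{-1}\,\dxi\le a^{-1}\|\widehat{\psi}\|_2^2<\infty$. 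By the Paley--Wiener theorem $\widehat{g}$ extends to an entire function and is therefore real-analytic on $\R$; as $\psi\neq 0$ forces $g\not\equiv 0$, the transform $\widehat{g}$ is not identically zero and hence has only isolated zeros. In particular $\{\widehat{g}\neq 0\}$ is open, dense, and of full measure.

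Finally I would conclude as follows. Suppose $M_\Psi F=0$. On the positive half this reads $G\ast\check{g}=0$, so $\widehat{G}\cdot\widehat{\check{g}}\equiv 0$; since $\widehat{\check{g}}(\omega)=\widehat{g}(-\omega)$ vanishes only on a discrete set while $\widehat{G}$ is continuous (as $G\in L^1$), it follows that $\widehat{G}\equiv 0$, whence $G=0$ and $F$ vanishes on $\{\xi>0\}$. The symmetric argument yields $F=0$ on $\{\xi<0\}$, so $F=0$ and $M_\Psi$ is injective; together with the $L^\infty$-bound this establishes \textup{(SI)}. The ``In particular'' statement is then immediate from Theorem~\ref{thm: characterization of properties C and P}(b). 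I expect the main obstacle to be the middle step: cleanly identifying $M_\Psi$ (on each sign sheet) with a multiplicative convolution, and rigorously extracting from the compact support of $\widehat{\psi}$ that the kernel's Fourier transform is non-vanishing almost everywhere via the analyticity argument. Once that is in place, the injectivity criterion and the sign-decoupling are routine bookkeeping.
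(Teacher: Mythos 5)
Your proposal is correct and follows essentially the same route as the paper's proof: the logarithmic substitution turning $M_\Psi$ into a convolution equation on $\R$, compact support of the log-kernel inherited from $\supp(\widehat{\psi})\subset\R_{>0}$, Paley--Wiener analyticity giving isolated zeros of its Fourier transform, and continuity of the Fourier transform of the $L^1$ datum to conclude injectivity. If anything, your write-up is slightly more complete, since you explicitly dispatch the negative-frequency/negative-scale half via the reflection $(\xi,\lambda)\mapsto(-\xi,-\lambda)$, a step the paper's proof passes over silently when it concludes $F=0$ from the positive half alone.
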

\begin{proof}
	In view of Theorem~\ref{thm: characterization of properties C and P} it suffices to show that $\Psi$ satisfies the spectral injectivity condition \textup{(SI)}; that is, we have to prove that the kernel of the linear operator 
	\[
	M_\Psi : L^1(\R) \to \mathfrak{B}_\nu(\R^\ast), 
	\quad (M_\Psi F)(\lambda) = \int_{\R} F(\xi)\, |\widehat{\psi}(\lambda \xi)|^2 \, \dxi ,
	\]
	is trivial, where $\nu = \frac{d\lambda}{|\lambda|}$ denotes the standard Haar measure on $\R^\ast$. Let $F\in L^1(\R)$ and suppose that 
	\[
	\int_{\R} F(\xi)\, |\widehat{\psi}(\lambda \xi)|^2 \, \dxi = 0 
	\quad \text{for a.e. } \lambda\in \R^\ast.
	\]
	Set $G := |\widehat{\psi}|^2$. Then $G\geq 0$, $G\in L^1(\R)$, and $\supp(G)\subset \R_{>0}$ is compact. For a.e.\ $\lambda>0$ it follows that
	\[
	\int_{\R_{>0}} F(\xi)\, |\widehat{\psi}(\lambda \xi)|^2 \, \dxi = 0 .
	\]
	Make the substitutions $\xi = e^{-r}$ and $\lambda = e^{s}$, and define
	\[
	u(r) := -F(e^{-r})e^{-r}, 
	\qquad v(r) := G(e^{r}) .
	\]
	Then $u,v\in L^1(\R)$, and for a.e.\ $s\in \R$ we obtain
	\[
	(u*v)(s) 
	= -\int_{\R} F(e^{-r})e^{-r}\, G(e^{s-r}) \, dr
	= \int_{\R_{>0}} F(\xi)\, |\widehat{\psi}(\lambda \xi)|^2 \, \dxi 
	= 0.
	\]
	Hence $\widehat{u}\,\widehat{v}=0$. Moreover, $v$ is compactly supported. By the Paley--Wiener theorem, $0\neq \widehat{v}$ admits an analytic extension to the complex plane, and therefore its zero set $Z$ has no accumulation point. Thus $\widehat{u}$ must vanish on $\R\setminus Z$. Since $\widehat{u}$ is continuous, it follows that $\widehat{u}$ vanishes on all of $\R$, and therefore $u=0$. Consequently, $F=0$, which shows that the kernel of $M_\Psi$ is trivial. This completes the proof.
\end{proof}
We now turn to the behavior of our two notions of Cheeger constant. The first result concerns the kernel Cheeger constant. It shows that small Cheeger constants can be obtained by a simple spatial separation argument, namely by shifting components far apart (cf. Example~\ref{ex: kernel Cheeger vs separation by means of shifts}).
\begin{proposition}\label{prop: kernel Cheeger of G+T_zH conv to zero}
	Suppose that $X$ is a non-compact SLCA group, equipped with Haar measure $\mu$ and Borel $\sigma$-algebra $\Sigma$. Moreover, assume that the kernel $k$ is translation-invariant, i.e. for all $x,y,z\in X$, it holds that
	\[k(x+z,y+z)=k(x,y).\]
	Finally, let $0\neq \gf,\hf\in \Hcal$. 
	
	Then, for all $\varepsilon>0$ there exists a compact set $Q\subset X$ such that, for all $z\in X\setminus Q$,
	\[\Cheeger_{\gf+T_z\hf} <\varepsilon.\]
\end{proposition}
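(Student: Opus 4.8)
The plan is to bound $\Cheeger_{\gf+T_z\hf}$ from above by evaluating the defining infimum \eqref{eq: def kernel Cheeger const} on a single, well-chosen splitting set $S$, exploiting that for large $z$ the summands $\gf$ and $T_z\hf$ are concentrated on essentially disjoint regions. First I would record the one place where translation invariance of $k$ is genuinely used: from $k(x+z,y+z)=k(x,y)$ one computes $T_z k_w=k_{w+z}$, so $T_z$ maps kernel functions into $\Hcal$ and, being an $L^2$-isometry, restricts to a unitary of $\Hcal$. Hence $T_z\hf\in\Hcal$, the signal $\ff:=\gf+T_z\hf$ lies in $\Hcal$, and $\Cheeger_\ff$ is well-defined. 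In particular $K\ff=\ff$, so by the identity established right after Definition~\ref{def: kernel Cheeger constant of f} we have $\norm{[K,P_S]\ff}_2=\norm{K^\perp P_S\ff}_2$ for every $S\in\Sigma$.

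Next I would set up the $L^2$-localization. Fix $\delta>0$ (to be pinned down at the end in terms of $\varepsilon$, $\norm{\gf}_2$, and $\norm{\hf}_2$). Since $\gf,\hf\in L^2(X)$, choose compact sets $Q_{\gf},Q_{\hf}\subset X$ with $\norm{P_{Q_{\gf}^c}\gf}_2^2<\delta$ and $\norm{P_{Q_{\hf}^c}\hf}_2^2<\delta$. Put $Q:=Q_{\gf}-Q_{\hf}$, which is compact as the image of $Q_{\gf}\times Q_{\hf}$ under the continuous subtraction map. For $z\notin Q$ the shifted set $Q_{\gf}-z$ is disjoint from $Q_{\hf}$, so by Haar-invariance of $\mu$ one gets $\norm{P_{Q_{\gf}}T_z\hf}_2^2=\int_{Q_{\gf}-z}|\hf|^2\dmu<\delta$. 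I then test the infimum with $S:=Q_{\gf}$.

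The estimates are then routine. For the numerator, using $P_S\ff-\gf=-P_{S^c}\gf+P_S T_z\hf$ together with $K^\perp\gf=0$ (as $\gf\in\Hcal$) and $\norm{K^\perp}\le 1$,
\[
\norm{[K,P_S]\ff}_2=\norm{K^\perp(P_S\ff-\gf)}_2\le \norm{P_S\ff-\gf}_2\le \norm{P_{S^c}\gf}_2+\norm{P_S T_z\hf}_2< 2\sqrt\delta.
\]
For the denominator, the analogous splitting $P_{S^c}\ff-T_z\hf=P_{S^c}\gf-P_S T_z\hf$ and $\norm{T_z\hf}_2=\norm{\hf}_2$ give $\norm{P_S\ff}_2\ge\norm{\gf}_2-2\sqrt\delta$ and $\norm{P_{S^c}\ff}_2\ge\norm{\hf}_2-2\sqrt\delta$, both strictly positive once $\delta$ is small, so $S$ is admissible. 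Combining,
\[
\Cheeger_\ff\le \frac{\norm{[K,P_S]\ff}_2^2}{\norm{P_S\ff}_2^2\wedge\norm{P_{S^c}\ff}_2^2}\le \frac{4\delta}{\bigl(\norm{\gf}_2\wedge\norm{\hf}_2-2\sqrt\delta\bigr)^2},
\]
and choosing $\delta$ small (depending only on $\varepsilon$ and $\norm{\gf}_2\wedge\norm{\hf}_2$) forces the right-hand side below $\varepsilon$ for every $z\notin Q$.

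I do not expect a genuine obstacle here. The single conceptual input is that translation invariance of the kernel keeps $\ff$ inside $\Hcal$ (so the constant is even defined and $K$-invariant), and the rest is the standard tightness-plus-disjoint-support bookkeeping. The only point demanding a little care is the admissibility of $S=Q_{\gf}$, i.e.\ that both restricted norms be strictly positive; this is why $\delta$ must first be taken small relative to $\norm{\gf}_2\wedge\norm{\hf}_2$ and only afterwards relative to $\varepsilon$.
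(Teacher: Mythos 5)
Your proof is correct, and while it follows the same overall skeleton as the paper's --- localize $\gf$ and $\hf$ on compact sets, take $Q$ to be a difference set so that $z\notin Q$ forces essential disjointness, and bound $\Cheeger_{\gf+T_z\hf}$ by testing the defining infimum on the single set $S$ where $\gf$ is concentrated --- your handling of the commutator term is genuinely different. The paper (after normalizing $\norm{\gf}_2=\norm{\hf}_2=1$) chooses one compact neighborhood $Q_0$ concentrating both functions, additionally imposes the separation condition $\ip{|\gf|,T_z|\hf|}\le\delta$ for $z\notin Q_0$, sets $Q=Q_0-Q_0$ and $S=Q_0$, and bounds the numerator by splitting $[K,P_{Q_0}](\gf+T_z\hf)$ into $[K,P_{Q_0}]\gf$ and $[K,P_{Q_0}]T_z\hf$: the first term is controlled via the symmetry $\norm{[K,P_S]\gf}_2=\norm{[K,P_{S^c}]\gf}_2$, while the second needs the further consequence of translation invariance that $K$ and $T_z$ commute, so that $\norm{[K,P_{Q_0}]T_z\hf}_2=\norm{[K,P_{Q_0-z}]\hf}_2$; this gives $16\delta$ against a denominator of $1-3\delta$. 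You instead exploit $\ff=\gf+T_z\hf\in\Hcal$ through the identity $\norm{[K,P_S]\ff}_2=\norm{K^\perp P_S\ff}_2$ and subtract $\gf$ inside $K^\perp$ (legitimate since $K^\perp\gf=0$), reducing the numerator to $\norm{P_{S^c}\gf}_2+\norm{P_S T_z\hf}_2<2\sqrt{\delta}$, where the cross term is small purely because $Q_{\gf}-z$ and $Q_{\hf}$ are disjoint. This buys you two simplifications: translation invariance is invoked only once (to ensure $T_z\hf\in\Hcal$; you never need $KT_z=T_zK$), and the smallness of the interaction term follows from disjoint supports rather than a separately imposed inner-product decay condition. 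The paper's splitting, in exchange, treats $\gf$ and $T_z\hf$ symmetrically and is the same technique it reuses for part (a) of Proposition~\ref{prop: aux lem Cheeger const h vs h+T_xh}. Your attention to the admissibility of $S$ (positivity of both restricted norms, by taking $\delta$ small relative to $\norm{\gf}_2\wedge\norm{\hf}_2$ before relating it to $\varepsilon$) is exactly the right care point.
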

\begin{proof}
	First note that, for any \(z\in X\), the translation-invariance of \(k\) ensures that \(\gf+T_z\hf\in\Hcal\).
	
	Without loss of generality, let us assume that $\norm{\gf}_2=\norm{\hf}_2=1$. Recalling Definition~\ref{def: kernel Cheeger constant of f} of the kernel Cheeger constant, it suffices to show that we can choose $Q$ so that, for all $z\in Q^c$, there exists a set $S=S(z)\in \Sigma$ with
	\begin{align}\label{eq: Cheeger quotient}
		\frac{\norm{[K,P_S](\gf+T_z\hf)}_2^2}{\norm{P_S(\gf+T_z\hf)}_2^2 \wedge \norm{P_{S^c}(\gf+T_z\hf)}_2^2 } \leq \varepsilon.
	\end{align}
	Let us introduce a tolerance level $\delta>0$, to be specified at the end of the proof. 
	Now, there exists a ($\delta$-dependent) compact neighborhood $Q_0$ of $0$ (the neutral element of $X$) such that for every $z\in Q_0^c$, 
	\[
	\ip{|\gf|,T_z|\hf|}\leq \delta, \quad \text{as well as} \quad \left(\norm{P_{Q_0}\gf}_2^2\wedge\norm{P_{Q_0}\hf}_2^2\right) \geq 1-\delta.
	\]
	We claim that the compact ($\delta$-dependent) set $Q:=Q_0-Q_0\subset X$ does the job together with $S=Q_0$, as soon as one chooses $\delta$ small enough. 
	
	Fix any $z\in Q^c$. We start by bounding the denominator of the left-hand side of \eqref{eq: Cheeger quotient}. Noting that $Q_0\subset Q$, it follows that
	\begin{align*}
		\norm{P_{Q_0}(\gf+T_z\hf)}_2^2 
		\geq \norm{P_{Q_0}\gf}_2^2-2 \,|\ip{P_{Q_0}\gf,P_{Q_0}T_z\hf}|
		\geq \norm{P_{Q_0}\gf}_2^2-2 \,\ip{|\gf|,T_z|\hf|} \geq 1-3\delta
	\end{align*}
	Now, observe that $P_{Q_0^c}T_z=T_zP_{Q_0^c-z}$ and $Q_0\subset Q_0^c-z$. Therefore, we also have
	\begin{align*}
		\norm{P_{Q_0^c}(\gf+T_z\hf)}_2^2 
		\geq \norm{P_{Q_0^c}T_z\hf}_2^2-2 \delta
		= \norm{P_{Q_0^c-z}\hf}_2^2-2 \delta \geq \norm{P_{Q_0}\hf}_2^2-2 \delta \geq 1-3\delta.
	\end{align*}
	Together, this yields
	\begin{align}\label{eq: prop aux equation 1}
		\left(\norm{P_{Q_0}(\gf+T_z\hf)}_2^2 \wedge \norm{P_{Q_0^c}(\gf+T_z\hf)}_2^2\right)\geq 1-3\delta.
	\end{align}
	Next, we derive an upper bound on the commutator norm. First, note that
	\[\norm{[K,P_{Q_0}](\gf+T_z\hf)}_2^2\leq 2 \left(\norm{[K,P_{Q_0}]\gf}_2^2+\norm{[K,P_{Q_0}]T_z\hf}_2^2\right).\]
	Further, we have
	\[\norm{[K,P_{Q_0}]\gf}_2^2=\norm{[K,P_{Q_0^c}]\gf}_2^2\leq 2\left(\norm{KP_{Q_0^c}\gf}_2^2+\norm{P_{Q_0^c}\gf}_2^2\right)\leq 4\norm{P_{Q_0^c}\gf}_2^2\leq 4\delta.\]
	The translation-invariance of $k$ implies that $K$ and $T_z$ commute. Thus, we have
	\[\norm{[K,P_{Q_0}]T_z\hf}_2^2= \norm{[K,P_{Q_0-z}]\hf}_2^2\leq 4\norm{P_{Q_0-z}\hf}_2^2\leq 4\norm{P_{Q_0^c}\hf}_2^2\leq 4\delta.\]
	Combining the previous estimates shows that 
	\begin{align}\label{eq: prop aux equation 2}
		\norm{[K,P_{Q_0}](\gf+T_z\hf)}_2^2\leq 16\delta.
	\end{align}
	Plugging \eqref{eq: prop aux equation 1} and \eqref{eq: prop aux equation 2} into \eqref{eq: Cheeger quotient} concludes the proof as soon as one chooses $\delta>0$ with \[\frac{16\delta}{1-3\delta}<\varepsilon.\]
	This concludes the proof of the proposition.
\end{proof}

Finally, we supply the proof of a claim from Example~\ref{ex: Cheeger of f+T_x f}. It exhibits a sharp contrast between the behavior of the graph and kernel Cheeger constants under increasing spatial separation.
We use the same notation from Section~\ref{sec: Cheeger const semi-discrete}.

\begin{proposition}\label{prop: aux lem Cheeger const h vs h+T_xh}
	Suppose that $A$ is non-compact, and let $0\neq h\in L^2(A)$. Then the following hold:
	\begin{enumerate}
		\item[(a)] For all $\varepsilon>0$ there exists a compact set $Q_\varepsilon\subset A$ such that, for every $x\in A\setminus Q_\varepsilon$, the \emph{kernel Cheeger constant} satisfies
		\[\Cheeger_{W_\Psi(h+T_xh)} <\varepsilon.\]
		\item[(b)] If the graph $G(h)$ is finite and has at least two vertices, then there exists a compact set $Q^\prime\subset A$ such that, for every $x\in A\setminus Q^\prime$, the \emph{graph Cheeger constant} satisfies
		\[
		\Cheeger_{G(h+T_xh)} \geq \frac{\Cheeger_{G(h)}}{4}>0.
		\]
	\end{enumerate}
\end{proposition}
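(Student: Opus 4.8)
The plan is to prove the two parts by genuinely different mechanisms, since they are designed to exhibit opposite behavior under spatial separation. For part (a), the decisive structural fact is that $W_\Psi$ intertwines translation in the $A$-variable: writing $T_x^{(A)}$ for translation by $x$ acting on the $A$-coordinate of $A\times\Lambda$, one has $W_\Psi(T_xh)=T_x^{(A)}W_\Psi h$, so that $W_\Psi(h+T_xh)=F+T_x^{(A)}F$ with $F:=W_\Psi h\in\Hcal_\Psi$. The reproducing kernel $k_\Psi((x,\lambda),(x',\lambda'))=(\psi_{\lambda'}*\psi_\lambda^*)(x-x')$ depends on the $A$-coordinates only through their difference and is therefore invariant under $A$-translation; consequently $K_\Psi$ commutes with $T_x^{(A)}$. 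Since $A$ is non-compact, this is precisely the setting of Proposition~\ref{prop: kernel Cheeger of G+T_zH conv to zero} with $\gf=\hf=F$ and with the translation restricted to the $A$-factor. The proof of that proposition localizes entirely in the $A$-variable: it selects a compact $Q_0\subset A$ on which $F$ concentrates, tests the defining infimum of the kernel Cheeger constant on $S=Q_0\times\Lambda$, and uses only that $K_\Psi$ commutes with $T_x^{(A)}$ together with the vanishing overlap $\ip{|F|,T_x^{(A)}|F|}\to 0$ as $x\to\infty$. The discrete factor $\Lambda$ plays no active role, so I would carry that argument over verbatim and obtain $\Cheeger_{W_\Psi(h+T_xh)}<\varepsilon$ for all $x$ outside the compact set $Q_\varepsilon=Q_0-Q_0$.

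For part (b), I would pass to the frequency side, where the graph weights become explicit. By Plancherel and the convolution theorem,
\[
w_\lambda(f)=\int_{\widehat A}|\widehat f|^2\,|\widehat\psi_\lambda|^2\dxi,\qquad
w_{\lambda,\lambda'}(f)=\int_{\widehat A}|\widehat f|^2\,|\widehat\psi_\lambda|^2\,|\widehat\psi_{\lambda'}|^2\dxi.
\]
For $f=h+T_xh$ we have $\widehat f=(1+\gamma_x)\,\widehat h$, where $\gamma_x$ is the character with $\widehat{T_xh}=\gamma_x\widehat h$ and $|\gamma_x|\equiv 1$; hence $|\widehat f|^2=|\widehat h|^2(2+2\re\gamma_x)$. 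Substituting this yields
\[
w_\lambda(h+T_xh)=2w_\lambda(h)+2\re\int_{\widehat A}|\widehat h|^2\,|\widehat\psi_\lambda|^2\,\gamma_x\dxi,
\]
and the analogous identity for $w_{\lambda,\lambda'}$. The correction terms are the values at $x$ of inverse Fourier transforms of the $L^1$-functions $|\widehat h|^2|\widehat\psi_\lambda|^2$ and $|\widehat h|^2|\widehat\psi_\lambda|^2|\widehat\psi_{\lambda'}|^2$, so they tend to $0$ as $x\to\infty$ by the Riemann--Lebesgue lemma.

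Here the hypothesis that $G(h)$ is finite becomes decisive: there are only finitely many vertex- and edge-weights, so I can choose a single compact $Q'\subset A$ such that, for every $x\notin Q'$, each correction term is bounded in modulus by $\theta\cdot 2w_\bullet(h)$, where $\theta\in(0,1)$ is fixed with $\tfrac{1-\theta}{1+\theta}\ge\tfrac14$ (e.g.\ $\theta=\tfrac12$). Two consequences follow. First, the vertex and edge sets are unchanged: if $w_\bullet(h)=0$, then $|\widehat h|^2|\widehat\psi_\lambda|^2=0$ a.e.\ forces $w_\bullet(h+T_xh)=0$ for every $x$, while if $w_\bullet(h)>0$, then $w_\bullet(h+T_xh)\ge(1-\theta)\,2w_\bullet(h)>0$ keeps it positive, so $G(h+T_xh)$ has the same $V$ and $E$ as $G(h)$. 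Second, every weight obeys the two-sided estimate $w_\bullet(h+T_xh)\in[(1-\theta)\,2w_\bullet(h),\,(1+\theta)\,2w_\bullet(h)]$. Inserting these bounds into the Cheeger quotient for an arbitrary $\emptyset\neq S\subsetneq V$, and using that $a_i\le cb_i$ implies $\min(a_1,a_2)\le c\min(b_1,b_2)$, the common factor $2$ cancels and yields
\[
\frac{\sum_{(\lambda,\lambda')\in\partial S}w_{\lambda,\lambda'}(h+T_xh)}
{\bigl(\sum_{\lambda\in S}w_\lambda(h+T_xh)\bigr)\wedge\bigl(\sum_{\lambda\in S^c}w_\lambda(h+T_xh)\bigr)}
\ge\frac{1-\theta}{1+\theta}\,
\frac{\sum_{(\lambda,\lambda')\in\partial S}w_{\lambda,\lambda'}(h)}
{\bigl(\sum_{\lambda\in S}w_\lambda(h)\bigr)\wedge\bigl(\sum_{\lambda\in S^c}w_\lambda(h)\bigr)}.
\]
Taking the infimum over $S$ gives $\Cheeger_{G(h+T_xh)}\ge\frac{1-\theta}{1+\theta}\Cheeger_{G(h)}\ge\frac14\Cheeger_{G(h)}$, and strict positivity is immediate whenever $G(h)$ is connected (the case used in Example~\ref{ex: Cheeger of f+T_x f}), since a finite connected weighted graph has positive Cheeger constant.

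I expect the main obstacle to be the \emph{uniformity} required in part (b): the argument hinges on controlling all Riemann--Lebesgue correction terms simultaneously by one compact $Q'$, and this is exactly where the finiteness of $G(h)$ cannot be dispensed with. For infinitely many weights no such uniform $Q'$ need exist, and indeed part (a) shows that in that regime the (kernel) connectivity does degrade to zero --- which is precisely the contrast the proposition is meant to record. A secondary point needing care is verifying in part (a) that the localization proof of Proposition~\ref{prop: kernel Cheeger of G+T_zH conv to zero} invokes only $A$-translation, so that the absence of any group structure on the discrete index set $\Lambda$ causes no difficulty.
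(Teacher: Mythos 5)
Your proposal is correct. For part (a) you take essentially the paper's route: the paper omits this part as ``analogous'' to Proposition~\ref{prop: kernel Cheeger of G+T_zH conv to zero}, and your verification that the adaptation needs only translation-invariance of $k_\Psi$ in the $A$-coordinate (so that $K_\Psi$ commutes with $T_x^{(A)}$), concentration on sets of the form $Q_0\times\Lambda$, and vanishing overlap, is exactly the content of that omitted adaptation. For part (b) you genuinely diverge from the paper, which stays on the time side: it chooses $Q'$ so that $2\,|\ip{h*\psi_\lambda*\psi_{\lambda'},\,T_x(h*\psi_\lambda*\psi_{\lambda'})}|\le \norm{h*\psi_\lambda*\psi_{\lambda'}}_2^2$ for the finitely many edges, deduces the one-sided bounds $w_{\lambda,\lambda'}(h+T_xh)\ge w_{\lambda,\lambda'}(h)$ (edges from below) and $w_\lambda(h+T_xh)\le 4\,w_\lambda(h)$ (vertices from above, valid for every $x$), and divides. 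Your Fourier-side argument via $|\widehat{h+T_xh}|^2=(2+2\re\gamma_x)|\widehat h|^2$ rests on the same decay mechanism --- indeed $\ip{g,T_xg}=\int |\widehat g|^2\,\overline{\gamma_x}\dxi$, so the paper's overlap estimate \emph{is} Riemann--Lebesgue in disguise (note also that the $L^1$ membership of your correction kernels uses $|\widehat\psi_\lambda|\le 1$ a.e., which is supplied by the Calder\'on condition \textup{(C)} standing in Section~\ref{sec: Cheeger const semi-discrete}). What your version buys: two-sided comparability of \emph{all} weights, hence the explicit conclusion that $G(h+T_xh)$ has the same vertex and edge sets as $G(h)$ for $x\notin Q'$ --- a point the paper's displayed computation implicitly assumes (its infimum ranges over subsets of $V(G(h))$) but never checks, and which is not automatic since exact cancellation $w_\lambda(h+T_xh)=0$ can occur for special $x$ in general LCA groups; moreover, since $\tfrac{1-\theta}{1+\theta}\to 1$ as $\theta\to 0$, your method improves the factor $\tfrac14$ to any constant $<1$ at the price of enlarging $Q'$. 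The paper's version is shorter and needs decay only for the edge overlaps. One caveat common to both: the strict positivity $\Cheeger_{G(h)}>0$ requires $G(h)$ to be \emph{connected} (the situation of Example~\ref{ex: Cheeger of f+T_x f}); the hypothesis as stated in part (b), finiteness with at least two vertices, does not give this, and both your argument and the paper's proof in fact invoke connectedness at that step.
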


\begin{proof}
	The proof of part~(a) is analogous to that of Proposition~\ref{prop: kernel Cheeger of G+T_zH conv to zero} and is therefore omitted.
	
	We now prove part~(b).
	Let $E$ and $V$ denote the edge and vertex sets of $G(h)$, respectively.
	Recall that for any $f,g\in L^2(A)$ and any $\varepsilon>0$ there exists a compact set $Q^{\prime\prime}\subset A$ such that for every $x\in A\setminus Q^{\prime\prime}$,
	\[
	\bigl|\ip{f,T_xg}\bigr|\leq \varepsilon.
	\]
	Since translation commutes with convolution and $G(h)$ is finite, we can choose a compact set $Q^\prime\subset A$ such that for every $x\in A\setminus Q^\prime$ and every edge $(\lambda,\lambda')\in E$,
	\[
	2\,\bigl|\ip{h*\psi_\lambda*\psi_{\lambda'},(T_xh)*\psi_\lambda*\psi_{\lambda'}}\bigr|\leq \norm{h*\psi_\lambda*\psi_{\lambda'}}_2^2.
	\]
	Thus,
	\begin{align*}
		\norm{(h+T_xh)*\psi_\lambda*\psi_{\lambda'}}_2^2
		&= \norm{h*\psi_\lambda*\psi_{\lambda'}}_2^2 + \norm{(T_xh)*\psi_\lambda*\psi_{\lambda'}}_2^2 \\
		&\qquad + 2\,\re\ip{h*\psi_\lambda*\psi_{\lambda'},(T_xh)*\psi_\lambda*\psi_{\lambda'}} \\
		&\geq 2\norm{h*\psi_\lambda*\psi_{\lambda'}}_2^2 - 2 \bigl|\ip{h*\psi_\lambda*\psi_{\lambda'},(T_xh)*\psi_\lambda*\psi_{\lambda'}}\bigr| \\
		&\geq \norm{h*\psi_\lambda*\psi_{\lambda'}}_2^2.
	\end{align*}
	On the other hand, for any $\lambda\in V$ we have
	\[
	\norm{(h+T_xh)*\psi_\lambda}_2^2
	\leq 2\bigl(\norm{h*\psi_\lambda}_2^2+\norm{(T_xh)*\psi_\lambda}_2^2\bigr)
	= 4\norm{h*\psi_\lambda}_2^2.
	\]
	Combining these two estimates yields
	\begin{align*}
		\Cheeger_{G(h+T_xh)}
		&= \inf_{\emptyset \neq S \subsetneq V} 
		\frac{\sum_{(\lambda,\lambda') \in \partial S} \norm{(h+T_xh)*\psi_\lambda*\psi_{\lambda'}}_2^2}{\left(\sum_{\lambda \in S} \norm{(h+T_xh)*\psi_\lambda}_2^2\right) \wedge \left(\sum_{\lambda \in S^c} \norm{(h+T_xh)*\psi_\lambda}_2^2 \right)} \\
		&\geq \inf_{\emptyset \neq S \subsetneq V} 
		\frac{\sum_{(\lambda,\lambda') \in \partial S} \norm{h*\psi_\lambda*\psi_{\lambda'}}_2^2}{4\,\left(\bigl(\sum_{\lambda \in S} \norm{h*\psi_\lambda}_2^2\bigr) \wedge \bigl(\sum_{\lambda \in S^c} \norm{h*\psi_\lambda}_2^2\bigr)\right)} \\
		&= \frac{\Cheeger_{G(h)}}{4}.
	\end{align*}
	Since $G(h)$ is finite and connected, we have $\Cheeger_{G(h)}>0$. 
	This completes the proof.
\end{proof}

\section*{Acknowledgement}

The authors acknowledge funding by the Deutsche Forschungsgemeinschaft (DFG, German Research Foundation)---Project number 442047500 through the Collaborative Research Center ``Sparsity and Singular Structures'' (SFB 1481).

	\printbibliography

@article{mallat2015phase,
	author = {Mallat, Stéphane and Waldspurger, Irène},
	doi = {10.1007/s00041-015-9403-4},
	fjournal = {The Journal of Fourier Analysis and Applications},
	issn = {1069-5869},
	journal = {J. Fourier Anal. Appl.},
	mrclass = {94A12 (42C40)},
	mrnumber = {3421917},
	mrreviewer = {Hartmut Führ},
	number = {6},
	pages = {1251--1309},
	title = {Phase retrieval for the {C}auchy wavelet transform},
	url = {https://doi.org/10.1007/s00041-015-9403-4},
	volume = {21},
	year = {2015}
}

@article{bandeira2014saving,
	author = {Bandeira, Afonso S. and Cahill, Jameson and Mixon, Dustin G. and Nelson, Aaron A.},
	doi = {10.1016/j.acha.2013.10.002},
	fjournal = {Applied and Computational Harmonic Analysis. Time-Frequency and Time-Scale Analysis, Wavelets, Numerical Algorithms, and Applications},
	issn = {1063-5203},
	journal = {Appl. Comput. Harmon. Anal.},
	mrclass = {42C15 (65T99 81P15 94A12)},
	mrnumber = {3202304},
	mrreviewer = {Juan Miguel Medina},
	number = {1},
	pages = {106--125},
	title = {Saving phase: injectivity and stability for phase retrieval},
	url = {https://doi.org/10.1016/j.acha.2013.10.002},
	volume = {37},
	year = {2014}
}

@article{akutowicz1957determination,
	author = {Akutowicz, Edwin J.},
	doi = {10.2307/2033718},
	fjournal = {Proceedings of the American Mathematical Society},
	issn = {0002-9939},
	journal = {Proc. Amer. Math. Soc.},
	mrclass = {44.0X},
	mrnumber = {84639},
	mrreviewer = {R. P. Boas},
	pages = {234--238},
	title = {On the determination of the phase of a {F}ourier integral. {II}},
	url = {https://doi.org/10.2307/2033718},
	volume = {8},
	year = {1957}
}

@article{alaifari2017PRcont,
	author = {Alaifari, Rima and Grohs, Philipp},
	doi = {10.1137/16M1071481},
	fjournal = {SIAM Journal on Mathematical Analysis},
	issn = {0036-1410},
	journal = {SIAM J. Math. Anal.},
	mrclass = {42C15 (46B45 49N45 94A12)},
	mrnumber = {3656501},
	mrreviewer = {Kazaros Kazarian},
	number = {3},
	pages = {1895--1911},
	title = {Phase retrieval in the general setting of continuous frames for {B}anach spaces},
	url = {https://doi.org/10.1137/16M1071481},
	volume = {49},
	year = {2017}
}

@article{alaifari2017reconstructing,
	author = {Alaifari, Rima and Daubechies, Ingrid and Grohs, Philipp and Thakur, Gaurav},
	doi = {10.1007/s00041-016-9513-7},
	fjournal = {The Journal of Fourier Analysis and Applications},
	issn = {1069-5869},
	journal = {J. Fourier Anal. Appl.},
	mrclass = {42C15 (49N45 94A12 94A20)},
	mrnumber = {3735589},
	mrreviewer = {Marcin M. Bownik},
	number = {6},
	pages = {1480--1494},
	title = {Reconstructing real-valued functions from unsigned coefficients with respect to wavelet and other frames},
	url = {https://doi.org/10.1007/s00041-016-9513-7},
	volume = {23},
	year = {2017}
}

@article{alaifari2019stable,
	author = {Alaifari, Rima and Daubechies, Ingrid and Grohs, Philipp and Yin, Rujie},
	doi = {10.1007/s10208-018-9399-7},
	fjournal = {Foundations of Computational Mathematics. The Journal of the Society for the Foundations of Computational Mathematics},
	issn = {1615-3375},
	journal = {Found. Comput. Math.},
	mrclass = {94A12 (30H10 42C40 78Axx)},
	mrnumber = {3989716},
	mrreviewer = {Kazaros Kazarian},
	number = {4},
	pages = {869--900},
	title = {Stable phase retrieval in infinite dimensions},
	url = {https://doi.org/10.1007/s10208-018-9399-7},
	volume = {19},
	year = {2019}
}

@article{alaifari2021gabor,
	author = {Alaifari, Rima and Grohs, Philipp},
	doi = {10.1016/j.acha.2019.09.003},
	fjournal = {Applied and Computational Harmonic Analysis. Time-Frequency and Time-Scale Analysis, Wavelets, Numerical Algorithms, and Applications},
	issn = {1063-5203},
	journal = {Appl. Comput. Harmon. Anal.},
	mrclass = {94A12 (42C15)},
	mrnumber = {4162323},
	pages = {401--419},
	title = {Gabor phase retrieval is severely ill-posed},
	url = {https://doi.org/10.1016/j.acha.2019.09.003},
	volume = {50},
	year = {2021}
}

@article{fuehr2010generalized,
	author = {Führ, Hartmut},
	doi = {10.4064/cm120-1-8},
	fjournal = {Colloquium Mathematicum},
	issn = {0010-1354},
	journal = {Colloq. Math.},
	mrclass = {42C40 (22D10 28D15)},
	mrnumber = {2652610},
	mrreviewer = {Robert S. Doran},
	number = {1},
	pages = {103--126},
	title = {Generalized {C}alderón conditions and regular orbit spaces},
	url = {https://doi.org/10.4064/cm120-1-8},
	volume = {120},
	year = {2010}
}

@book{fuehr2005abstract,
	author = {Führ, Hartmut},
	doi = {10.1007/b104912},
	isbn = {3-540-24259-7},
	mrclass = {43A30 (42C40 43A80)},
	mrnumber = {2130226},
	mrreviewer = {Margit Rösler},
	pages = {x+193},
	publisher = {Springer-Verlag, Berlin},
	series = {Lecture Notes in Mathematics},
	title = {Abstract harmonic analysis of continuous wavelet transforms},
	url = {https://doi.org/10.1007/b104912},
	volume = {1863},
	year = {2005}
}

@article{grohs2020PR,
	author = {Grohs, Philipp and Koppensteiner, Sarah and Rathmair, Martin},
	doi = {10.1137/19M1256865},
	fjournal = {SIAM Review},
	issn = {0036-1445},
	journal = {SIAM Rev.},
	mrclass = {42B10 (42C15 46B15 94A12)},
	mrnumber = {4094471},
	number = {2},
	pages = {301--350},
	title = {Phase retrieval: uniqueness and stability},
	url = {https://doi.org/10.1137/19M1256865},
	volume = {62},
	year = {2020}
}

@article{grohs2019spectral,
	author = {Grohs, Philipp and Rathmair, Martin},
	doi = {10.1002/cpa.21799},
	fjournal = {Communications on Pure and Applied Mathematics},
	issn = {0010-3640},
	journal = {Comm. Pure Appl. Math.},
	mrclass = {44A15 (42A38 45M10)},
	mrnumber = {3935477},
	mrreviewer = {Rajakumar Roopkumar},
	number = {5},
	pages = {981--1043},
	title = {Stable {G}abor phase retrieval and spectral clustering},
	url = {https://doi.org/10.1002/cpa.21799},
	volume = {72},
	year = {2019}
}

@article{alaifari2023phase,
	author = {Alaifari, Rima and Bartolucci, Francesca and Wellershoff, Matthias},
	journal = {Applied and Computational Harmonic Analysis},
	pages = {102--117},
	publisher = {Elsevier},
	title = {Phase retrieval of bandlimited functions for the wavelet transform},
	volume = {64},
	year = {2023}
}

@article{cahill2016PR,
	author = {Cahill, Jameson and Casazza, Peter G. and Daubechies, Ingrid},
	doi = {10.1090/btran/12},
	fjournal = {Transactions of the American Mathematical Society. Series B},
	journal = {Trans. Amer. Math. Soc. Ser. B},
	mrclass = {46C05 (94A15)},
	mrnumber = {3554699},
	mrreviewer = {Peter R. Massopust},
	pages = {63--76},
	title = {Phase retrieval in infinite-dimensional {H}ilbert spaces},
	url = {https://doi.org/10.1090/btran/12},
	volume = {3},
	year = {2016}
}

@article{jaming1999phase,
	author = {Jaming, Philippe},
	doi = {10.1007/BF01259373},
	fjournal = {The Journal of Fourier Analysis and Applications},
	issn = {1069-5869},
	journal = {J. Fourier Anal. Appl.},
	mrclass = {94A12 (42B10 81S30)},
	mrnumber = {1700086},
	mrreviewer = {A. Bultheel},
	number = {4},
	pages = {309--329},
	title = {Phase retrieval techniques for radar ambiguity problems},
	url = {https://doi.org/10.1007/BF01259373},
	volume = {5},
	year = {1999}
}

@article{chung1996laplacians,
	author = {Chung, Fan RK},
	journal = {Combinatorics, Paul Erdos is Eighty},
	number = {157-172},
	pages = {13--2},
	title = {Laplacians of graphs and Cheeger{\rq}s inequalities},
	volume = {2},
	year = {1996}
}

@incollection{cheeger2015lower,
	author = {Cheeger, Jeff},
	booktitle = {Problems in analysis},
	pages = {195--200},
	publisher = {Princeton University Press},
	title = {A lower bound for the smallest eigenvalue of the Laplacian},
	year = {2015}
}

@phdthesis{bartusel2024role,
	author = {Bartusel, David},
	school = {Dissertation, RWTH Aachen University, 2024},
	title = {The role of connectedness in phase retrieval},
	year = {2024}
}

@article{balan2015invertibility,
	author = {Balan, Radu and Wang, Yang},
	doi = {10.1016/j.acha.2014.07.003},
	fjournal = {Applied and Computational Harmonic Analysis. Time-Frequency and Time-Scale Analysis, Wavelets, Numerical Algorithms, and Applications},
	issn = {1063-5203,1096-603X},
	journal = {Appl. Comput. Harmon. Anal.},
	mrclass = {42C40 (94A12)},
	mrnumber = {3323113},
	mrreviewer = {Somantika\ Datta},
	number = {3},
	pages = {469--488},
	title = {Invertibility and robustness of phaseless reconstruction},
	url = {https://doi.org/10.1016/j.acha.2014.07.003},
	volume = {38},
	year = {2015}
}

@article{balan2016lipschitz,
	author = {Balan, Radu and Zou, Dongmian},
	doi = {10.1016/j.laa.2015.12.029},
	fjournal = {Linear Algebra and its Applications},
	issn = {0024-3795,1873-1856},
	journal = {Linear Algebra Appl.},
	mrclass = {42C15 (15A29 26A16 65H10)},
	mrnumber = {3464067},
	mrreviewer = {Alberto\ Portal},
	pages = {152--181},
	title = {On {L}ipschitz analysis and {L}ipschitz synthesis for the phase retrieval problem},
	url = {https://doi.org/10.1016/j.laa.2015.12.029},
	volume = {496},
	year = {2016}
}

@article{freeman2024stable,
	author = {Freeman, Dan and Oikhberg, Timur and Pineau, Ben and Taylor, Mitchell A},
	journal = {Mathematische Annalen},
	number = {1},
	pages = {1--43},
	publisher = {Springer},
	title = {Stable phase retrieval in function spaces},
	volume = {390},
	year = {2024}
}

@inproceedings{pauli1990allgemeinen,
	author = {Pauli, Wolfgang and Straumann, Norbert},
	pages = {21--192},
	publisher = {Springer},
	title = {Die allgemeinen Prinzipien der Wellenmechanik}
}

@book{rabiner1993fundamentals,
	author = {Rabiner, Lawrence and Juang, Biing-Hwang},
	publisher = {Prentice-Hall, Inc.},
	title = {Fundamentals of speech recognition},
	year = {1993}
}

@article{miao1999extending,
	author = {Miao, Jianwei and Charalambous, Pambos and Kirz, Janos and Sayre, David},
	journal = {Nature},
	number = {6742},
	pages = {342--344},
	publisher = {Nature Publishing Group UK London},
	title = {Extending the methodology of X-ray crystallography to allow imaging of micrometre-sized non-crystalline specimens},
	volume = {400},
	year = {1999}
}

@inproceedings{balan2013stability,
	author = {Balan, Radu},
	booktitle = {Wavelets and Sparsity XV},
	organization = {SPIE},
	pages = {107--116},
	title = {Stability of phase retrievable frames},
	volume = {8858},
	year = {2013}
}

@article{grohs2021stable,
	author = {Grohs, Philipp and Rathmair, Martin},
	journal = {Journal of the European Mathematical Society},
	number = {5},
	pages = {1593--1615},
	title = {Stable Gabor phase retrieval for multivariate functions},
	volume = {24},
	year = {2021}
}

@article{cheng2021stable,
	author = {Cheng, Cheng and Daubechies, Ingrid and Dym, Nadav and Lu, Jianfeng},
	journal = {Applied and Computational Harmonic Analysis},
	pages = {440--465},
	publisher = {Elsevier},
	title = {Stable phase retrieval from locally stable and conditionally connected measurements},
	volume = {55},
	year = {2021}
}

@article{rathmair2024stable,
	author = {Rathmair, Martin},
	journal = {International Mathematics Research Notices},
	number = {22},
	pages = {14094--14114},
	publisher = {Oxford University Press},
	title = {Stable STFT Phase Retrieval and Poincaré Inequalities},
	volume = {2024},
	year = {2024}
}

@article{garcia2025existence,
	author = {García-Sánchez, Enrique and de Hevia, David and Taylor, Mitchell},
	journal = {arXiv preprint arXiv:2512.08114},
	title = {On the existence of large subspaces of $ C (K) $ that perform stable phase retrieval},
	year = {2025}
}

@book{paulsen2016introduction,
	author = {Paulsen, Vern I and Raghupathi, Mrinal},
	publisher = {Cambridge university press},
	title = {An introduction to the theory of reproducing kernel Hilbert spaces},
	volume = {152},
	year = {2016}
}

@article{bruna2013invariant,
	author = {Bruna, Joan and Mallat, Stéphane},
	journal = {IEEE transactions on pattern analysis and machine intelligence},
	number = {8},
	pages = {1872--1886},
	publisher = {IEEE},
	title = {Invariant scattering convolution networks},
	volume = {35},
	year = {2013}
}

@article{mallat2012group,
	author = {Mallat, Stéphane},
	journal = {Communications on Pure and Applied Mathematics},
	number = {10},
	pages = {1331--1398},
	publisher = {Wiley Online Library},
	title = {Group invariant scattering},
	volume = {65},
	year = {2012}
}

@article{hirn2017wavelet,
	author = {Hirn, Matthew and Mallat, Stéphane and Poilvert, Nicolas},
	journal = {Multiscale Modeling \& Simulation},
	number = {2},
	pages = {827--863},
	publisher = {SIAM},
	title = {Wavelet scattering regression of quantum chemical energies},
	volume = {15},
	year = {2017}
}

@article{anden2014deep,
	author = {Andén, Joakim and Mallat, Stéphane},
	journal = {IEEE Transactions on Signal Processing},
	number = {16},
	pages = {4114--4128},
	publisher = {IEEE},
	title = {Deep scattering spectrum},
	volume = {62},
	year = {2014}
}

@inproceedings{angles2018generative,
	author = {Angles, Tomás and Mallat, Stéphane},
	booktitle = {International Conference on Learning Representations},
	title = {Generative networks as inverse problems with Scattering transforms},
	year = {2018}
}

@article{grohs2021L2stability,
	author = {Grohs, Philipp and Rathmair, Martin},
	journal = {arXiv preprint arXiv:2108.06154},
	title = {{\ensuremath{\mathrm{L}^2}}-stability analysis for Gabor phase retrieval},
	year = {2021}
}

@article{mallat2016understanding,
	author = {Mallat, Stéphane},
	journal = {Philosophical Transactions of the Royal Society A: Mathematical, Physical and Engineering Sciences},
	number = {2065},
	pages = {20150203},
	publisher = {The Royal Society Publishing},
	title = {Understanding deep convolutional networks},
	volume = {374},
	year = {2016}
}

@article{waldspurger2017wavelet,
	author = {Waldspurger, Irène},
	journal = {Journées équations aux dérivées partielles},
	pages = {1--10},
	title = {Wavelet transform modulus: phase retrieval and scattering},
	year = {2017}
}

@article{alaifari2024unique,
	author = {Alaifari, Rima and Bartolucci, Francesca and Wellershoff, Matthias},
	journal = {Proceedings of the American Mathematical Society, Series B},
	number = {30},
	pages = {330--344},
	title = {Unique wavelet sign retrieval from samples without bandlimiting},
	volume = {11},
	year = {2024}
}

@article{alaifari2025cheeger,
	author = {Alaifari, Rima and Pineau, Ben and Taylor, Mitchell A and Wellershoff, Matthias},
	journal = {arXiv preprint arXiv:2512.18058},
	title = {Cheeger's Constant for the Gabor Transform and Ripples},
	year = {2025}
}
	
\end{document}